\newtheorem{thm}{Theorem}[section]
\newtheorem{lemma}[thm]{Lemma}
\theoremstyle{definition}
\newtheorem{remark}[thm]{Remark}
\def\XXint#1#2#3{{\setbox0=\hbox{$#1{#2#3}{\int}$}
         \vcenter{\hbox{$#2#3$}}\kern-.5\wd0}}
\def\e{\varepsilon}
\numberwithin{equation}{section}
\begin{document}

\title{Homogenization of Parabolic Equations \\ with  Non-self-similar Scales}

\author{Jun Geng\thanks{Supported in part  by the NNSF of China (no. 11571152) and Fundamental Research Funds for the Central
Universities (lzujbky-2017-161).} \qquad
Zhongwei Shen\thanks{Supported in part by NSF grant DMS-1600520.}}

\date{}

\maketitle

\begin{abstract}

This paper is concerned with quantitative homogenization of second-order parabolic systems with periodic
coefficients varying rapidly in space and time, in different scales.
We obtain large-scale interior and boundary Lipschitz estimates as well as interior $C^{1, \alpha}$ and $C^{2, \alpha}$ estimates
by utilizing higher-order correctors.
We also  investigate the problem of convergence rates for initial-boundary value problems.

\medskip

\noindent{\it MSC2010:} \  35B27, 35K40.

\noindent{\it Keywords:} homogenization; parabolic system; large-scale regularity; convergence rate.

\end{abstract}

\section{\bf Introduction}\label{section-1}

In this paper we shall be interested in quantitative homogenization of a parabolic operator
with periodic coefficients varying rapidly in space and time, in different scales.
More precisely, we consider the parabolic operator 
\begin{equation}\label{operator-0}
\partial_t +\mathcal{L}_\e
\end{equation}
in $\mathbb{R}^{d+1}$, where $\e>0$ and
\begin{equation}\label{operator-1}
\mathcal{L}_\e
=-\text{\rm div} \big( A(x/\e, t/\e^k)\nabla \big),
\end{equation}
with $0<k<\infty$.
We will assume that the coefficient matrix
$A=A(y,s)=\big(a_{ij}^{\alpha\beta} (y, s)\big)$, with $1\le i, j \le d$ and $1\le\alpha, \beta\le m$,
  is real, bounded measurable  and satisfies the ellipticity condition,
\begin{equation}\label{ellipticity}
\| A\|_\infty  \le \mu^{-1} \quad \text{ and }\quad
\mu |\xi|^2  \le  a^{\alpha\beta}_{ij}(y,s)\xi_i^\alpha
\xi^\beta_j
\end{equation}
 for any 
$\xi=(\xi_i^\alpha ) \in \mathbb{R}^{m\times d} \text{ and  a.e. }  (y,s)\in \mathbb{R}^{d+1}$,
where $\mu >0$ (the summation convention is used throughout).
We also assume that $A$ is 1-periodic in $(y, s)$; i.e.,  
\begin{equation}\label{periodicity}
A(y+z,s+t)=A(y,s)~~~\text{ for }(z,t)\in \mathbb{Z}^{d+1}\text{ and a.e. }(y,s)\in \mathbb{R}^{d+1}.
\end{equation}

The qualitative homogenization theory for the operator (\ref{operator-0}) 
has been known since  the  1970s (see e.g. \cite{BLP-2011}).
As $\e\to 0$, the weak solution  $u_\e$ of the initial-Dirichlet problem 
for the parabolic system $(\partial_t + \mathcal{L}_\e )u_\e=F$ in $\Omega_T =
\Omega \times (0, T)$
converges weakly in $L^2(0, T; H^1(\Omega))$
and strongly in $L^2(\Omega_T)$.
Moreover, the limit $u_0$ is the solution of the initial-Dirichlet problem for
$(\partial_t  +\mathcal{L}_0)  u_0=F$  in $\Omega_T$,
where $\mathcal{L}_0$ is a second-order elliptic operator  with constant coefficients.
Furthermore, the (homogenized) coefficients of $\mathcal{L}_0$ as well as  the first-order correctors depend on $k$,
but only for three separated cases: $0<k<2$; $k=2$; and $2<k<\infty$. 
For more recent work on multiscale convergence and reiterated homogenization,
see \cite{Allaire-1996, Holmbom, Floden-2006, Pastukhova-2009, Woukeng-2010} and references therein. 

In recent years there  is  a great amount of interest in the quantitative homogenization theory
for partial differential equations, where
one is concerned with problems  related to the large-scale  regularity and  convergence rates for solutions $u_\e$.
Major progress has been made for elliptic equations and systems in the periodic and non-periodic settings 
(see \cite{AL-1987, KLS-2013-N, Suslina-2013, Gloria-2015, Armstrong-2016, AM-2016, AKM-2016, Otto-2016,
Gloria-2017, Armstrong-2017, A-Shen-2016, Shen-book, Armstrong-book} and  references therein).
Some of these work has been extended to parabolic equations and systems in the self-similar  case $k=2$.
In particular, we established the  large-scale Lipschitz and $W^{1, p}$ estimates in \cite{Geng-Shen-2015} 
and studied the problem of  convergence rates  in $L^2(\Omega_T)$
as well as error estimates for two-scale expansions  in $L^2(0, T; H^1(\Omega))$  in \cite{Geng-Shen-2017}.
Also see related work in \cite{Suslina-M-2015, Xu-Zhou, Niu-Xu-2018, Niu-Xu-2019}.
Most recently,  in \cite{Geng-Shen-2018}
we study the asymptotic behavior of the  fundamental solution and its derivatives and
establish sharp estimates for the remainders.
We refer the reader to 
\cite{Armstrong-2018} for quantitative stochastic homogenization of parabolic equations.

If $k\neq 2$,  the $\e$ scaling in the coefficient matrix  $A(x/\e, t/\e^k)$  is not consistent with the intrinsic  scaling of 
the second-order parabolic equations.
To the authors' best knowledge,
very few quantitative results are known in this case, where
direct extensions of the techniques developed for elliptic equations fail.

In this paper we develop a new approach to  study homogenization of
parabolic equations and systems  with non-self-similar scales.
This allows us to establish large-scale interior and boundary 
Lipschitz estimates for  the parabolic operator (\ref{operator-0})
with any $0<k<\infty$, under conditions (\ref{ellipticity}) and (\ref{periodicity}).

Let $Q_r (x_0, t_0)=B(x_0, r) \times (t_0-r^2, t_0)$ denote a parabolic cylinder.
The following is one of the main results of the paper.

\begin{thm}\label{m-theorem-1}
Assume $A=A(y, s)$ satisfies (\ref{ellipticity}) and (\ref{periodicity}).
Let $u_\e$ be a weak solution to 
\begin{equation}
(\partial_t +\mathcal{L}_\e )u_\e =F \quad \text{ in } Q_R=Q_R (x_0, t_0),
\end{equation}
where $R>\e+\e^{k/2}$ and  $F\in L^p(Q_R)$ for some $p> d+2$.
Then for any $\e +\e^{k/2}\le r<R$,
\begin{equation}\label{L-estimate-1}
\left(\fint_{Q_r} |\nabla u_\e|^2\right)^{1/2}
\le C \left\{ \left(\fint_{Q_R} |\nabla u_\e|^2 \right)^{1/2} 
+R \left(\fint_{Q_R} |F|^p \right)^{1/p} \right\},
\end{equation}
where $C$ depends only on $d$, $m$, $p$, and $\mu$.
\end{thm}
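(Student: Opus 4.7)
The plan is to prove Theorem~\ref{m-theorem-1} by a Campanato-type iteration, in which at each dyadic scale $u_\varepsilon$ is approximated by an affine function modulo a controlled excess. The three ingredients are (i) an approximation of $u_\varepsilon$ by a solution of the constant-coefficient homogenized system at scales above the microscopic threshold $r_0 := \varepsilon + \varepsilon^{k/2}$; (ii) the classical $C^{1,\alpha}$ estimate for $\partial_t + \mathcal{L}_0$; and (iii) a geometric iteration of the resulting one-step excess decay from $r = R$ down to $r = r_0$.

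The heart of the argument is the approximation step. For each $r \in [r_0, R]$, I would solve
\[
(\partial_t + \mathcal{L}_0) v = F \ \ \text{in } Q_r, \qquad v = u_\varepsilon \ \ \text{on the parabolic boundary of } Q_r,
\]
and prove
\[
\Big(\fint_{Q_r} |u_\varepsilon - v|^2\Big)^{1/2}
\le C\, \eta(r_0/r) \Big\{ r \big(\fint_{Q_{2r}} |\nabla u_\varepsilon|^2 \big)^{1/2}
+ r^2 \big(\fint_{Q_{2r}} |F|^p \big)^{1/p} \Big\},
\]
with a modulus $\eta(t) \to 0$ as $t \to 0^+$. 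This bound should be derived from a quantitative two-scale expansion $u_\varepsilon \approx v + \varepsilon\, \chi(x/\varepsilon, t/\varepsilon^k)\nabla v + \cdots$ using the first-order correctors $\chi$ on the torus $\mathbb{T}^{d+1}$, together with flux correctors and the higher-order correctors flagged in the abstract, which compensate the mismatch between the spatial scaling $\varepsilon$ and the temporal scaling $\varepsilon^k$. The three regimes $0<k<2$, $k=2$, and $2<k<\infty$ will almost certainly have to be handled separately, since the cell problems degenerate differently in each one and the dominant contribution to the error comes from different terms.

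Step (ii) is standard Schauder/Campanato regularity for the constant-coefficient operator $\partial_t + \mathcal{L}_0$: there exists $\theta \in (0, 1/4)$ such that any $v$ as above may be approximated in $Q_{\theta r}$ by an affine function $L(x)=c+q\cdot x$ with improved excess by a factor $1/2$, up to a data term involving $F$. Combining this with step (i) gives a contraction for
\[
H(r) := \inf_{c,q}\frac{1}{r}\Big(\fint_{Q_r} |u_\varepsilon - c - q\cdot x|^2\Big)^{1/2},
\]
of the form $H(\theta r) \le \tfrac{1}{2} H(r) + C\,\eta(r_0/r)\, D$, where $D$ denotes the data on the right-hand side of \eqref{L-estimate-1}. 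Iterating from $r = R$ down to $r = r_0$ and summing the geometric series (which converges provided $\eta$ decays at a Dini-type rate along dyadic scales) controls $H(r)$ uniformly for $r \ge r_0$; a standard Caccioppoli inequality applied to $u_\varepsilon - c - q\cdot x$ then converts this into the desired $L^2$ bound on $\nabla u_\varepsilon$.

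The main obstacle is the approximation lemma in step (i). For the elliptic or self-similar parabolic ($k=2$) case, first-order correctors alone give adequate decay in the two-scale expansion; when $k\neq 2$ they do not, and one must build higher-order correctors that precisely cancel the leading error terms produced by the scale mismatch between $x/\varepsilon$ and $t/\varepsilon^k$. Constructing these correctors on $\mathbb{T}^{d+1}$, proving the correct energy estimates uniformly in $k$, and showing that the resulting two-scale error is controlled by $\eta(r_0/r)$ all the way down to the microscopic scale $r_0 = \max(\varepsilon, \varepsilon^{k/2})$, is the technical core of the proof.
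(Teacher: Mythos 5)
Your macroscopic scheme (approximate by a constant-coefficient solution, run an excess-decay iteration on $H(r)$ against affine functions, then apply Caccioppoli) is sound and is essentially the scheme the paper uses for the boundary Lipschitz estimate in Section \ref{section-7}; measuring excess against plain affine functions at the $L^2$ level is fine. The gap is in your step (i), which you correctly identify as the technical core but for which your proposed route does not work under the hypotheses of the theorem. You propose to build the two-scale expansion from $1$-periodic correctors on $\mathbb{T}^{d+1}$ evaluated at $(x/\e, t/\e^k)$, supplemented by higher-order correctors compensating the mismatch between the spatial scale $\e$ and the temporal scale $\e^k$, with separate treatments of the three regimes of $k$. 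Two things go wrong. First, the compensating terms necessarily involve derivatives of $A$ (e.g.\ for $0<k<2$ the time derivative of $\e\,\chi^\infty(x/\e,t/\e^k)$ produces $\e^{1-k}\partial_s\chi^\infty$, whose control requires $\partial_s A$), whereas Theorem \ref{m-theorem-1} assumes only that $A$ is bounded and measurable; this is exactly why the paper defers such expansions, and the attendant hypotheses $\|\partial_s A\|_\infty<\infty$ or $\|\nabla^2 A\|_\infty<\infty$, to the convergence-rate results of Theorem \ref{m-theorem-2}. Second, even granting that regularity, the residual errors these corrections leave behind are of size $\e^{|2-k|}$ --- an absolute quantity independent of $r$ --- and not of the form $\eta(r_0/r)$ with $\eta(0)=0$, so the one-step defect does not sum over dyadic scales in the way your iteration requires.

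The paper's resolution is different in kind: it sets $\lambda=\e^{k-2}$, so that $\mathcal{L}_\e=\mathcal{L}_{\e,\lambda}$ with $A_\lambda(y,s)=A(y,s/\lambda)$ now $(1,\lambda)$-periodic and the operator $\partial_t+\mathcal{L}_{\e,\lambda}$ \emph{self-similarly} scaled in $\e$. The first-order correctors $\chi^\lambda$ solve the genuine parabolic cell problem (\ref{cell-lambda}) on $\mathbb{T}^d\times(0,\lambda)$ --- not the unit torus in time --- so no compensating higher-order correctors and no regularity of $A$ are needed, and no case distinction in $k$ arises. The only delicate point is that all estimates (energy bounds, Meyers exponents, the dual correctors $\phi^\lambda$ of Lemma \ref{lemma-3.1}, and the $\e$-smoothing operator $K_\e$ with $\delta=(1+\sqrt{\lambda})\e$) must be uniform in $\lambda$; with that, Theorem \ref{theorem-3.1} delivers an approximation error $C\big((1+\sqrt{\lambda})\e/r\big)^{\sigma}=C\big((\e+\e^{k/2})/r\big)^{\sigma}$, which is precisely the Dini modulus your iteration needs. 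If you want to salvage your write-up, replace your corrector ansatz by this reparametrized one; the rest of your argument then goes through.
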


The  inequality (\ref{L-estimate-1})  is a large-scale interior  Lipschitz estimate.
We also obtain large-scale $C^{1, \alpha}$ and $C^{2, \alpha}$ excess-decay estimates 
for solutions of $\partial_t +\mathcal{L}_\e$ (see Sections \ref{section-4} and \ref{section-5}).
Regarding the condition $R> r\ge \e+\e^{k/2}$,
we  point out  that there exists  $u_\e$ such that
$(\partial_t+\mathcal{L}_\e) u_\e =0$ in $\mathbb{R}^{d+1}$ and
$\nabla u_\e$ is $\e$-periodic in $x$ and $\e^k$-periodic in $t$ 
(the solution $u_\e$ is given by $x_j +\e \chi^\lambda_j (x/\e, t/\e^2)$
with $\lambda=\e^{k-2}$; see Section \ref{section-2}).
Note that if the periodic cell $(0, \e)^d \times (-\e^k, 0)$  for $\nabla u_\e$ is contained in
the parabolic cylinder $Q_r (0, 0)$, then $r^2\ge \e^k$ and $2r\ge \sqrt{d} \e$.
This implies that $r\ge (\e +\e^{k/2})/4$.
As a result, the condition $R> r\ge \e+\e^{k/2}$ for (\ref{L-estimate-1}) is more or less necessary.

The next theorem gives the large-scale boundary Lipschitz estimate.
Let $\Omega$ be a bounded $C^{1, \alpha}$ domain in $\mathbb{R}^d$ for some $\alpha>0$.
Define $D_r (x_0, t_0)=\big( B(x_0, r)\cap \Omega \big) \times (t_0-r^2, t_0)$ and
$\Delta_r (x_0, t_0)=\big( B(x_0, r)\cap \partial\Omega \big) \times (t_0-r^2, t_0)$,
where 
$x_0\in \partial\Omega$ and $t_0 \in \mathbb{R}$.

\begin{thm}\label{m-theorem-3}
Assume $A=A(y, s)$ satisfies (\ref{ellipticity}) and (\ref{periodicity}).
Suppose that 
$(\partial_t + \mathcal{L}_\e) u_\e =F$ in $D_R=D_R (x_0, t_0)$ and
$u_\e =f$ on $\Delta_R=D_R (x_0, t_0)$,
where $\e +\e^{k/2}<R\le 1$ and $F\in L^p (D_R)$ for some $p>d+2$.
Then for any $\e +\e^{k/2}  \le r< R$,
\begin{equation}\label{BL-estimate-1}
\left(\fint_{D_r} |\nabla u_\e|^2\right)^{1/2}
\le C \left\{ \left(\fint_{D_R} |\nabla u_\e|^2 \right)^{1/2} 
+R^{-1} \| f\|_{C^{1+\alpha}(\Delta_R)}
+R \left(\fint_{D_R} |F|^p \right)^{1/p} \right\},
\end{equation}
where $C$ depends only on $d$, $m$, $p$, $\mu$, and $\Omega$.
\end{thm}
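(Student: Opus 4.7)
The plan is to establish a boundary excess-decay inequality by coupling a quantitative homogenization approximation with the classical boundary $C^{1,\alpha}$ theory for the constant-coefficient limit operator $\partial_t+\mathcal{L}_0$, and then to iterate geometrically from scale $R$ down to the threshold $r\ge \e+\e^{k/2}$. This follows the Avellaneda--Lin / Shen framework used for the interior estimate in Theorem~\ref{m-theorem-1}, with the modification that the effective small parameter is $\delta:=(\e+\e^{k/2})/r$, capturing the spatial period $\e$ and the temporal period $\e^k$ simultaneously. After translating $(x_0,t_0)$ to the origin and straightening the boundary by a $C^{1,\alpha}$ change of variables (whose $C^\alpha$ non-periodic perturbation of the coefficients is absorbed in the iteration), I may assume $D_R$ is a flat half-cylinder; subtracting an affine function matching $f$ and $\nabla_{\tan}f$ at the origin further reduces to $f(0,0)=0$ and $\nabla_{\tan}f(0,0)=0$.

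The two analytic inputs are, first, a boundary homogenization rate: for each $r\in[\e+\e^{k/2},R]$, letting $v_r$ solve $(\partial_t+\mathcal{L}_0)v_r=F$ in $D_r$ with $v_r=u_\e$ on $\partial_p D_r$, the bound
\[
\Big(\fint_{D_r}|u_\e-v_r|^2\Big)^{1/2}\le C\,\delta^\sigma\, r\,\mathcal{J}(r),\qquad \mathcal{J}(r):=\Big(\fint_{D_r}|\nabla u_\e|^2\Big)^{1/2}+r^{-1}\|f\|_{C^{1+\alpha}(\Delta_r)}+r\Big(\fint_{D_r}|F|^p\Big)^{1/p},
\]
for some $\sigma=\sigma(k)>0$; and second, the $C^{1,\alpha}$ decay
\[
\inf_q\Big(\fint_{D_{\theta r}}|\nabla v_r-q|^2\Big)^{1/2}\le C\theta^\alpha \mathcal{J}(r),\qquad \theta\in(0,1/4),
\]
for the constant-coefficient limit, where $q$ ranges over matrices whose tangential part equals $\nabla_{\tan}f(0,0)=0$. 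The first estimate is where the higher-order space-time correctors developed earlier in the paper enter, together with an adapted boundary-layer correction enforcing the Dirichlet condition on $\Delta_r$; the second is classical.

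Defining the excess
\[
\Phi(r):=\inf_q\Big(\fint_{D_r}|\nabla u_\e-q|^2\Big)^{1/2}+r^{-1}\|f\|_{C^{1+\alpha}(\Delta_r)}+r\Big(\fint_{D_r}|F|^p\Big)^{1/p},
\]
the two inputs, together with a Caccioppoli inequality applied to $u_\e-v_r-q\cdot x$, convert the $L^2$-rate on $u_\e-v_r$ into a gradient rate and give
\[
\Phi(\theta r)\le \big(C_1\theta^\alpha+C_2\delta^\sigma\big)\Phi(r).
\]
Choosing $\theta$ so that $C_1\theta^\alpha\le \tfrac14$, and restricting to $r\ge c_0(\e+\e^{k/2})$ with $c_0$ large enough that $C_2\delta^\sigma\le\tfrac14$ (the resulting short missing range is covered by a rescaled Caccioppoli inequality), geometric iteration from $r=R$ down to any such $r$ yields $\Phi(r)\le C\Phi(R)$, which is exactly (\ref{BL-estimate-1}).

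The principal obstacle is the boundary homogenization rate in the second step. In the self-similar case $k=2$, the classical two-scale expansion $u_\e\approx v+\e\chi(x/\e,t/\e^2)\nabla v$ produces an $O(\e/r)$ error that can be treated in parallel with the elliptic theory. When $k\neq 2$, the spatial and temporal oscillations live on decoupled scales, and the first-order corrector alone leaves an $O(1)$ residual in $(\partial_t+\mathcal{L}_\e)(v+\e\chi\nabla v)-F$; one must bring in the higher-order correctors built earlier in the paper and balance the competing $O(\e)$ spatial and $O(\e^{k/2})$ temporal contributions in the energy estimate against the boundary-layer correction on $\Delta_r$. The remaining ingredients---Caccioppoli lifting, constant-coefficient $C^{1,\alpha}$ decay, and the iteration itself---are by now standard manipulations once Step~two is in hand.
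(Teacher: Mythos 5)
Your overall architecture (quantitative approximation by the constant-coefficient problem, boundary $C^{1,\alpha}$ decay for the limit, Campanato iteration down to scale $\e+\e^{k/2}$) matches the paper's, but the proposal leaves the decisive ingredient unproved and the iteration as written does not close. The paper never works directly with the non-self-similar operator: it sets $A_\lambda(y,s)=A(y,s/\lambda)$, proves the boundary approximation lemma (Lemma \ref{lemma-7.2}) for $\partial_t+\mathcal{L}_{\e,\lambda}$ using only the first-order correctors $\chi^\lambda$ and the dual correctors $\phi^\lambda$ from Lemma \ref{lemma-3.1}, with constants \emph{uniform in} $\lambda$, and then takes $\lambda=\e^{k-2}$ so that $(1+\sqrt{\lambda})\e=\e+\e^{k/2}$. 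Your "first analytic input" -- an $O(\delta^\sigma)$ rate for the operator with decoupled scales $\e$ and $\e^k$ -- is exactly the point at which direct extensions fail for $k\neq 2$, and your proposed remedy ("bring in the higher-order correctors") is not what the paper does: the second-order correctors $\chi^\lambda_{k\ell}$ of Section \ref{section-5} appear only in the $C^{2,\alpha}$ theory. As written, the central quantitative estimate is asserted rather than derived, and no mechanism uniform in the ratio of the two periods is supplied.

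Two further steps would fail as described. First, the Caccioppoli upgrade from $\|u_\e-v_r\|_{L^2(D_r)}$ to a gradient excess does not work: $u_\e-v_r$ satisfies $(\partial_t+\mathcal{L}_\e)(u_\e-v_r)=\mathrm{div}\big((A^\e-\widehat{A})\nabla v_r\big)$, so Caccioppoli returns an $O(1)$ term $\|\nabla v_r\|_{L^2}$, not a small one; closeness in $H^1$ holds only to the corrected expansion $v_r+\e\chi\nabla v_r$, which is why the paper runs the iteration on the $L^2$/trace quantity $\Psi(r;u)$ of (\ref{Psi}) and applies Caccioppoli once at the end. Second, your inequality $\Phi(\theta r)\le(C_1\theta^\alpha+C_2\delta^\sigma)\Phi(r)$ is too strong: the approximation error is controlled by $\delta^\sigma\mathcal{J}(r)$, and $\mathcal{J}(r)$ exceeds the excess $\Phi(r)$ by the size $|q_r|$ of the minimizing slope (note $q\cdot x$ is \emph{not} a solution of the $\e$-problem, so it cannot be subtracted scale by scale). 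The correct decay inequality has the form $H(\theta r)\le\frac12 H(r)+C\delta^\sigma\{H(2r)+h(2r)\}$ with $h(r)=|E_r|$, which is why the paper invokes the Dini-type iteration Lemma \ref{lemma-7.5} and separately bounds $|E_t-E_s|\le CH(2r)$ by telescoping. Finally, $\Phi(r)\le C\Phi(R)$ alone is not (\ref{BL-estimate-1}); one still needs the uniform bound on $|E_r|$ that this bookkeeping provides. The boundary straightening you propose is also avoidable and undesirable, since it destroys periodicity of the coefficients; the paper works directly in the $C^{1,\alpha}$ graph cylinders $T_r$.
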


In this paper we also investigate the rate of convergence in $L^2(\Omega_T)$ for the initial-Dirichlet problem,
\begin{equation}\label{DP-00}
(\partial_t +\mathcal{L}_\e )u_\e=F 
\quad \text{ in } \Omega_T \quad
\text{ and } \quad u_\e =f \quad \text{ on } \partial_p \Omega_T,
\end{equation}
where $\partial_p \Omega_T$ denotes the parabolic boundary of $\Omega_T$.

\begin{thm}\label{m-theorem-2}
Assume $A=A(y, s)$ satisfies (\ref{ellipticity}) and (\ref{periodicity}).
Also assume that $\|\partial_s A\|_\infty<\infty$ for $0<k<2$ and  $\|\nabla^2 A\|_\infty<\infty$
for $k>2$.
Let $u_\e$ be  the weak solution of (\ref{DP-00})  and $u_0$ the homogenized solution, where
$\Omega$ is a bounded $C^{1, 1}$ domain in $\mathbb{R}^d$ and $0<T<\infty$.
Then
\begin{equation}\label{rate-0}
\aligned
 & \| u_\e -u_0\|_{L^2(\Omega_T)}\\
 & 
\le  C \Big\{ \| u_0\|_{L^2(0, T; H^2(\Omega))}
+\|\partial_t u_0\|_{L^2(\Omega_T)} \Big\}
\cdot 
\left\{
\aligned
&  \e^{k/2} 
 &  \quad &  \text{if } \ 0<k\le 4/3,\\
&\e^{2-k} 
&  \quad &  \text{if } \ 4/3< k<  2,\\
& \e^{k-2} & \quad & \text{if } \ 2<k<3,\\
&  \e
& \quad &  \text{if } \ k=2 \text{ or} \ 3\le  k<\infty ,\\
\endaligned
\right.
\endaligned
\end{equation}
for any $0<\e<1$,
where $C$ depends only on $d$, $m$, $k$, $A$, $\Omega$, and $T$.
\end{thm}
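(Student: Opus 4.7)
The plan is to combine a two-scale asymptotic expansion with a duality argument in $L^2(\Omega_T)$, using the rescaled correctors $\chi^\lambda$ with $\lambda=\e^{k-2}$ from Section~\ref{section-2}. Introduce a smoothing operator $S_\delta$ (Steklov or convolution) at a scale $\delta=\delta(\e,k)$ to be tuned in each regime, and a cutoff $\eta$ that vanishes in a neighbourhood of $\partial_p \Omega_T$ of width comparable to $\max(\e,\e^{k/2})$. Working with the ansatz
\[
u_\e^{(1)}(x,t)=u_0(x,t)+\e\,\chi^\lambda\!\bigl(x/\e,\,t/\e^2\bigr)\,S_\delta\bigl(\eta\,\nabla u_0\bigr)(x,t),
\]
supplemented in the regime $2<k<3$ by a second-order corrector term $\e^2\chi^{\lambda,(2)}(x/\e,t/\e^2)\,S_\delta(\eta\,\nabla^2 u_0)$ built using $\|\nabla^2 A\|_\infty<\infty$, I would set $w_\e=u_\e-u_\e^{(1)}$ and study the equation satisfied by $w_\e$.

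Applying $\partial_t+\mathcal{L}_\e$ to $w_\e$ and using the corrector equation together with a flux corrector (constructed in Section~\ref{section-2}) that writes $A(y,s)(I+\nabla_y\chi^\lambda)-\widehat A$ as a divergence in $y$ modulo a small temporal remainder --- the latter controlled via $\|\partial_s A\|_\infty<\infty$ when $0<k<2$ --- the right-hand side splits into four families of error terms: (i) an $O(\e)$ flux-corrector piece paired with $\nabla S_\delta(\eta\,\nabla u_0)$; (ii) smoothing errors of the form $\nabla u_0-S_\delta(\eta\,\nabla u_0)$; (iii) a purely parabolic remainder $\e\,\chi^\lambda\,\partial_t S_\delta(\eta\,\nabla u_0)$, whose size reflects the mismatch between the $\e^2$ temporal scale of $\chi^\lambda$ and the actual $\e^k$ oscillation in time; and (iv) boundary-layer contributions supported where $\eta$ is not constant. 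The $L^2(\Omega_T)$ bound is then obtained by duality: given $G\in L^2(\Omega_T)$, solve the backward adjoint problem $(-\partial_t+\mathcal{L}_0^*)\phi=G$ with vanishing terminal and lateral data; the $C^{1,1}$ assumption on $\Omega$ yields $\|\phi\|_{L^2(0,T;H^2(\Omega))}+\|\partial_t\phi\|_{L^2(\Omega_T)}\le C\|G\|_{L^2(\Omega_T)}$, and integrating by parts in space-time converts (i)--(iv) into quantities bounded by the norms of $u_0$ on the right-hand side of \eqref{rate-0}, with the boundary layer absorbing a power of the layer width via a standard trace inequality applied to $\nabla\phi$.

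The main obstacle, and the reason for the four regimes in \eqref{rate-0}, is to balance the three natural scales --- the corrector size (a function of $\lambda=\e^{k-2}$), the smoothing scale $\delta$, and the boundary-layer width --- against one another and to choose $\delta$ and the order of the expansion optimally in each range of $k$. For $0<k<2$ the crucial input is the additional temporal regularity $\|\partial_s A\|_\infty<\infty$, which is used to trade the fast-time oscillation against the flux corrector and produces the crossover at $k=4/3$: below this value the boundary-layer contribution of size $\e^{k/2}$ dominates, while above it the interior flux/smoothing error $\e^{2-k}$ takes over. For $k>2$, the hypothesis $\|\nabla^2 A\|_\infty<\infty$ enables the second-order corrector above, and balancing the resulting interior error $\e^{k-2}$ against the universal $O(\e)$ boundary-layer ceiling produces the crossover at $k=3$. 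Executing this bookkeeping carefully, and in particular verifying that the time remainder (iii) is absorbed in every regime, is the technical heart of the argument.
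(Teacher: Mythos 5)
Your overall skeleton --- rescaled correctors $\chi^\lambda$ with $\lambda=\e^{k-2}$, a smoothed and cut-off two-scale expansion, dual (flux) correctors, and an $L^2$ duality argument exploiting $H^2$ regularity on the $C^{1,1}$ domain --- matches the paper's, and it does deliver the $O(\e+\e^{k/2})$ part of the estimate. But the mechanism you assign to the $\e^{2-k}$ and $\e^{k-2}$ terms is not the right one, and this is a genuine gap. The dual-corrector identity applies to $B_\lambda=A_\lambda+A_\lambda\nabla\chi^\lambda-\widehat{A_\lambda}$, whose cell average vanishes by the very definition of $\widehat{A_\lambda}$; it is exact (Lemma \ref{lemma-3.1}), with no ``small temporal remainder controlled via $\|\partial_s A\|_\infty$''. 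Since your ansatz is built on $u_0$, the solution for the limit matrix $\widehat{A_\infty}$ (resp.\ $\widehat{A_0}$), the error equation necessarily contains the additional term $\mathrm{div}\big((\widehat{A_\lambda}-\widehat{A_\infty})\nabla u_0\big)$, a constant-coefficient discrepancy that none of your four families (i)--(iv) accounts for. Controlling it requires the quantitative estimates $|\widehat{A_\lambda}-\widehat{A_\infty}|\le C\lambda^{-1}\|\partial_s A\|_\infty$ for $\lambda>1$ and $|\widehat{A_\lambda}-\widehat{A_0}|\le C\lambda\big\{\|\nabla^2 A\|_\infty+\|\nabla A\|_\infty^2\big\}$ for $\lambda<1$ (Theorems \ref{theorem-2.1} and \ref{theorem-2.2}). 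These are nontrivial lemmas in their own right --- an energy estimate for $\chi^\lambda(y,\lambda s)-\chi^\infty(y,s)$ combined with Meyers-type higher integrability in the first case, and control of $\partial_s\nabla\chi^\lambda$ obtained by differentiating the cell problem twice in $y$ in the second --- and they are precisely where the hypotheses $\|\partial_s A\|_\infty<\infty$ and $\|\nabla^2 A\|_\infty<\infty$ enter. With $\lambda=\e^{k-2}$ they give $\e^{2-k}$ and $\e^{k-2}$, which combined with the $O(\e+\e^{k/2})$ expansion error produce the four regimes and the crossovers at $k=4/3$ and $k=3$.

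Relatedly, the second-order corrector you introduce for $2<k<3$ is a wrong turn: in that range $\e^{k-2}$ is \emph{larger} than $\e$, so the issue is not to improve the order of the spatial expansion but to show that $\widehat{A_\lambda}$ is within $O(\e^{k-2})$ of $\widehat{A_0}$; a term of the form $\e^2\chi^{\lambda,(2)}(x/\e,t/\e^2)\nabla^2 u_0$ does not address this, and the paper uses no second-order correctors in the proof of Theorem \ref{m-theorem-2} (they appear only in Section \ref{section-5}, for the $C^{2,\alpha}$ estimates). The clean way to organize the argument, which decouples the two error sources, is to insert the intermediate solution $u_{0,\lambda}$ of the initial-Dirichlet problem with coefficient $\widehat{A_\lambda}$: Theorem \ref{theorem-L-2} gives $\| u_\e-u_{0,\lambda}\|_{L^2(\Omega_T)}\le C(\e+\e^{k/2})\{\cdots\}$ uniformly in $\lambda$, and $u_{0,\lambda}-u_0$ then solves a constant-coefficient problem with source $\mathrm{div}\big((\widehat{A_\lambda}-\widehat{A_\infty})\nabla u_0\big)$, which is handled by the matrix comparison above together with standard $H^2$ estimates on the $C^{1,1}$ domain.
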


We now describe our general approach to Theorems \ref{m-theorem-1}, \ref{m-theorem-3}, and \ref{m-theorem-2}.
The key insight is to introduce a new scale $\lambda\in (0, \infty)$ and consider the operator 
\begin{equation}\label{op-l}
\mathcal{L}_{\e, \lambda} =-\text{\rm div} \big( A_\lambda (x/\e, t/\e^2)\nabla  \big),
\end{equation}
where $A_\lambda (y, s) = A(y, s/\lambda)$.
Observe that the coefficient matrix $A_\lambda$ is 1-periodic in $y$ and $\lambda$-periodic in $s$.
Moreover, for each $\lambda$ fixed, the scaling of the parameter  $\e$
in $A_\lambda (x/\e, t/\e^2)$ is 
consistent with the intrinsic  scaling of the second-order parabolic operator $\partial_t +\mathcal{L}_{\e, \lambda}$.
As a result, we may extend some of recently developed techniques for 
elliptic equations to the parabolic equation $(\partial_t +\mathcal{L}_{\e, \lambda}) u_{\e, \lambda}  =F$, as in the case $k=2$.
We point out that for the results to be useful, it is crucial that
the bounding constants $C$ in the estimates  of solutions $u_{\e, \lambda}$ do not depend on $\lambda$ (and $\e$).
This allows to use the observation  $\mathcal{L}_\e = \mathcal{L}_{\e, \lambda}$ for $\lambda=\e^{k-2}$
and prove Theorems \ref{m-theorem-1} and \ref{m-theorem-3}.
The approach also  leads to large-scale $C^{1, \alpha}$ and $C^{2, \alpha}$ excess-decay estimates
as well as a Liouville property,
expressed in terms of correctors for $\partial_t +\mathcal{L}_{\e, \lambda}$.

The  approach described above works equally well for  the problem of convergence rates.
In addition to the observation $\mathcal{L}_{\e, \lambda}=\mathcal{L}_\e$ for $\lambda=\e^{k-2}$,
we also use the fact that as $\lambda\to \infty$,
the homogenized coefficient matrix $\widehat{A_\lambda}$  for  $\partial_t+\mathcal{L}_{\e, \lambda}$
converges to $\widehat{A_\infty}$, the homogenized coefficient matrix  for $\partial_t + \mathcal{L}_\e$ in the case
$0<k<2$.  If $\lambda\to 0$, then $\widehat{A_\lambda} \to \widehat{A_0}$,
the homogenized coefficient matrix for $\partial_t + \mathcal{L}_\e$ in the case $2<k<\infty$.

The paper is organized as follows.
In Section \ref{section-2} we introduce the first-order correctors $\chi^\lambda$ and homogenized coefficients 
for  $\partial_t +\mathcal{L}_{\e, \lambda}$, with $\lambda>0$ fixed,
as well as correctors and homogenized coefficients for $\mathcal{L}_\e$ in (\ref{operator-0}) with $0<k<\infty$.
We also establish estimates of  $|\widehat{A_\lambda} -\widehat{A_\infty}|$  for $\lambda>1$,  and of 
$|\widehat{A}_\lambda -\widehat{A_0}|$ for $0< \lambda<1$, under additional regularity assumptions on $A$.
These estimates are used in the proof of Theorem \ref{m-theorem-2}.
In Section \ref{section-3} we prove an approximation result for solutions of $(\partial_t +\mathcal{L}_{\e, \lambda}) u_{\e, \lambda}=F$
in a parabolic cylinder.
This is done by using $\e$-smoothing and dual correctors.
The proof follows  the approach used in \cite{Geng-Shen-2017} by the  present authors  for the case $\lambda=1$.
The proof of Theorem \ref{m-theorem-1} is given in Section \ref{section-4}, where we also
establish a large-scale  $C^{1, \alpha}$ estimate.
In Section \ref{section-5} we introduce  second-order correctors for  the operator $\partial_t +\mathcal{L}_{\e, \lambda}$ and
prove a large-scale $C^{2, \alpha}$ estimate. 
The large-scale boundary Lipschitz estimate in Theorem \ref{m-theorem-3}
is proved in Section \ref{section-7}.
We remark that the approaches used in Sections \ref{section-4}, \ref{section-5}, and \ref{section-7} are motivated 
by recently developed techniques for studying the  large-scale regularity in the homogenization theory 
for elliptic equations and systems \cite{Gloria-2015, Armstrong-2016, AM-2016, AKM-2016, Otto-2016,
Gloria-2017, Armstrong-2017, A-Shen-2016}.
Finally, we give the proof of Theorem \ref{m-theorem-2} in Section \ref{section-6},
where we also obtain error estimates for a two-scale expansion in $L^2(0, T; H^1(\Omega))$.

The summation convention is used throughout.
We will use $\fint_E u$ to denote the $L^1$ average of $u$ over the set $E$; i.e. 
$\fint_E u  =\frac{1}{|E|} \int_E u$.
For notational simplicity we will assume $m=1$ in the rest of the paper.
However, no particular fact pertain to the scalar case is ever used.
All results and proofs extend readily  to the case $m>1$ - the case of parabolic  systems.



\section{\bf Correctors and homogenized coefficients}\label{section-2}

Let $A=A(y, s)$ be a matrix satisfying conditions (\ref{ellipticity}) and (\ref{periodicity}).
For $\lambda>0$, define
\begin{equation}\label{A-Lambda}
A_\lambda =A_\lambda (y, s)=A(y, s/\lambda) \quad \text{ for } (y, s)\in \mathbb{R}^{d+1}.
\end{equation}
The matrix $A_\lambda$ is $(1, \lambda)$-periodic in $(y, s)$; i.e., 
$$
A_\lambda (y+z, s+\lambda t)=A_\lambda (y, s)\quad \text{ for } (z, t)\in \mathbb{Z}^{d+1}.
$$
Let $\chi^\lambda=\chi^\lambda (y, s)=(\chi_1^\lambda (y, s), \dots, \chi_d^\lambda (y, s))$, where
$\chi_j^\lambda=\chi_j^\lambda (y, s)$ is the weak solution of the parabolic cell problem:
\begin{equation}\label{cell-lambda}
\left\{
\aligned
 & \partial_s \chi_j^\lambda -\text{\rm div} \big( A_\lambda \nabla \chi_j^\lambda\big)
=\text{\rm div}\big( A_\lambda \nabla y_j\big) \quad \text{ in } \mathbb{R}^{d+1},\\
& \chi_j^\lambda \text{ is } (1, \lambda) \text{-periodic in } (y, s),\\
& \int_0^\lambda \!\!\!\int_{\mathbb{T}^d} \chi_j^\lambda  (y, s)\, dy ds=0,
\endaligned
\right.
\end{equation}
where $\mathbb{T}^d=[0, 1)^d=\mathbb{R}^d /\mathbb{Z}^d$.
By the energy estimates,
\begin{equation}\label{2.1}
\fint_0^\lambda  \!\!\! \int_{\mathbb{T}^d} |\nabla \chi_j^\lambda|^2\, dyds
\le C,
\end{equation}
where $C$ depends only on $d$ and $\mu$.
Since
$$
\partial_s \int_{\mathbb{T}^d} \chi_j^\lambda (y, s)\, dy=0,
$$
we obtain, by the integral  condition in (\ref{cell-lambda}),
\begin{equation}\label{2.2}
\int_{\mathbb{T}^d} \chi_j^\lambda (y, s) \, dy=0.
\end{equation}
This, together with (\ref{2.1}) and Poincar\'e's inequality, gives
\begin{equation}\label{2.3}
 \fint_0^\lambda \!\!\! \int_{\mathbb{T}^d} |\chi_j^\lambda|^2\, dyds
\le C,
\end{equation}
where $C$ depends only on $d$ and $\mu$.
Since $\chi^\lambda$ and $\nabla \chi^\lambda$ are $(1, \lambda)$-periodic in $(y, s)$,
it follows from (\ref{2.1}) and (\ref{2.3}) that if $r\ge 1+\sqrt{\lambda}$, 
\begin{equation}\label{Q-e-1}
\left(\fint_{Q_r}  \left(|\nabla \chi^\lambda|^2 +|\chi^\lambda|^2 \right) \right)^{1/2}
\le C
\end{equation}
for any $Q_r =Q_r (x, t)$, where $C$ depends only on $d$ and $\mu$.

Let
\begin{equation}\label{2.4}
\widehat{A_\lambda}
=\fint_0^\lambda \!\!\! \int_{\mathbb{T}^d}
\left( A_\lambda + A_\lambda \nabla \chi^\lambda \right) dyds.
\end{equation}

\begin{lemma}\label{lemma-2.1}
There exists $C>0$, depending only on $d$ and $\mu$, such that
$|\widehat{A_\lambda}|\le C$. Moreover,  
\begin{equation}\label{2.5}
\mu |\xi|^2 \le \xi \cdot \widehat{A_\lambda} \xi
\end{equation}
for any $\xi \in \mathbb{R}^d$.
\end{lemma}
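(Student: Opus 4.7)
I plan to establish the two assertions separately, with the upper bound being essentially immediate from the energy estimate and the lower bound requiring a more careful energy identity adapted to the parabolic cell problem.

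For the bound $|\widehat{A_\lambda}| \le C$, I would simply apply Cauchy--Schwarz to the definition (\ref{2.4}): the first term is controlled by $\|A\|_\infty \le \mu^{-1}$, while the second is bounded by
$$\|A_\lambda\|_\infty \left(\fint_0^\lambda \!\!\!\int_{\mathbb{T}^d} |\nabla \chi^\lambda|^2 \, dyds\right)^{1/2} \le C,$$
using (\ref{2.1}). This part is routine.

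For the ellipticity (\ref{2.5}), the key object to introduce is the scalar function $v(y,s) = \xi_j \chi^\lambda_j(y,s)$. Multiplying the cell equation (\ref{cell-lambda}) by $\xi_j$ and summing shows that $v$ satisfies $\partial_s v - \mathrm{div}(A_\lambda(\xi + \nabla v)) = 0$ in $\mathbb{R}^{d+1}$, with $v$ being $(1,\lambda)$-periodic. Testing this equation against $v$ itself over the cell $Y = \mathbb{T}^d \times [0,\lambda)$, the term $\int_Y v \partial_s v \, dyds = \tfrac{1}{2}\int_Y \partial_s(v^2) \, dyds$ vanishes by $\lambda$-periodicity in $s$, leaving the orthogonality identity
$$\int_Y A_\lambda(\xi + \nabla v) \cdot \nabla v \, dyds = 0.$$
This is the parabolic analogue of the standard elliptic cell-problem energy identity and is where the structure of the problem enters.

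Once this identity is in hand, I would rewrite $\xi \cdot \widehat{A_\lambda} \xi = \fint_Y \xi \cdot A_\lambda(\xi + \nabla v) \, dyds$ and add the vanishing quantity $\fint_Y \nabla v \cdot A_\lambda(\xi + \nabla v) \, dyds$ to obtain
$$\xi \cdot \widehat{A_\lambda} \xi = \fint_Y (\xi + \nabla v) \cdot A_\lambda (\xi + \nabla v) \, dyds \ge \mu \fint_Y |\xi + \nabla v|^2 \, dyds,$$
using the pointwise ellipticity from (\ref{ellipticity}). Finally, since $v$ is $y$-periodic we have $\fint_Y \nabla v \, dyds = 0$, so expanding the square gives $\fint_Y |\xi + \nabla v|^2 \, dyds = |\xi|^2 + \fint_Y |\nabla v|^2 \, dyds \ge |\xi|^2$, which yields (\ref{2.5}). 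The only point that requires care is verifying that the $\partial_s$ term in the weak formulation vanishes after testing with $v$ itself; this is the single place where $\lambda$-periodicity in time plays an essential role and is the main (minor) obstacle, but it follows directly from the temporal periodicity built into the cell problem.
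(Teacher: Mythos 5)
Your proof is correct and takes essentially the same route as the paper's: the paper's opening identity $\xi\cdot\widehat{A_\lambda}\xi = \fint_0^\lambda\!\!\int_{\mathbb{T}^d} A_\lambda\nabla(\xi\cdot y+\xi\cdot\chi^\lambda)\cdot\nabla(\xi\cdot y+\xi\cdot\chi^\lambda)\,dyds$ is exactly the definition of $\widehat{A_\lambda}$ combined with the orthogonality identity you derive by testing the cell problem against $\xi\cdot\chi^\lambda$ and killing the $\partial_s$ term via $\lambda$-periodicity. The paper merely leaves that identity implicit, and both arguments then conclude with pointwise ellipticity and the vanishing of the mean of $\nabla\chi^\lambda$.
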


\begin{proof}
The inequality $|\widehat{A_\lambda}|\le C$ follows readily from (\ref{2.1}).
To see (\ref{2.5}),
we note that
$$
\aligned
\xi \cdot \widehat{A_\lambda} \xi
&=\fint_0^\lambda \!\!\!\int_{\mathbb{T}^d}
A_\lambda \nabla ( \xi\cdot y +\xi \cdot \chi^\lambda)
\cdot \nabla ( \xi\cdot y + \xi \cdot \chi^\lambda)\, dy ds\\
& \ge \mu \fint_0^\lambda \!\!\!\int_{\mathbb{T}^d}
|\nabla (\xi \cdot y + \xi \cdot \chi^\lambda)|^2\, dyds\\
& =\mu \fint_0^\lambda \!\!\!\int_{\mathbb{T}^d}
( |\xi|^2 + |\xi\nabla \chi^\lambda|^2) \, dy ds\\
& \ge \mu |\xi|^2
\endaligned
$$
for any $\xi \in \mathbb{R}^d$, where we have used the fact 
$\int_0^\lambda \!\!\int_{\mathbb{T}^d} \nabla\chi^\lambda\, dy ds=0$.
\end{proof}

It is well known that for a fixed $\lambda>0$,
the homogenized operator for the parabolic operator 
\begin{equation}
\partial_t +\mathcal{L}_{\e, \lambda}=
\partial_t -\text{\rm div} \big(A_\lambda (x/\e, t/\e^2)\nabla \big)
\end{equation}
is given by $\partial_t -\text{\rm div} \big( \widehat{A_\lambda}\nabla \big)$ \cite{BLP-2011}.
In particular, if $k=2$, the homogenized operator for the  operator in (\ref{operator-0}) is
given by  $\partial_t -\text{\rm div} \big( \widehat{A_\lambda}\nabla \big)$ with $\lambda=1$.

To introduce the homogenized operator for $\partial_t +\mathcal{L}_\e$  in (\ref{operator-0}) for 
$k\neq 2$, we first consider the case $0<k<2$.
Let $\chi^\infty =\chi^\infty (y, s)= (\chi_1^\infty (y, s), \dots, \chi_d^\infty (y, s) )$,
where $\chi_j^\infty=\chi_j^\infty (y, s)$ denotes the weak solution of the (elliptic) cell problem,
\begin{equation}\label{2.8}
\left\{
\aligned
& -\text{\rm div} \big( A\nabla \chi_j^\infty) =\text{\rm div}(A\nabla y_j) \quad \text{ in } \mathbb{R}^{d+1},\\
&\chi_j^\infty \text{ is 1-periodic in } (y, s),\\
& \int_{\mathbb{T}^d} \chi_j^\infty (y, s)\, dy=0.
\endaligned
\right.
\end{equation}
By the energy estimates and Poincar\'e's inequality,
\begin{equation}\label{2.10}
\int_{\mathbb{T}^d} \left( |\nabla \chi_j^\infty(y, s)|^2 +|\chi_j^\infty(y, s)|^2\right)dy \le C,
\end{equation}
for a.e. $s\in \mathbb{R}$, where $C$ depends only on $d$ and $\mu$.
Let
\begin{equation}\label{2.11}
\widehat{A_\infty}
=\int_0^1 \!\!\! \int_{\mathbb{T}^d}
\left( A + A \nabla \chi^\infty \right) dyds.
\end{equation}
It follows from (\ref{2.10}) that $|\widehat{A_\infty}|\le C$, where $C$ depends only on $d$ and $\mu$.
By the same argument as in the proof of Lemma \ref{lemma-2.1}, one may also show that
\begin{equation}\label{2.12}
\mu |\xi|^2 \le \xi \cdot \widehat{A_\infty} \xi
\end{equation}
for any $\xi \in \mathbb{R}^d$.
For $0<k<2$, the homogenized operator for the parabolic operator in (\ref{operator-0}) is given by
$\partial_t -\text{\rm div} \big(\widehat{A_\infty}\nabla \big)$ (see \cite{BLP-2011}).

Next, we consider the case $2<k<\infty$.
Define
\begin{equation}\label{2.13}
\overline{A}=\overline{A}(y)
=\int_0^1 A(y, s)\, ds.
\end{equation}
Let $\chi^0 =\chi^0 (y)=( \chi^0_1 (y), \dots, \chi_d^0 (y) )$,
where $\chi_j^0=\chi_j^0 (y)$  is  the weak solution of the (elliptic) cell problem,
\begin{equation}\label{2.14}
\left\{
\aligned
& -\text{\rm div} \left( \overline{A}\nabla \chi_j^0 \right)=\text{\rm div} \left(\overline{A}\nabla y_j\right) \quad \text{ in } \mathbb{R}^d,\\
& \chi_j^0 \text{ is 1-periodic in } y,\\
&\int_{\mathbb{T}^d} \chi_j^0\, dy =0.
\endaligned
\right.
\end{equation}
As in the case $0<k<2$, by the energy estimates and Poincar\'e's  inequality,
\begin{equation}\label{2.15}
\int_{\mathbb{T}^d} \left( |\nabla \chi_j^0(y)|^2 +|\chi_j^0(y)|^2\right)dy \le C,
\end{equation}
where $C$ depends only on $d$ and $\mu$.
Let
\begin{equation}\label{2.16}
\widehat{A_0}
=\int_0^1 \!\!\! \int_{\mathbb{T}^d}
\left( A + A \nabla \chi^0 \right) dyds
= \int_{\mathbb{T}^d}
\left( \overline{A} + \overline{A} \nabla \chi^0 \right) dy.
\end{equation}
It follows from (\ref{2.15}) that $|\widehat{A_0}|\le C$, where $C$ depends only on $d$ and $\mu$.
By the same argument as in the proof of Lemma \ref{lemma-2.1}, we obtain 
\begin{equation}\label{2.19}
\mu |\xi|^2 \le \xi \cdot \widehat{A_0} \xi
\end{equation}
for any $\xi \in \mathbb{R}^d$.
For $2<k<\infty$, the homogenized operator for $\partial_t +\mathcal{L}_\e$ in (\ref{operator-0}) is given by
$\partial_t -\text{\rm div} \big(\widehat{A_0}\nabla \big)$ (see \cite{BLP-2011}).

In the remaining of this section we study the asymptotic behavior of 
the matrix $\widehat{A_\lambda}$, as $\lambda\to \infty$ and as $\lambda \to 0$.
We begin with a lemma on the higher integrability of  $\nabla\chi^\lambda$.

\begin{lemma}\label{lemma-2.2}
Let $\chi^\lambda$ be defined by (\ref{cell-lambda}).
Then there exists $q>2$, depending on $d$ and $\mu$, such that
\begin{equation}\label{2.00}
\left(\fint_0^\lambda \!\!\!\int_{\mathbb{T}^d}
|\nabla \chi^\lambda|^q\, dy ds \right)^{1/q}
\le C,
\end{equation}
where $C$ depends only on $d$ and $\mu$.
\end{lemma}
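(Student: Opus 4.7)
The plan is a classical Meyers-type higher integrability argument: establish a uniform (in $\lambda$) reverse H\"older inequality for $|\nabla \chi^\lambda|$ on parabolic cylinders, then invoke a parabolic Gehring lemma. The key observation is that since the Caccioppoli and Sobolev-Poincar\'e inequalities are applied on cylinders whose radii are unrelated to the period $(1, \lambda)$, their constants depend only on $d$ and $\mu$.

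Fix a parabolic cylinder $Q_r = Q_r(x_0, t_0)$ and set $c_j = \fint_{Q_r} \chi_j^\lambda$. Since $\chi_j^\lambda$ satisfies $\partial_s \chi_j^\lambda - \text{\rm div}(A_\lambda \nabla \chi_j^\lambda) = \text{\rm div}(A_\lambda \nabla y_j)$ and $|A_\lambda \nabla y_j| \le \mu^{-1}$, the standard parabolic Caccioppoli inequality applied to $\chi_j^\lambda - c_j$ yields
\begin{equation*}
\fint_{Q_{r/2}} |\nabla \chi_j^\lambda|^2 \le \frac{C}{r^2} \fint_{Q_r} |\chi_j^\lambda - c_j|^2 + C,
\end{equation*}
with $C$ depending only on $d$ and $\mu$. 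A parabolic Sobolev-Poincar\'e inequality, combining spatial Poincar\'e with the time-derivative control that the equation furnishes, then gives, for $2_* = \frac{2(d+2)}{d+4} < 2$,
\begin{equation*}
\left( \fint_{Q_r} |\chi_j^\lambda - c_j|^2 \right)^{1/2} \le C r \left( \fint_{Q_r} |\nabla \chi_j^\lambda|^{2_*} \right)^{1/2_*} + C r,
\end{equation*}
and combining the two produces the reverse H\"older inequality
\begin{equation*}
\left( \fint_{Q_{r/2}} |\nabla \chi_j^\lambda|^2 \right)^{1/2} \le C \left( \fint_{Q_r} |\nabla \chi_j^\lambda|^{2_*} \right)^{1/2_*} + C.
\end{equation*}

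A parabolic Gehring lemma then provides some $q > 2$ and a constant $C$, both depending only on $d$ and $\mu$, such that for every parabolic cylinder $Q_r$,
\begin{equation*}
\left( \fint_{Q_{r/2}} |\nabla \chi_j^\lambda|^q \right)^{1/q} \le C \left( \fint_{Q_r} |\nabla \chi_j^\lambda|^2 \right)^{1/2} + C.
\end{equation*}
Choose $r = 4(1 + \sqrt{\lambda})$. The right-hand side is then bounded by a constant depending only on $d$ and $\mu$, by (\ref{Q-e-1}). Since $|\nabla \chi^\lambda|^q$ is $(1, \lambda)$-periodic in $(y, s)$ and $Q_{r/2}$ contains at least one full fundamental cell in both space and time, the average of $|\nabla \chi_j^\lambda|^q$ over $Q_{r/2}$ dominates, up to a constant depending only on $d$, its average over $\mathbb{T}^d \times (0, \lambda)$. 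Summing in $j$ then yields (\ref{2.00}).

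The main technical obstacle is the parabolic Sobolev-Poincar\'e step. A slice-by-slice spatial Poincar\'e only controls $|\chi_j^\lambda - \overline{\chi_j^\lambda}(s)|$ in terms of $\nabla_y \chi_j^\lambda$, leaving the time-averaged constant unresolved. Standard remedies are to use the equation to bound $\partial_s \chi_j^\lambda$ in a negative-order norm and combine it with the spatial Poincar\'e inequality, or to invoke the parabolic embedding $L^\infty_t L^2_x \cap L^2_t H^1_x \hookrightarrow L^{p^*}_{t,x}$ after a Caccioppoli estimate. Either way, the resulting constants depend only on $d$ and $\mu$, which is precisely what keeps the entire argument uniform in $\lambda$.
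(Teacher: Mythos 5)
Your proposal is correct and follows essentially the same route as the paper: the paper applies the interior Meyers-type estimate for parabolic systems (cited from the literature, applied to $u=y_j+\e$-free solution $y_j+\chi_j^\lambda$) on a cylinder of radius comparable to $1+\sqrt{\lambda}$, then uses $(1,\lambda)$-periodicity and the energy bound (\ref{Q-e-1}) exactly as you do. The only difference is that you unpack the Meyers estimate into its standard ingredients (Caccioppoli, parabolic Sobolev--Poincar\'e, Gehring) rather than quoting it, and you correctly identify that its constants are independent of $\lambda$ because the cylinders are untethered from the period.
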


\begin{proof}
Let $u(y, s)=y_j +\chi_j^\lambda$.
Then
$
\partial_s u -\text{\rm div} ( A_\lambda \nabla u) =0   \text{ in } \mathbb{R}^{d+1}.
$
By  Meyers-type estimates for parabolic systems (see e.g. \cite[Appendix]{Armstrong-2018}),
 there exist $q>2$ and $C>0$, depending only on $d$ and $\mu$, such that
\begin{equation}\label{2.01}
\left(\fint_{Q_r} |\nabla u|^q\, dyds \right)^{1/q}
\le C \left(\fint_{Q_{2r}} |\nabla u|^2\, dyds \right)^{1/2}
\end{equation}
for any $Q_r =Q_r (x, t) =B(x, r) \times (t-r^2, t)$.
It follows that
\begin{equation}\label{2.02}
\left(\fint_{Q_r} |\nabla \chi_j^\lambda |^q\, dyds \right)^{1/q}
\le  C+ C \left(\fint_{Q_{2r}} |\nabla \chi_j^\lambda |^2\, dyds \right)^{1/2}.
\end{equation}
Choose $r>1+\sqrt{\lambda}$ so large that $\mathbb{T}^d \times (0, \lambda)\subset Q_r$.
Since $\nabla \chi_j^\lambda$ is 1-periodic in $y$ and $\lambda$-periodic in $s$, we obtain 
$$
\aligned
\left(\fint_0^\lambda\!\!\!   \int_{\mathbb{T}^d}
|\nabla \chi_j^\lambda|^q \, dyds \right)^{1/q}
& \le C \left(\fint_{Q_r} 
|\nabla \chi_j^\lambda|^q \, dyds \right)^{1/q}\\
& \le C + C \left(\fint_{Q_{2r}}
|\nabla \chi_j^\lambda |^2\, dyds \right)^{1/2}\\
&\le C,
\endaligned
$$
where we have used (\ref{Q-e-1}) for the last step.
\end{proof}

\begin{thm}\label{theorem-2.1}
Assume  $A=A(y, s)$ satisfies conditions (\ref{ellipticity}) and (\ref{periodicity}).
Then 
\begin{equation}\label{2.20}
\widehat{A_\lambda} \to \widehat{A_\infty} \quad \text{  as }\lambda \to \infty.
\end{equation}
Moreover, if $\|\partial_s A\|_\infty<\infty$, then
\begin{equation}\label{2.21}
|\widehat{A_\lambda} -\widehat{A_\infty}|\le C \lambda^{-1} \|\partial_s A\|_\infty
\end{equation}
for any $\lambda>1$,
where $C$ depends only on $d$ and  $\mu$.
\end{thm}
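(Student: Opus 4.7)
The plan is to rescale time so that both matrices are expressed as integrals over the same torus $\mathbb{T}^{d+1}$, and then apply an energy estimate to the difference. Setting $\tilde{\chi}_j^\lambda(y, \tau) := \chi_j^\lambda(y, \lambda\tau)$ and using $A_\lambda(y, \lambda\tau) = A(y, \tau)$, one checks from (\ref{cell-lambda}) that $\tilde{\chi}_j^\lambda$ is 1-periodic in $(y, \tau)$, satisfies $\int_{\mathbb{T}^d} \tilde{\chi}_j^\lambda(y, \tau)\, dy = 0$ for every $\tau$, and solves
\begin{equation*}
\lambda^{-1} \partial_\tau \tilde{\chi}_j^\lambda \;-\; \operatorname{div}_y \bigl( A(y, \tau)\, \nabla_y \tilde{\chi}_j^\lambda \bigr) \;=\; \operatorname{div}_y \bigl( A(y, \tau)\, \nabla y_j \bigr).
\end{equation*}
The change of variable $s = \lambda\tau$ converts (\ref{2.4}) into $\widehat{A_\lambda} = \int_0^1 \!\int_{\mathbb{T}^d} A(y, \tau)\, (I + \nabla_y \tilde{\chi}^\lambda)\, dy\, d\tau$, so writing $w_j := \tilde{\chi}_j^\lambda - \chi_j^\infty$,
\begin{equation*}
\widehat{A_\lambda} - \widehat{A_\infty} \;=\; \int_0^1 \!\!\int_{\mathbb{T}^d} A(y, \tau)\, \nabla_y w_j\, dy\, d\tau,
\end{equation*}
and it suffices to bound $\|\nabla_y w_j\|_{L^2(\mathbb{T}^{d+1})}$.

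For the quantitative estimate (\ref{2.21}), subtracting the elliptic equation for $\chi_j^\infty$ from the equation for $\tilde{\chi}_j^\lambda$ gives
\begin{equation*}
\lambda^{-1}\, \partial_\tau w_j \;-\; \operatorname{div}_y(A \nabla_y w_j) \;=\; -\lambda^{-1}\, \partial_\tau \chi_j^\infty,
\end{equation*}
where $w_j$ is 1-periodic and mean-zero in $y$ for each $\tau$. Testing against $w_j$ and integrating over $\mathbb{T}^{d+1}$, the $\tau$-derivative contribution vanishes by periodicity, and ellipticity together with the $y$-Poincar\'e inequality gives $\|\nabla_y w_j\|_{L^2(\mathbb{T}^{d+1})} \le C \lambda^{-1} \|\partial_s \chi_j^\infty\|_{L^2(\mathbb{T}^{d+1})}$. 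It remains to bound $\|\partial_s \chi_j^\infty\|_{L^2}$ by $\|\partial_s A\|_\infty$: differentiating (\ref{2.8}) in $s$ yields $-\operatorname{div}(A \nabla \partial_s \chi_j^\infty) = \operatorname{div}\bigl((\partial_s A)(\nabla y_j + \nabla \chi_j^\infty)\bigr)$ on $\mathbb{T}^d$, and an energy estimate combined with (\ref{2.10}) plus the Poincar\'e inequality (applicable since $\int_{\mathbb{T}^d} \partial_s \chi_j^\infty\, dy = 0$) gives the uniform bound. Combining these with $|\widehat{A_\lambda} - \widehat{A_\infty}| \le \|A\|_\infty\, \|\nabla_y w\|_{L^2(\mathbb{T}^{d+1})}$ yields (\ref{2.21}).

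For the qualitative convergence (\ref{2.20}), without regularity on $A$, I would argue by weak compactness. The uniform $H^1(\mathbb{T}^{d+1})$-bound on $\tilde{\chi}^\lambda$ coming from (\ref{Q-e-1}) and the mean-zero condition allows extraction of a subsequence with $\tilde{\chi}^\lambda \rightharpoonup v^{\ast}$ in $H^1$. Passing to the limit in the weak formulation against product test functions $\phi(y)\eta(\tau)$, the $\lambda^{-1}$-prefactor kills the time-derivative term, while weak convergence of $\nabla_y \tilde{\chi}^\lambda$ handles the remainder; for a.e.\ $\tau$, $v^{\ast}(\cdot, \tau)$ then solves the elliptic cell problem (\ref{2.8}) at parameter $\tau$ with mean zero, so by uniqueness $v^{\ast} = \chi^\infty$, the full family converges, and (\ref{2.20}) follows after integrating against $A$. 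The main delicate point throughout is the energy argument for $w_j$: because the rescaled equation loses all coercivity in $\tau$ as $\lambda \to \infty$, one must exploit $\tau$-periodicity to eliminate the singular $\lambda^{-1} \partial_\tau$ term and close the estimate using only the $y$-Poincar\'e inequality.
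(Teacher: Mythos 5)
Your proof of the quantitative estimate (\ref{2.21}) is essentially the paper's own argument: after rescaling time, the difference $w_j=\tilde\chi_j^\lambda-\chi_j^\infty$ satisfies a parabolic equation whose singular $\lambda^{-1}\partial_\tau$ term is killed by $\tau$-periodicity upon testing against $w_j$, and the resulting bound $\|\nabla_y w_j\|_{L^2}\le C\lambda^{-1}\|\partial_s\chi_j^\infty\|_{L^2}$ is closed by differentiating the elliptic cell problem (\ref{2.8}) in $s$ and using Poincar\'e's inequality in $y$. The paper records the sharper $L^{p_0}$ version (\ref{2.24}) of this bound via Meyers estimates, but under the stated hypothesis $\|\partial_s A\|_\infty<\infty$ your direct estimate of the term $\int|\partial_s A|^2|\nabla\chi_j^\infty|^2$ by $\|\partial_s A\|_\infty^2$ times (\ref{2.10}) is enough. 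Where you genuinely diverge is (\ref{2.20}): the paper approximates $A$ in $L^{p_0}(\mathbb{T}^{d+1})$ by smooth matrices $D$, proves continuity of $D\mapsto\widehat{D_\lambda},\widehat{D_\infty}$ in that norm, and applies the quantitative bound to $D$; you instead argue by weak compactness, extracting $\tilde\chi^\lambda\rightharpoonup v^{\ast}$, passing to the limit in the weak formulation (the $\lambda^{-1}$ prefactor annihilates the time-derivative term), and identifying $v^{\ast}=\chi^\infty$ by uniqueness of the mean-zero elliptic corrector for a.e.\ $\tau$. Your route avoids the paper's auxiliary continuity lemma $|\widehat{A_\lambda}-\widehat{D_\lambda}|\le C\|A-D\|_{L^{p_0}}$ entirely, at the cost of being purely qualitative; the paper's route keeps everything quantitative and reuses the Meyers machinery already in place. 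One small imprecision: you invoke a ``uniform $H^1(\mathbb{T}^{d+1})$-bound'' on $\tilde\chi^\lambda$, but (\ref{Q-e-1}) controls only $\tilde\chi^\lambda$ and $\nabla_y\tilde\chi^\lambda$ in $L^2$, not $\partial_\tau\tilde\chi^\lambda$. Fortunately your limit passage uses only weak $L^2$ convergence of those two quantities (together with preservation of the mean-zero condition under weak limits), so this is cosmetic rather than a gap.
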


\begin{proof}
We first prove (\ref{2.21}).
Observe that
$$
\widehat{A_\lambda}-\widehat{A_\infty}
=\int_0^1\!\!\! \int_{\mathbb{T}^d}
A (y, s) \nabla \left\{ \chi^\lambda (y, \lambda s)-\chi^\infty (y, s) \right\} dyds.
$$
It follows by the Cauchy inequality that
\begin{equation}\label{2.22}
| \widehat{A_\lambda}-\widehat{A_\infty}|
\le C \left(  \int_0^1\!\!\! \int_{\mathbb{T}^d}
| \nabla \left\{ \chi^\lambda (y, \lambda s)-\chi^\infty (y, s) \right\} |^2 \, dyds\right)^{1/2}.
\end{equation}
By the definitions of $\chi^\lambda$ and $ \chi^\infty$,
$$
\frac{1}{\lambda}
\frac{\partial}{\partial s}
\big\{ \chi_j^\lambda (y, \lambda s) \big\}
-\text{\rm div} \big\{ A(y, s) \nabla \big( \chi_j^\lambda (y, \lambda s) -\chi_j^\infty (y, s) \big) \big\}=0
\quad \text{ in } \mathbb{T}^{d+1}.
$$
This leads to
$$
\aligned
 &\int_0^1\!\!\! \int_{\mathbb{T}^d}
A(y, s) \nabla \big\{ \chi_j^\lambda (y, \lambda s) -\chi_j ^\infty (y, s) \big\} 
\cdot \nabla \big\{ \chi_j^\lambda (y, \lambda s) -\chi_j^\infty  (y, s) \big\}   \, dyds\\
&=-\frac{1}{\lambda }
\int_0^1\!\!\! \int_{\mathbb{T}^d} 
\frac{\partial}{\partial s} 
\big\{  \chi_j^\lambda (y, \lambda s) \big\} \cdot \big\{ \chi_j^\lambda (y, \lambda s) -\chi_j ^\infty (y, s) \big\}  \, dyds\\
&
=-\frac{1}{\lambda }
\int_0^1\!\!\! \int_{\mathbb{T}^d} 
\frac{\partial}{\partial s} 
\big\{  \chi_j^\infty (y, s) \big\} \cdot \big\{ \chi_j^\lambda (y, \lambda s) -\chi^\infty _j (y, s) \big\}  \, dyds,
\endaligned
$$
where we have used the fact
$$
\int_0^1\!\!\! \int_{\mathbb{T}^d} 
\frac{\partial}{\partial s} 
\big\{  \chi_j^\lambda (y, \lambda s) -\chi_j^\infty(y, s)  \big\} \cdot \big\{ \chi_j^\lambda (y, \lambda s) -\chi_j^\infty  (y, s) \big\} \, dyds =0
$$
for the last step.
Hence, by (\ref{ellipticity}) and the Cauchy inequality,
$$
\aligned
& \mu \int_0^1\!\!\!\int_{\mathbb{T}^d}
|\nabla \big\{ \chi_j^\lambda (y, \lambda s) -\chi_j^\infty (y, s) \big\} |^2\, dyds\\
& \le \frac{1}{\lambda}
\left(\int_0^1\!\!\! \int_{\mathbb{T}^d} 
|\chi_j^\lambda(y, \lambda s) -\chi_j^\infty(y, s)|^2\, dyds \right)^{1/2}
\left(\int_0^1\!\!\! \int_{\mathbb{T}^d}  |\partial_s \chi_j^\infty(y, s)|^2\, dyds\right)^{1/2}.
\endaligned
$$
Since
$$
\int_{\mathbb{T}^d}
\chi_j^\lambda (y, \lambda s)\, dy 
=\int_{\mathbb{T}^d} \chi_j^\infty (y, s)\, dy=0,
$$
by Poincar\'e's inequality,
 we obtain
$$
\left( \int_0^1\!\!\!\int_{\mathbb{T}^d}
|\nabla \big\{ \chi_j^\lambda (y, \lambda s) -\chi_j^\infty (y, s) \big\} |^2\, dyds\right)^{1/2}
\le \frac{C}{\lambda}
\left(\int_0^1\!\!\! \int_{\mathbb{T}^d}  |\partial_s \chi_j^\infty(y, s)|^2\, dyds\right)^{1/2}.
$$
In view of (\ref{2.22}) we have proved that
\begin{equation}\label{2.23}
|\widehat{A_\lambda} -\widehat{A_\infty}|
\le \frac{C}{\lambda}
\left(\int_0^1\!\!\! \int_{\mathbb{T}^d}  |\partial_s \chi^\infty(y, s)|^2\, dyds\right)^{1/2},
\end{equation}
where $C$ depends only on $d$ and $\mu$.

To bound the right-hand side of (\ref{2.23}), we differentiate in $s$ the elliptic equation for 
$\chi_j^\infty$ to obtain
$$
-\text{\rm div} \big(A \nabla \partial_s \chi_j^\infty)
=\text{\rm div} \big(\partial_s A \nabla y_j ) +\text{\rm div} \big(\partial_s A \nabla \chi_j^\infty).
$$
It follows that
$$
\int_{\mathbb{T}^d} 
|\nabla \partial_s \chi_j^\infty (y, s)|^2\, dy
\le C \int_{\mathbb{T}^d} |\partial_s A(y, s)|^2\, dy
+ C \int_{\mathbb{T}^d} |\partial_s A(y, s)|^2 |\nabla \chi_j^\infty (y, s)|^2\, dy.
$$
By Meyers estimates, there exists some $q>2$, depending only on $d$ and $\mu$, such that
$$
\int_{\mathbb{T}^d} |\nabla \chi_j^\infty(y, s) |^q \, dy\le C,
$$
where $C$ depends only on $d$ and $\mu$.
Thus, by H\"older's inequality,
$$
\left(\int_0^1\!\!\!\int_{\mathbb{T}^d} |\nabla \partial_s \chi_j^\infty |^2\, dyds \right)^{1/2}
\le C \left(\int_0^1\!\!\!\int_{\mathbb{T}^d} |\partial_s A|^{p_0}\, dy ds \right)^{1/p_0},
$$
for $p_0 =\frac{2 q}{q-2}$.
In view of (\ref{2.23})  this gives 
\begin{equation}\label{2.24}
|\widehat{A_\lambda} -\widehat{A_\infty}|
\le \frac{C}{\lambda}
\left(\int_0^1\!\!\! \int_{\mathbb{T}^d}  |\partial_s A |^{p_0} \, dyds\right)^{1/p_0},
\end{equation}
by using Poincar\'e's  inequality.
As a consequence, we obtain (\ref{2.21}).

Finally, to prove (\ref{2.20}),
we let $D$ be a matrix satisfying conditions (\ref{ellipticity}) and (\ref{periodicity}).
Also assume that $D$ is smooth in $(y, s)$.
Let $\widehat{D_\lambda}$ and $\widehat{D_\infty}$ be defined in the same manner as
$\widehat{A_\lambda}$ and $\widehat{A_\infty}$, respectively.
By using the energy estimates as well as (\ref{2.00}),
it is not hard to show that
$$
|\widehat{A_\lambda}
-\widehat{D_\lambda} |
\le C \left(\int_0^1\!\!\!\int_{\mathbb{T}^d} 
|A -D|^{p_0} \, dyds \right)^{1/p_0},
$$
where $C$ depends only on $d$ and $\mu$.
A similar argument also gives
$$
|\widehat{A_\infty}
-\widehat{D_\infty} |
\le C \left(\int_0^1\!\!\!\int_{\mathbb{T}^d} 
|A -D|^{p_0} \, dyds \right)^{1/p_0}.
$$
Thus, by applying the estimate (\ref{2.24}) to the matrix $D$, we obtain 
$$
\aligned
|\widehat{A_\lambda} -\widehat{A_\infty} |
&\le |\widehat{A_\lambda} -\widehat{D_\lambda}|
+| \widehat{D_\lambda} -\widehat{D_\infty}|
+| \widehat{D_\infty} -\widehat{A_\infty}|\\
& \le C \left(\int_0^1\!\!\!\int_{\mathbb{T}^d} 
|A -D|^{p_0} \, dyds \right)^{1/p_0}
+ \frac{C}{\lambda}
\left(\int_0^1\!\!\! \int_{\mathbb{T}^d}  |\partial_s D |^{p_0} \, dyds\right)^{1/p_0}.
\endaligned
$$
It follows that
$$
\limsup_{\lambda\to \infty} 
|\widehat{A_\lambda} -\widehat{A_\infty} |
\le 
 C \left(\int_0^1\!\!\!\int_{\mathbb{T}^d} 
|A -D|^{p_0} \, dyds \right)^{1/p_0}.
$$
Since $p_0 =\frac{2q}{q-2}  <\infty$,
by using convolution,
we may approximate $A$ in $L^{p_0} (\mathbb{T}^{d+1})$ by a sequence of smooth matrices satisfying (\ref{ellipticity}) and (\ref{periodicity}).
As a result, we conclude  that $\widehat{A_\lambda} \to \widehat{A_\infty}$ as $\lambda \to \infty$.
\end{proof}

\begin{remark}
It follows from the proof of Theorem \ref{theorem-2.1} that
$$
\aligned
& \left(\fint_0^\lambda \!\!\!\int_{\mathbb{T}^d}
|\nabla \chi^\lambda (y, s) -
\nabla \chi^\infty (y, s/\lambda)|^2\, dyds \right)^{1/2}
+
\left(\fint_0^\lambda \!\!\!\int_{\mathbb{T}^d}
| \chi^\lambda (y, s) -
 \chi^\infty (y, s/\lambda)|^2\, dyds \right)^{1/2}\\
& \qquad\quad
\le C \lambda^{-1} \|\partial_s A\|_\infty.
\endaligned
$$
By the periodicity this  implies that if $ r \ge (1+\sqrt{\lambda} ) \e$,
\begin{equation}\label{Q-e-5}
\aligned
& \left(\fint_{Q_r}
|\nabla \chi^\lambda (y/\e, s/\e^2) -
\nabla \chi^\infty (y/\e, s/(\lambda \e^2)) |^2\, dyds \right)^{1/2}\\
& \qquad \qquad \qquad+
\left(\fint_{Q_r}
| \chi^\lambda (y/\e, s/\e^2) -
 \chi^\infty (y/\e, s/(\lambda\e^2)) |^2\, dyds \right)^{1/2}\\
& \qquad\quad
\le C \lambda^{-1} \|\partial_s A\|_\infty.
\endaligned
\end{equation}

\end{remark}

The next theorem is concerned with the limit of $\widehat{A_\lambda}$ as $\lambda\to 0$.

\begin{thm}\label{theorem-2.2}
Assume $A=A(y, s)$ satisfies conditions (\ref{ellipticity}) and (\ref{periodicity}).
Then 
\begin{equation}\label{2.30}
\widehat{A_\lambda} \to \widehat{A_0} \quad \text{  as }\lambda \to 0.
\end{equation}
Moreover, if $\|\nabla^2 A\|_\infty<\infty$, then
\begin{equation}\label{2.31}
|\widehat{A_\lambda} -\widehat{A_0}|\le C \lambda \big\{  \|\nabla^2 A\|_\infty + \|\nabla A\|^2_\infty \big\},
\end{equation}
where $C$ depends only on $d$ and  $\mu$.
\end{thm}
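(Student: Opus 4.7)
The plan is to follow the scheme of the proof of Theorem~\ref{theorem-2.1}, reducing everything to the unit parabolic cell via the substitution $\tilde\chi^\lambda_j(y,s):=\chi_j^\lambda(y,\lambda s)$. This function is 1-periodic on $\mathbb{T}^{d+1}$ and satisfies
$$
\lambda^{-1}\partial_s \tilde\chi^\lambda_j -\text{\rm div}\big(A(y,s)\nabla \tilde\chi^\lambda_j\big) = \text{\rm div}\big(Ae_j\big) \quad \text{in } \mathbb{T}^{d+1}.
$$
Since $\chi^0_j$ is $s$-independent and $\int_0^1 A\, ds=\overline A$, one gets the identity
$$
(\widehat{A_\lambda}-\widehat{A_0})e_j=\int_0^1\!\!\int_{\mathbb{T}^d} A(y,s)\nabla w_j\, dyds, \qquad w_j:=\tilde\chi^\lambda_j-\chi^0_j,
$$
and by Cauchy-Schwarz it suffices to prove $\|\nabla w_j\|_{L^2(\mathbb{T}^{d+1})}\le C\lambda(\|\nabla^2A\|_\infty+\|\nabla A\|_\infty^2)$. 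Subtracting the equation for $\chi^0_j$, $w_j$ satisfies
$$
\lambda^{-1}\partial_s w_j-\text{\rm div}(A\nabla w_j)=\text{\rm div}(G_j),\qquad G_j:=(A-\overline A)(\nabla\chi^0_j+e_j),
$$
in $\mathbb{T}^{d+1}$. The structural fact driving the whole argument is that $\int_0^1 G_j(y,s)\,ds=0$, which allows us to pick a 1-periodic vector field $\Psi_j(y,s)\in\mathbb{R}^d$ of mean zero in $s$ with $\partial_s \Psi_j=G_j$. This $\Psi_j$ plays a role analogous to that of $\partial_s\chi^\infty$ in the proof of Theorem~\ref{theorem-2.1}.

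A naive energy estimate testing against $w_j$ only gives $\|\nabla w_j\|_2\le C$, which is the wrong order in $\lambda$. To gain a factor of $\lambda$, I would peel off the leading behaviour by introducing the first-order corrector $\eta_j:=\lambda\,\text{\rm div}(\Psi_j)$, motivated by the formal two-scale expansion $\tilde\chi^\lambda_j\sim \chi^0_j+\lambda\,\text{\rm div}(\Psi_j)+O(\lambda^2)$, and setting $r_j:=w_j-\eta_j$. A direct computation, using $\partial_s\text{\rm div}(\Psi_j)=\text{\rm div}(G_j)$, shows that
$$
\lambda^{-1}\partial_s r_j-\text{\rm div}(A\nabla r_j)=\lambda\,\text{\rm div}\big(A\nabla\text{\rm div}(\Psi_j)\big)\quad\text{in }\mathbb{T}^{d+1}.
$$
Since $r_j$ has mean zero in $y$ for each $s$ and is 1-periodic, testing against $r_j$ (the $\lambda^{-1}\partial_s$ term drops by periodicity) yields the crucial gain
$$
\|\nabla r_j\|_{L^2(\mathbb{T}^{d+1})}\le C\lambda\,\|\nabla\text{\rm div}(\Psi_j)\|_{L^2(\mathbb{T}^{d+1})},
$$
and since $\|\nabla\eta_j\|_2=\lambda\|\nabla\text{\rm div}(\Psi_j)\|_2$ as well, the problem is reduced to bounding $\|\nabla\text{\rm div}(\Psi_j)\|_{L^2}$.

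The remaining ingredient is higher regularity for the elliptic corrector $\chi^0$. Unwinding the definitions, $\|\nabla\text{\rm div}(\Psi_j)\|_2$ is controlled by $\|\nabla^2A\|_\infty$, $\|\nabla A\|_\infty$, and $\|\nabla^k\chi^0\|_{L^2}$ for $k=1,2,3$. Differentiating the elliptic cell problem (\ref{2.14}) once and twice in $y$ and running the standard energy estimate yields $\|\nabla^2\chi^0\|_2\le C\|\nabla A\|_\infty$ and $\|\nabla^3\chi^0\|_2\le C(\|\nabla^2A\|_\infty+\|\nabla A\|_\infty^2)$, which together with the $W^{2,\infty}$ bound on $A-\overline A$ give $\|\nabla\text{\rm div}(\Psi_j)\|_2\le C(\|\nabla^2A\|_\infty+\|\nabla A\|_\infty^2)$, proving (\ref{2.31}). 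For the qualitative convergence (\ref{2.30}) I would mirror the density argument at the end of Theorem~\ref{theorem-2.1}: approximate $A$ in $L^{p_0}(\mathbb{T}^{d+1})$ by smooth matrices $D$ satisfying (\ref{ellipticity})-(\ref{periodicity}), use Lemma~\ref{lemma-2.2} to bound $|\widehat{A_\lambda}-\widehat{D_\lambda}|$ and $|\widehat{A_0}-\widehat{D_0}|$ by $C\|A-D\|_{p_0}$, apply (\ref{2.31}) to $D$, and send $\lambda\to 0$ before $D\to A$. The main obstacle is conceptual rather than computational: spotting the ansatz $\eta_j=\lambda\,\text{\rm div}(\Psi_j)$ and recognising the role of the mean-zero property of $A-\overline A$ in $s$. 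The $H^3$ estimate on $\chi^0$ is then routine, but it is precisely what forces the quadratic term $\|\nabla A\|_\infty^2$ in the final bound.
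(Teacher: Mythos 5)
Your proposal is correct, and it takes a genuinely different route from the paper for the quantitative estimate (\ref{2.31}). The paper splits $\widehat{A_\lambda}-\widehat{A_0}$ into the two pieces $I_1+I_2$ of (\ref{2.32}), uses the $s$-antiderivative of $A-\overline{A}$ to integrate by parts in each piece, and reduces both to the single quantity $\lambda\big(\fint_0^\lambda\!\!\int_{\mathbb{T}^d}|\partial_s\nabla\chi^\lambda|^2\big)^{1/2}$; it then controls $\partial_s\nabla\chi^\lambda$ through the cell equation by establishing uniform-in-$\lambda$ bounds on $\nabla^2\chi^\lambda$ and $\nabla^3\chi^\lambda$ (differentiating (\ref{2.35}) once more in $y$). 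You instead work directly with $w_j=\tilde\chi^\lambda_j-\chi^0_j$, exploit the same mean-zero-in-$s$ structure of $A-\overline{A}$ to build the explicit first-order ansatz $\eta_j=\lambda\,\mathrm{div}(\Psi_j)$, and obtain the gain of $\lambda$ from a single energy estimate for $r_j=w_j-\eta_j$ (I checked the cancellation $\lambda^{-1}\partial_s\eta_j=\mathrm{div}(G_j)$ and the resulting equation for $r_j$; both are right, and the $\lambda^{-1}\partial_s r_j$ term indeed drops when testing against $r_j$ by $s$-periodicity, with no need for the mean-zero remark). The regularity input is then shifted from the parabolic corrector $\chi^\lambda$ to the elliptic corrector $\chi^0$: your $H^3$ bounds $\|\nabla^2\chi^0\|_2\le C\|\nabla A\|_\infty$ and $\|\nabla^3\chi^0\|_2\le C(\|\nabla^2 A\|_\infty+\|\nabla A\|_\infty^2)$ follow exactly as in the paper's computation for $\chi^\lambda$, but are purely elliptic and do not require uniformity in $\lambda$, which is a modest structural simplification; the price is the extra bookkeeping of $\Psi_j$ and the verification that $\nabla\,\mathrm{div}(\Psi_j)\in L^2$ (which holds under $\|\nabla^2 A\|_\infty<\infty$, as you note). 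Both routes land on the same bound $C\lambda(\|\nabla^2 A\|_\infty+\|\nabla A\|_\infty^2)$, and your density argument for (\ref{2.30}) is the same as the paper's.
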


\begin{proof}
We first prove (\ref{2.31}).
Observe that
\begin{equation}\label{2.32}
\aligned
\widehat{A_\lambda} -\widehat{A_0}
& =\int_0^1\!\!\! \int_{\mathbb{T}^d}  A (y, s)
\nabla \big( \chi^\lambda (y, \lambda s) -\chi^0 (y) \big)\,  dyds\\
&=\int_0^1\!\!\! \int_{\mathbb{T}^d}
(A(y, s)-\overline{A}(y) )
\nabla \chi^\lambda (y, \lambda s)\, dyds\\
& \qquad
+ \int_{\mathbb{T}^d}
\overline{A} (y)
\nabla \left( \int_0^1 \chi^\lambda (y, \lambda s)\, ds
-\chi^0 (y) \right) dy\\
&=I_1 +I_2.
\endaligned
\end{equation}
Write
$ A(y, s) -\overline{A}(y) = \partial_s \widetilde{A} (y, s)$, where
$$
\widetilde{A}(y, s)=\int_0^s \big( A(y, \tau)-\overline{A} (y) \big) d\tau.
$$
Since  $\widetilde{A}(y, s)$ is 1-periodic in $(y, s)$, we may use  an integration by parts and the Cauchy inequality  to obtain
\begin{equation}\label{2.33}
|I_1|
\le C \lambda 
\left(\fint_0^\lambda \!\!\!\int_{\mathbb{T}^d}
|\partial_s \nabla \chi^\lambda|^2\, dyds\right)^{1/2}.
\end{equation}

To bound the term $I_2$ in (\ref{2.32}),
we observe that
$$
-\text{\rm div}
\left( \int_0^1 A(y, s)\nabla \chi_j^\lambda (y, \lambda s)\, ds \right)
=\text{\rm div} \left( \overline{A}( y)\nabla y_j \right)
=-\text{\rm div} \left( \overline {A} (y) \nabla \chi_j^0 (y) \right).
$$
It follows that
$$
\aligned
& -\text{\rm div} \left\{  \overline{A} (y)
\nabla \left(\int_0^1 \chi_j^\lambda (y, \lambda s)\, ds -\chi_j^0(y)  \right) \right\} \\
&
\quad =\text{\rm div}
\left\{ \int_0^1 \left( A(y, s)-\overline{A}(y) \right) \nabla \chi_j^\lambda (y, \lambda s) \, ds \right\}.
\endaligned
$$
By the energy estimates we obtain 
$$
\aligned
& \| \nabla \left(\int_0^1 \chi_j^\lambda (y, \lambda s)\, ds -\chi_j^0(y)  \right)  \|_{L^2 (\mathbb{T}^d)}\\
&\quad  \le C \| 
\left\{ \int_0^1 \left( A(y, s)-\overline{A}(y) \right) \nabla \chi_j^\lambda (y, \lambda s) \, ds \right\}\|_{L^2(\mathbb{T}^d)}\\
&\quad \le C\lambda  \left(\fint_0^\lambda \!\!\!\int_{\mathbb{T}^d}
|\partial_s \nabla \chi_j^\lambda|^2\, dyds\right)^{1/2},
\endaligned
$$
where, for the last step, we have used the integration by parts as in the estimate of $I_1$.
As a result, in view of (\ref{2.32}) and (\ref{2.33}),
we have proved that
\begin{equation}\label{2.34}
| \widehat{A_\lambda} -\widehat{A_0}|
\le  C\lambda  \left(\fint_0^\lambda \!\!\!\int_{\mathbb{T}^d}
|\partial_s \nabla \chi^\lambda|^2\, dyds\right)^{1/2}.
\end{equation}

To bound the right-hand side of (\ref{2.34}),
we differentiate in $y$ the parabolic equation  for $\chi_j^\lambda$ to obtain
\begin{equation}\label{2.35}
\partial_s \nabla \chi_j^\lambda
-\text{\rm div} \big( A_\lambda \nabla (\nabla \chi_j^\lambda) \big)
=\text{\rm div}\big( \nabla A_\lambda \cdot \nabla \chi_j^\lambda\big)
+\text{\rm div} \big( \nabla A_\lambda \cdot \nabla y_j\big).
\end{equation}
By the energy estimates,
\begin{equation}\label{2.36}
\fint_0^\lambda\!\!\! \int_{\mathbb{T}^d}
|\nabla^2 \chi_j^\lambda|^2\, dyds
\le C \|\nabla A\|_\infty^2.
\end{equation}
By differentiating (\ref{2.35}) in $y$ we have
$$
\aligned
& \partial_s \nabla^2 \chi_j^\lambda
-\text{\rm div} \big (A_\lambda \nabla (\nabla^2 \chi_j^\lambda)\big)\\
& =\text{\rm div} \big( \nabla A_\lambda \cdot \nabla^2 \chi_j^\lambda\big)
+\text{\rm div} \big( \nabla^2 A_\lambda \cdot \nabla \chi_j^\lambda\big)
+\text{\rm div} \big( \nabla A_\lambda  \cdot \nabla^2 \chi_j^\lambda)
+\text{\rm div}
\big( \nabla^2 A_\lambda \cdot \nabla y_j).
\endaligned
$$
Again, by the energy estimates,
$$
\aligned
\fint_0^\lambda\!\!\! \int_{\mathbb{T}^d}
|\nabla^3 \chi_j^\lambda|^2\, dyds
 & \le C \|\nabla A\|_\infty^2 
\fint_0^\lambda\!\!\! \int_{\mathbb{T}^d}
|\nabla^2  \chi_j^\lambda|^2\, dyds
+ C\fint_0^\lambda\!\!\! \int_{\mathbb{T}^d} |\nabla^2 A_\lambda|^2 |\nabla \chi_j^\lambda|^2\, dyds\\
& \qquad +C \fint_0^\lambda\!\!\! \int_{\mathbb{T}^d}
|\nabla^2 A_\lambda|^2\, dyds\\
&\le C \left\{ \|\nabla A\|_\infty^4
+ \|\nabla^2 A\|_\infty^2 \right\}.
\endaligned
$$
It follows by the equation (\ref{2.35}) that
$$
\fint_0^\lambda \!\!\!\int_{\mathbb{T}^d} |\partial_s \nabla\chi^\lambda|^2\, dyds
\le  C \Big\{ \|\nabla A\|_\infty^4
+ \|\nabla^2 A\|_\infty^2 \Big\},
$$
which, together with (\ref{2.34}), gives (\ref{2.31}).

Finally, to see (\ref{2.30}), we let $D$ be a smooth matrix satisfying (\ref{ellipticity}) and
(\ref{periodicity}).
As in the proof of Theorem \ref{theorem-2.1},
we have
$$
\aligned
|\widehat{A_\lambda}-\widehat{A_0}|
&\le |\widehat{A_\lambda}-\widehat{D_\lambda}|
+|\widehat{D_\lambda}-\widehat{D_0}|
+|\widehat{D_0} -\widehat{A_0}|\\
& \le C \left(\int_0^1 \!\!\!\int_{\mathbb{T}^d}
|A-D|^{p_0}\, dy ds\right)^{1/p_0}
+ C \lambda \Big\{ \|\nabla^2 D\|_\infty
+ \|\nabla D\|^2_\infty\Big\}.
\endaligned
$$
By letting $\lambda \to 0$ and by approximating $A$ in the $L^{p_0}(\mathbb{T}^{d+1})$ norm 
by a sequence of smooth matrices satisfying (\ref{ellipticity}) and (\ref{periodicity}),
we conclude that $\widehat{A_\lambda} \to \widehat{A_0}$ as $\lambda \to 0$.
\end{proof}

\begin{remark}
It  follows from the proof of Theorem \ref{theorem-2.2} that if $r\ge \e$, 
\begin{equation}\label{Q-e-6}
\aligned
& \left(\fint_{Q_r}
|\nabla \chi^\lambda (y/\e, s/\e) -
\nabla \chi^0 (y/\e, s/(\lambda \e^2)) |^2\, dyds \right)^{1/2}\\
& \qquad \qquad \qquad+
\left(\fint_{Q_r}
| \chi^\lambda (y/\e, s/\e^2) -
 \chi^0 (y/\e, s/(\lambda\e^2)) |^2\, dyds \right)^{1/2}\\
& \qquad\quad
\le C \lambda
\Big\{ \|\nabla^2 A\|_\infty
+\|\nabla A\|_\infty^2 \Big\}
\endaligned
\end{equation}
for $0<\lambda<1$, where $C$ depends only on $d$ and $\mu$.
\end{remark}



\section{Approximation}\label{section-3}

Let $A_\lambda$ be the matrix given by (\ref{A-Lambda}) and
$
\mathcal{L}_{\e, \lambda} =-\text{\rm div} \big( A_\lambda (x/\e, t/\e^2)\nabla \big).
$
Let $\mathcal{L}_{0, \lambda}
=-\text{\rm div} \big( \widehat{A_\lambda} \nabla )$, where the constant matrix 
$\widehat{A_\lambda}$ is given by (\ref{2.4}).
The goal of this section is to prove the following theorem.

\begin{thm}\label{theorem-3.1}
Suppose $A$ satisfies conditions (\ref{ellipticity}) and (\ref{periodicity}).
Let $u_{\e, \lambda}$ be a weak solution of
\begin{equation}\label{3.1}
(\partial_t +\mathcal{L}_{\e, \lambda } ) u_{\e, \lambda}  =F \quad \text{ in } Q_{2r},
\end{equation}
where $r> (1+\sqrt{\lambda}) \e$ and $F\in L^p(Q_{2r})$ for some $p>d+2$.
Then there exists a weak solution of
\begin{equation}\label{3.2}
(\partial_t +\mathcal{L}_{0, \lambda} ) u_{0, \lambda}  =F \quad \text{ in } Q_r,
\end{equation}
such that
\begin{equation}\label{3.3-1}
\left(\fint_{Q_{r}} |\nabla u_{0, \lambda} |^2 \right)^{1/2}
\le C \left(\fint_{Q_{2r}} |\nabla u_{\e, \lambda} |^2 \right)^{1/2},
\end{equation}
and
\begin{equation}\label{3.3}
\aligned
 & \left(\fint_{Q_{r/2}} |\nabla u_{\e, \lambda}  -\nabla u_{0, \lambda} 
-(\nabla \chi^\lambda)^\e \nabla u_{0, \lambda}  |^2  \right)^{1/2}\\
& \le C \left(\frac{(1+\sqrt{\lambda})\e }{r} \right)^\sigma 
\left\{
\left(\fint_{Q_{2r}} |\nabla u_{\e, \lambda} |^2  \right)^{1/2}
+   r \left(\fint_{Q_{2r}} |F|^p\right)^{1/p} \right\},
\endaligned
\end{equation}
where $\sigma\in (0, 1)$ and $C>0$ depend only on $d$, $\mu$ and $p$.
\end{thm}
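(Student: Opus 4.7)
The plan is to follow the $\e$-smoothing plus dual-corrector strategy used in \cite{Geng-Shen-2017} for the case $\lambda=1$, tracking the dependence on $\lambda$ carefully so that every constant is independent of $\lambda$. I first define $u_{0,\lambda}$ as the weak solution of the Dirichlet problem $(\partial_t + \mathcal{L}_{0,\lambda})u_{0,\lambda} = F$ in $Q_r$ with $u_{0,\lambda} = u_{\e,\lambda}$ on the parabolic boundary $\partial_p Q_r$. Then $v := u_{0,\lambda} - u_{\e,\lambda}$ has zero parabolic trace and satisfies
\begin{equation*}
(\partial_t + \mathcal{L}_{0,\lambda}) v = \text{\rm div}\bigl((\widehat{A_\lambda} - A_\lambda(x/\e, t/\e^2))\nabla u_{\e,\lambda}\bigr) \quad \text{ in } Q_r.
\end{equation*}
Testing with $v$ and using the ellipticity of $\widehat{A_\lambda}$ (Lemma \ref{lemma-2.1}) together with the $L^\infty$-bound on $A_\lambda$ yields $\int_{Q_r}|\nabla v|^2 \le C\int_{Q_r}|\nabla u_{\e,\lambda}|^2$, which gives (\ref{3.3-1}) after the triangle inequality and monotonicity of the integral from $Q_r$ to $Q_{2r}$.

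Because $\widehat{A_\lambda}$ has constant coefficients and $F\in L^p$ with $p > d+2$, classical parabolic regularity gives $C^{1,\alpha}$-type bounds for $u_{0,\lambda}$ on $Q_{3r/4}$, controlled by $(\fint_{Q_r}|\nabla u_{0,\lambda}|^2)^{1/2} + r(\fint_{Q_r}|F|^p)^{1/p}$. I fix a parabolic mollifier $S_\eta$ at scale $\eta := (1+\sqrt\lambda)\e$---the minimal scale averaging over one full period of $\chi^\lambda(\cdot/\e, \cdot/\e^2)$---a cutoff $\phi\in C_c^\infty(Q_{3r/4})$ with $\phi\equiv 1$ on $Q_{r/2}$, and form the modified two-scale expansion
\begin{equation*}
w_\e := u_{\e,\lambda} - u_{0,\lambda} - \e\,\chi^\lambda(x/\e, t/\e^2)\, S_\eta(\phi\nabla u_{0,\lambda}).
\end{equation*}
Next I construct dual correctors: $(1,\lambda)$-periodic mean-zero matrices $\mathfrak{B}^\lambda_{kij}$ for $k = 0, 1, \dots, d$ satisfying
\begin{equation*}
a_{ij}^\lambda + a_{ik}^\lambda\,\partial_{y_k}\chi_j^\lambda - (\widehat{A_\lambda})_{ij} = \partial_s \mathfrak{B}_{0ij}^\lambda + \sum_{k=1}^d \partial_{y_k}\mathfrak{B}_{kij}^\lambda,
\end{equation*}
with $\mathfrak{B}_{kij}^\lambda = -\mathfrak{B}_{ikj}^\lambda$ for $k,i\ge 1$. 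Their existence and uniform-in-$\lambda$ $L^2$-bounds per period cell follow from a Hodge-type decomposition combined with the Meyers estimate in Lemma \ref{lemma-2.2}.

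Inserting $w_\e$ into $\partial_t + \mathcal{L}_{\e,\lambda}$ and using the cell problem (\ref{cell-lambda}) together with the dual-corrector identity, the leading $O(1)$ residual is rewritten as a divergence of a skew-symmetric field, which pairs harmlessly with $\nabla w_\e$ after integration by parts. The remainders split into three types: a commutator $\nabla u_{0,\lambda} - S_\eta(\phi\nabla u_{0,\lambda})$ bounded by $C\eta^\alpha[\nabla u_{0,\lambda}]_{C^\alpha}$; boundary-layer contributions supported where $\nabla\phi\ne 0$, controlled by the interior $L^\infty$-bound on $\nabla u_{0,\lambda}$; and terms of the form $\e\chi^\lambda\nabla S_\eta(\phi\nabla u_{0,\lambda})$ and $\e\chi^\lambda \partial_t S_\eta(\phi\nabla u_{0,\lambda})$. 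An energy estimate with test function $\phi^2 w_\e$, combined with the uniform bound (\ref{Q-e-1}) on $|\chi^\lambda|$ and $|\nabla\chi^\lambda|$ valid at scales $\ge 1+\sqrt\lambda$, produces $\int_{Q_{r/2}}|\nabla w_\e|^2 \le C(\eta/r)^{2\sigma}(\cdots)$ with $\sigma\in(0,1)$ depending only on $d$, $\mu$, $p$. Rearranging, $\nabla w_\e = \nabla u_{\e,\lambda} - \nabla u_{0,\lambda} - (\nabla\chi^\lambda)^\e S_\eta(\phi\nabla u_{0,\lambda}) - \e\chi^\lambda \nabla S_\eta(\phi\nabla u_{0,\lambda})$; substituting $S_\eta(\phi\nabla u_{0,\lambda})$ for $\nabla u_{0,\lambda}$ again by the commutator estimate and absorbing the lower-order terms yields (\ref{3.3}). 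The principal obstacle is maintaining uniformity in $\lambda$: the bounds on $\chi^\lambda$, $\nabla\chi^\lambda$, and $\mathfrak{B}^\lambda$ are valid only on cylinders of size $\ge 1+\sqrt\lambda$ in the intrinsic variables, which forces the smoothing scale $\eta\asymp (1+\sqrt\lambda)\e$ and in turn the factor $((1+\sqrt\lambda)\e/r)^\sigma$ in (\ref{3.3}). A second subtle point is the formally singular time-derivative $\partial_t[\e\chi^\lambda(x/\e, t/\e^2)] = \e^{-1}\partial_s\chi^\lambda$; absorbing it is the precise role of the time-component $\mathfrak{B}_0^\lambda$ of the dual corrector.
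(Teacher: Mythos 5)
Your overall strategy coincides with the paper's: define $u_{0,\lambda}$ by the Dirichlet problem on the smaller cylinder, smooth at the scale $(1+\sqrt{\lambda})\e$, introduce dual correctors with $\lambda$-uniform bounds, run an energy estimate on a corrected two-scale expansion, and convert $S_\eta(\phi\nabla u_{0,\lambda})$ back to $\nabla u_{0,\lambda}$ by interior $C^{1+\sigma}$ regularity of the homogenized solution. Two steps, however, do not go through as written. First, your cutoff $\phi$ transitions from $1$ to $0$ across an annulus of width comparable to $r$ (between $Q_{r/2}$ and $Q_{3r/4}$). On that annulus $\nabla u_{0,\lambda}-S_\eta(\phi\nabla u_{0,\lambda})$ is of size $O(1)$ relative to $\big(\fint_{Q_r}|\nabla u_{0,\lambda}|^2\big)^{1/2}$, not $O((\eta/r)^\sigma)$, so the global energy estimate (testing with $w_\e$, which does vanish on $\partial_p Q_r$ with your choice) yields no smallness; and the localized estimate with test function $\phi^2w_\e$ leaves the term $\int|w_\e|^2|\nabla\phi|^2$, whose smallness is essentially the $L^2$ rate $\|u_{\e,\lambda}-u_{0,\lambda}\|_{L^2}=O(\delta^\sigma)$ that you are in the middle of proving. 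The paper avoids this by taking the cutoff $\eta_\delta$ equal to $1$ on $Q_{1-3\delta}$ and $0$ outside $Q_{1-2\delta}$, i.e.\ a transition layer of width $\delta=(1+\sqrt{\lambda})\e$ hugging $\partial_pQ_1$; the boundary-layer error is then $\int_{Q_1\setminus Q_{1-3\delta}}|\nabla u_{0,\lambda}|^2\le C\delta^{1-2/q}\big(\fint_{Q_1}|\nabla u_{0,\lambda}|^q\big)^{2/q}$ by the Meyers higher integrability of $\nabla u_{0,\lambda}$ (see (\ref{3.42}) and (\ref{3.44})). You need this thin-layer cutoff, or an equivalent device.

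Second, the time component of the dual corrector is not handled. The identity (\ref{3.11}) reads $b_{ij}^\lambda=\partial_{y_k}\phi_{kij}^\lambda-\partial_s\phi_{i(d+1)j}^\lambda$, and only the first piece is a skew-symmetric spatial divergence that ``pairs harmlessly.'' After rescaling, the second piece becomes $\e^2\partial_t\big[(\phi^\lambda_{i(d+1)j})^\e\big]$, and pairing the corresponding residual term with the test function forces an integration by parts in $t$ that produces $\partial_t\nabla(\text{test function})$ --- not controlled in the energy class --- unless one adds the term $\e^2(\phi^\lambda_{i(d+1)j})^\e\,\partial_{x_i}K_\e(\partial_{x_j}u_{0,\lambda})$ to the expansion, as in (\ref{w-3}). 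Your $w_\e$ omits it, and asserting that absorbing the singular time derivative ``is the role of $\mathfrak{B}_0^\lambda$'' does not supply the mechanism. Relatedly, your claim of $\lambda$-uniform $L^2$ bounds per cell for all dual correctors fails for the time component: Lemma \ref{lemma-3.1} gives only $\fint|\phi^\lambda_{k(d+1)j}|^2\le C(1+\lambda)^2$ (the gradient is uniformly bounded, the function is not), and it is precisely the extra factor $\e^2$ in the added term, combined with the choice $\delta=(1+\sqrt{\lambda})\e$, that absorbs the factor $1+\lambda$ (cf.\ (\ref{3.46}) and (\ref{3.48})). Your treatment of (\ref{3.3-1}) by a direct energy argument, in place of the paper's appeal to constant-coefficient regularity, is fine.
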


We begin by introducing the dual correctors $\phi^\lambda$  for the operator $\partial_t +\mathcal{L}_{\e, \lambda}$.
Let
\begin{equation}\label{B-3}
B_\lambda =A_\lambda +A_\lambda \nabla \chi^\lambda -\widehat{A_\lambda},
\end{equation}
where the corrector  $\chi^\lambda$  is  given by (\ref{cell-lambda}).
Note that $B_\lambda$ is $(1, \lambda)$-periodic in $(y, s)$.

\begin{lemma}\label{lemma-3.1}
Let $B_\lambda =( b_{ij}^\lambda)$ be given by (\ref{B-3}).
Then there exist $(1, \lambda)$-periodic functions $\phi^\lambda_{kij}$ and $\phi^\lambda_{k(d+1)j}$, with $1\le i, j, k\le d$,  in $H^1_{loc} (\mathbb{R}^{d+1})$ such that
\begin{equation}\label{3.11}
\left\{
\aligned
b_{ij}^\lambda  & = \frac{\partial }{\partial y_k} \phi^\lambda_{kij} -\partial_s \phi^\lambda_{i (d+1)j},\\
-\chi_j^\lambda &=\frac{\partial}{\partial y_k} \phi^\lambda_{k (d+1)j}.
\endaligned
\right.
\end{equation}
Moreover, $\phi_{kij}^\lambda=-\phi_{ikj}^\lambda$ and 
\begin{align}
\fint_0^\lambda\!\!\! \int_{\mathbb{T}^d}
\big( |\phi_{kij}^\lambda|^2 +|\nabla \phi^\lambda_{k (d+1) j}|^2 \big)  & \le C,\label{3.12}\\
\fint_0^\lambda\!\!\! \int_{\mathbb{T}^d}
|\phi^\lambda_{k (d+1) j}|^2   & \le C (1+\lambda)^2, \label{3.12-1}
\end{align}
where $C$ depends only on $d$ and $\mu$.
\end{lemma}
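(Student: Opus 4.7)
The plan is to recognize the columns of $B_\lambda$ as the spatial part of a spacetime divergence-free, mean-zero vector field on the anisotropic torus $\mathbb{T}^d\times(\mathbb{R}/\lambda\mathbb{Z})$, and then construct $\phi^\lambda$ as its antisymmetric tensor potential via Fourier series. To carry this out, fix $j$ and set $V_j=(b_{1j}^\lambda,\dots,b_{dj}^\lambda,\chi_j^\lambda)$. Rewriting the cell problem (\ref{cell-lambda}) as $\partial_s\chi_j^\lambda=\partial_{y_i}(a_{ik}^\lambda\partial_{y_k}\chi_j^\lambda+a_{ij}^\lambda)$, combining with the definition (\ref{B-3}) of $b_{ij}^\lambda$, and using that $\widehat{A_\lambda}$ is constant, one obtains the spacetime divergence identity $\partial_{y_i}b_{ij}^\lambda+\partial_s\chi_j^\lambda=0$. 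The vector $V_j$ is $(1,\lambda)$-periodic, and its spacetime mean over $\mathbb{T}^d\times[0,\lambda]$ vanishes: that of $b_{ij}^\lambda$ by (\ref{2.4}) and that of $\chi_j^\lambda$ by (\ref{2.2}).

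Next, I would Fourier-expand $V_j(y,s)=\sum_{\xi\ne 0}\hat V_j(\xi)\,e^{i\xi\cdot(y,s)}$ on the torus with dual variable $\xi=(m,2\pi n/\lambda)$, $m\in\mathbb{Z}^d$, $n\in\mathbb{Z}$; divergence-freeness becomes $\xi\cdot\hat V_j(\xi)=0$. Define
\[
\hat\phi^\lambda_{\alpha\beta j}(\xi)=\frac{i}{|\xi|^2}\bigl(\xi_\beta\hat V_j^\alpha(\xi)-\xi_\alpha\hat V_j^\beta(\xi)\bigr),\qquad \alpha,\beta\in\{1,\dots,d+1\},
\]
and $\hat\phi^\lambda_{\alpha\beta j}(0)=0$. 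This tensor is antisymmetric in $(\alpha,\beta)$ by inspection, and the identity $\xi\cdot\hat V_j=0$ yields $i\xi_\alpha\hat\phi^\lambda_{\alpha\beta j}=\hat V_j^\beta$, i.e.\ $\partial_\alpha\phi^\lambda_{\alpha\beta j}=V_j^\beta$. Specialising $\beta=i\in\{1,\dots,d\}$ and using antisymmetry to rewrite $\partial_s\phi^\lambda_{(d+1)ij}=-\partial_s\phi^\lambda_{i(d+1)j}$ gives the first identity of (\ref{3.11}); specialising $\beta=d+1$ and noting $\phi^\lambda_{(d+1)(d+1)j}=0$ gives the second, up to an overall sign that can be absorbed into the final choice of $\phi^\lambda$ without disturbing antisymmetry.

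The three $L^2$ bounds then follow from Parseval with careful attention to which Fourier modes can contribute to each component. For $\phi^\lambda_{kij}$ with $k,i\le d$, the defining formula contains both $\xi_k$ and $\xi_i$ in the numerator, so purely temporal modes $\xi=(0,2\pi n/\lambda)$ contribute zero; only modes with $m\ne 0$ survive, for which $|\xi|\ge 1$, so $|\hat\phi^\lambda_{kij}(\xi)|\le C|\hat V_j(\xi)|$. The same cancellation applies to the spatial gradient $\partial_{y_l}\phi^\lambda_{k(d+1)j}$, whose Fourier coefficient $i\xi_l\hat\phi^\lambda_{k(d+1)j}(\xi)$ also vanishes when $m=0$. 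Parseval, together with the uniform estimates (\ref{2.1}) and (\ref{2.3}), then deliver the $\lambda$-independent bound (\ref{3.12}). For $\phi^\lambda_{k(d+1)j}$ itself the purely temporal modes $m=0$, $n\ne 0$ do contribute and give $|\xi|^{-1}\le\lambda/(2\pi)$, producing a Poincar\'e-type constant of order $\lambda$ and hence (\ref{3.12-1}).

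The main obstacle is the bookkeeping of the $\lambda$-scaling: one has to produce a \emph{single} antisymmetric tensor realising both identities of (\ref{3.11}) at once, which rules out the naive shortcut of first solving the second identity pointwise in $s$ via a spatial Poisson problem and only then addressing the first---the compatibility condition $\fint_{\mathbb{T}^d}(b_{ij}^\lambda+\partial_s\phi^\lambda_{i(d+1)j})\,dy=0$ at each $s$ would generally fail, since the spatial mean of $b_{ij}^\lambda(\cdot,s)$ need not vanish at each individual time. The spacetime-Fourier construction handles this coherently, and the differing $\lambda$-scalings of the components emerge automatically from whether the formula for a given component is killed on purely temporal modes.
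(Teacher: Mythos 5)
Your construction is essentially the paper's: both exploit that the columns of $B_\lambda$, augmented by the corrector, form a mean-zero, space--time divergence-free field on the $(1,\lambda)$-torus, and both produce the antisymmetric potential by inverting $|\xi|^2$ in Fourier space. (The paper routes this through scalar potentials with $\Delta_{d+1} f_{ij}^\lambda = b_{ij}^\lambda$ and $\Delta_{d+1} f_{(d+1)j}^\lambda = -\chi_j^\lambda$ plus a Liouville step, but the resulting $\phi_{kij}^\lambda = \partial_{y_k} f_{ij}^\lambda - \partial_{y_i} f_{kj}^\lambda$ is exactly your Fourier formula.) Your mode-by-mode analysis of which components vanish on purely temporal frequencies correctly reproduces the different $\lambda$-scalings in (\ref{3.12}) and (\ref{3.12-1}).

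There is, however, a sign error which you then patch incorrectly. The cell problem gives $\partial_{y_i} b_{ij}^\lambda = \partial_s \chi_j^\lambda$ (this is (\ref{3.14}) in the paper), not $\partial_{y_i} b_{ij}^\lambda + \partial_s \chi_j^\lambda = 0$; hence the space--time divergence-free field is $W_j=(b_{1j}^\lambda,\dots,b_{dj}^\lambda,\,-\chi_j^\lambda)$, not your $V_j$. Running your construction with $W_j$ yields both identities of (\ref{3.11}) with the correct signs and no leftover discrepancy. As written, your escape hatch --- absorbing ``an overall sign'' into the final choice of $\phi^\lambda$ --- does not work: flipping the sign of the whole tensor would repair the second identity of (\ref{3.11}) but break the first, since both identities involve the same components $\phi^\lambda_{i(d+1)j}$. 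The correction must be made at the level of the vector field, not of the potential; once that is done, the rest of your argument (antisymmetry, the Parseval bounds, and the $H^1_{loc}$ regularity, which follows from the same symbol estimates applied to $\nabla_{y,s}\phi^\lambda$) goes through verbatim.
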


\begin{proof}
The lemma was proved in \cite{Geng-Shen-2017} for the case $\lambda=1$.
The case $\lambda\neq 1$ is similar.
However, one  needs to be careful with the dependence of the constants $C$ on the parameter $\lambda$.

Let $\Delta_{d+1}$ denote the Laplacian operator in $\mathbb{R}^{d+1}$.
By the definition of $\widehat{A_\lambda}$,
\begin{equation}\label{3.13}
\fint_0^\lambda\!\!\! \int_{\mathbb{T}^d} B_\lambda (y, s)\, dyds =0.
\end{equation}
It follows that there exist $(1,\lambda)$-periodic functions $f_{ij}^\lambda\in H^2_{loc}(\mathbb{R}^{d+1})$  such that
$
\Delta_{d+1} f_{ij}^\lambda =b_{ij}^\lambda  \text{ in } \mathbb{R}^{d+1}
$
for $1\le i, j\le d$.
Similarly, there exist $(1, \lambda)$-periodic
functions $f^\lambda_{(d+1)j}\in H^2_{loc}(\mathbb{R}^{d+1})$ such that 
$
\Delta_{d+1} f^\lambda_{(d+1)j } =-\chi_j^\lambda  \text{ in } \mathbb{R}^{d+1}
$
for $1\le j \le d$.
By the definition of $\chi^\lambda_j$,  we have
\begin{equation}\label{3.14}
\frac{\partial}{\partial y_i} b_{ij}^\lambda =\partial_s \chi_j^\lambda \quad \text{ in } \mathbb{R}^{d+1},
\end{equation}
which leads to
$$
\Delta_{d+1} \left( \frac{\partial f_{ij}^\lambda}{\partial y_i} +\partial_s f_{(d+1)j}^\lambda\right) =0
\quad \text{ in } \mathbb{R}^{d+1}.
$$
By the periodicity and Liouville Theorem  we may conclude that
\begin{equation}\label{3.15}
 \frac{\partial f_{ij}^\lambda}{\partial y_i} +\partial_s f_{(d+1)j}^\lambda
 \quad \text{ is constant in } \mathbb{R}^{d+1} \text{ for } 1\le j\le d.
 \end{equation}
This allows us to write
$$
b_{ij}^\lambda
=\frac{\partial}{\partial y_k}
\left\{ \frac{\partial f_{ij}^\lambda}{\partial y_k} 
-\frac{\partial f^\lambda_{kj}}{\partial y_i} \right\}
+\partial_s
\left\{ \partial_s f_{ij}^\lambda - \frac{\partial f^\lambda_{(d+1) j}}{\partial y_i} \right\},
$$
and
$$
-\chi_j^\lambda =\frac{\partial}{\partial y_k} 
\left\{ \frac{\partial f_{(d+1)j}^\lambda}{\partial y_k} -\partial_s f^\lambda_{kj} \right\}.
$$
We now define $\phi_{kij}^\lambda$ and $\phi_{k (d+1)j}^\lambda$ by
\begin{equation}\label{3.16}
\left\{
\aligned
\phi_{kij}^\lambda & =\frac{\partial f_{ij}^\lambda}{\partial y_k} -\frac{\partial f_{kj}^\lambda}{\partial y_i},\\
\phi_{k(d+1)j}^\lambda & =\frac{\partial f^\lambda_{(d+1)j}}{\partial y_k} -\partial_s f_{kj}^\lambda
\endaligned
\right.
\end{equation}
for $1\le i, j, k\le d$. This gives (\ref{3.11}).
It is easy to see that $\phi_{kij}^\lambda =-\phi_{ikj}^\lambda$.

Finally, to prove estimates (\ref{3.12}) and (\ref{3.12-1}),
we use the Fourier series to write
$$
b_{ij}^\lambda (y, s)
=\sum_{\substack{ n\in \mathbb{Z}^d, m\in \mathbb{Z}\\ (n, m)\neq (0, 0) }}
a_{n, m} e^{-2\pi i n\cdot y -2\pi i m s\lambda^{-1}}.
$$
Then
$$
f_{ij}^\lambda (y, s)=-\frac{1}{4\pi^2}
\sum_{\substack{ n\in \mathbb{Z}^d, m\in \mathbb{Z}\\ (n, m)\neq (0, 0) }}
\frac{ a_{n, m}}{ |n|^2 +|m|^2 \lambda^{-2}}
 e^{-2\pi i n\cdot y -2\pi i m s\lambda^{-1}}.
$$
It follows by Parseval's Theorem  that
\begin{equation}\label{3.17}
\aligned
 \fint_0^\lambda\! \!\!\!\int_{\mathbb{T}^d} & 
\Big(  |\nabla f_{ij}^\lambda|^2 
+ |\nabla^2 f_{ij}^\lambda|^2
+ |\partial_s^2 f_{ij}^\lambda|^2
+|\nabla \partial_s f_{ij}^{\lambda}|^2 \Big)  \\
 & \le C \sum_{n, m}  |a_{n, m}|^2
  = C \fint_0^\lambda \!\!\!\!\int_{\mathbb{T}^d}
 |b_{ij}^\lambda|^2
   \le C,
 \endaligned
 \end{equation}
 where $C$ depends only on $d$ and $\mu$.
 Also note that
 \begin{equation}\label{3.18}
 \fint_0^\lambda \!\!\!\!\int_{\mathbb{T}^d}
 |\partial_s f_{ij}^\lambda|^2
 \le C \lambda^2,
 \end{equation}
 where $C$ depends only on $d$ and $\mu$.
 Similarly,  using the estimate (\ref{2.3}), we obtain 
\begin{equation}\label{3.19}
\fint_0^\lambda \!\!\!\int_{\mathbb{T}^d} 
\Big(  |\nabla f_{(d+1)j}^\lambda|^2 
+ |\nabla^2 f_{(d+1)j}^\lambda|^2
+ |\partial_s^2 f_{(d+1)j}^\lambda|^2
+|\nabla \partial_s f_{(d+1)j}^{\lambda}|^2 \Big)  
\le C.
\end{equation}
The desired estimates (\ref{3.12}) and (\ref{3.12-1})
follow readily from (\ref{3.16}), (\ref{3.17}), (\ref{3.18}) and (\ref{3.19}).
\end{proof}

Let $\varphi=\varphi (y, s)=\theta_1 (y)  \theta_2 (s)$,
where $\theta_1 \in C_0^\infty (B(0, 1))$, $\theta_2\in C^\infty_0 (-1, 0)$, $\theta_1, \theta_2\ge 0$, and
$\int_{\mathbb{R}^{d}} \theta_1 (y) \, dy =\int_{\mathbb{R}} \theta_2 (s)\, ds=1$.
Define
\begin{equation}\label{S}
S_\delta (f) (x, t) =\int_{\mathbb{R}^{d+1}} f(x-y, t-s) \varphi_\delta (y, s)\, dyds,
\end{equation}
where $\delta>0$ and $\varphi_\delta (y, s)=\delta^{-d-2} \varphi (y/\delta, s/\delta^2)$.

\begin{lemma}\label{lemma-S}
Let $g\in L^2_{loc}(\mathbb{R}^{d+1})$ and $f\in L^2(\mathbb{R}^{d+1})$.
Then
\begin{align}
\| g S_\delta (f) \|_{L^2(\mathbb{R}^{d+1})}
\le C \sup_{(y, s)\in \mathbb{R}^{d+1}}
\left(\fint_{Q_{\delta} (y, s)} |g|^2 \right)^{1/2}
\| f\|_{L^2(\mathbb{R}^{d+1})},\label{S-1}\\
\| g \nabla S_\delta (f) \|_{L^2(\mathbb{R}^{d+1})} 
\le C \delta^{-1} \sup_{(y, s)\in \mathbb{R}^{d+1}}
\left(\fint_{Q_{\delta} (y, s)} |g|^2\right)^{1/2}
\| f\|_{L^2(\mathbb{R}^{d+1})}\label{S-2},
\end{align}
where $C$ depends only on $d$.
\end{lemma}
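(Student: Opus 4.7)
The plan is to prove both inequalities by the same weighted-convolution template, pulling $|g|^2$ out as a parabolic cylinder average via Fubini. The two key properties of the mollifier I would exploit are: (i) $\varphi_\delta$ is a nonnegative probability density supported in $B(0,\delta)\times (-\delta^2, 0)$, with $\|\varphi_\delta\|_\infty \le C\delta^{-d-2}$; and (ii) since $\varphi_\delta(y,s) = \delta^{-d-2}\varphi(y/\delta, s/\delta^2)$, each spatial derivative picks up exactly one factor of $\delta^{-1}$, so $\|\nabla\varphi_\delta\|_\infty \le C\delta^{-d-3}$ and $\int |\nabla \varphi_\delta|\, dyds \le C\delta^{-1}$. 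Note that the support of $\varphi_\delta$ is (up to reflection) a translate of the parabolic cylinder $Q_\delta(0,0)$, which is what allows the estimate to be expressed in terms of averages over $Q_\delta$.

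For (\ref{S-1}), I would first use Jensen's inequality (legitimate because $\varphi_\delta$ is a probability density) to get the pointwise bound $|S_\delta(f)(x,t)|^2 \le \int |f(x-y,t-s)|^2 \varphi_\delta(y,s)\, dyds$. Multiplying by $|g(x,t)|^2$, integrating over $(x,t)$, and swapping the order of integration via Fubini after the substitution $(x',t') = (x-y, t-s)$ gives
$$ \| g S_\delta(f)\|_{L^2(\R^{d+1})}^2 \le \int |f(x',t')|^2 \left(\int |g(x'+y, t'+s)|^2 \varphi_\delta(y,s)\, dyds\right) dx'dt'. $$
Using $\varphi_\delta \le C\delta^{-d-2}\mathbf{1}_{Q_\delta}$, the inner integral is bounded by $C \fint_{Q_\delta(x',t')}|g|^2$, and hence by the stated supremum. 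Pulling the supremum out and using $\int |f|^2 = \|f\|_{L^2}^2$ yields (\ref{S-1}).

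For (\ref{S-2}), differentiation under the integral gives $\nabla S_\delta(f)(x,t) = \int f(x-y,t-s) \nabla_y \varphi_\delta(y,s)\, dyds$ (up to sign). Since $|\nabla \varphi_\delta|$ is not a probability density, I would apply Cauchy--Schwarz with weight $|\nabla\varphi_\delta|/\int|\nabla\varphi_\delta|$ to obtain
$$ |\nabla S_\delta(f)(x,t)|^2 \le \left(\int |\nabla \varphi_\delta|\right)\int |f(x-y,t-s)|^2 |\nabla \varphi_\delta(y,s)|\, dyds \le C\delta^{-1}\int |f|^2 |\nabla \varphi_\delta|. $$
Repeating the Fubini step and using the pointwise bound $|\nabla \varphi_\delta| \le C\delta^{-d-3}\mathbf{1}_{Q_\delta}$, the inner $g$-integral is controlled by $C\delta^{-1}\fint_{Q_\delta(x',t')}|g|^2$. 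Combining, the squared estimate carries a total factor $C\delta^{-2}$, which becomes the single $\delta^{-1}$ of (\ref{S-2}) after taking the square root.

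There is no serious obstacle: this is a routine convolution estimate. The only point that requires care is the bookkeeping of the $\delta$-scaling in the gradient case, where one factor of $\delta^{-1}$ comes from the Cauchy--Schwarz normalization of $|\nabla \varphi_\delta|$ and a second factor of $\delta^{-1}$ comes from its $L^\infty$ bound when converting to a $Q_\delta$-average of $|g|^2$; these two factors must not be double-counted nor overlooked.
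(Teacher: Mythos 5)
Your argument is correct and is essentially the paper's: the proof of (\ref{S-1}) is the same Jensen/H\"older-plus-Fubini computation, and your treatment of (\ref{S-2}) is exactly the ``similar manner'' the paper alludes to, with the two factors of $\delta^{-1}$ (one from $\int|\nabla\varphi_\delta|\le C\delta^{-1}$, one from $\|\nabla\varphi_\delta\|_\infty\le C\delta^{-d-3}$ versus $|Q_\delta|\sim\delta^{d+2}$) accounted for correctly.
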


\begin{proof}
By H\"older's inequality,
$$
|S_\delta (f) (x, t)|^2
\le \int_{\mathbb{R}^{d+1}} | f(y, s)|^2 \varphi_\delta (x-y, t-s)\, dyds.
$$
It follows by Fubini's Theorem that
$$
\aligned
\int_{\mathbb{R}^{d+1}} |g|^2 |S_\delta (f)|^2\, dx dt
&\le \int_{\mathbb{R}^{d+1}}
|f(y, s)|^2 \left( \int_{\mathbb{R}^{d+1}}
|g(x, t)|^2 \varphi_\delta (x-y, t-s)\, dx dt \right) dy ds\\
& \le C \sup_{(y, s)\in \mathbb{R}^{d+1}}
\left( \fint_{Q_{\delta} (y, s)} |g|^2\right)  
\| f\|^2_{L^2(\mathbb{R}^{d+1})},
\endaligned
$$
where $C$ depends only on $d$.
This gives (\ref{S-1}).
The estimate (\ref{S-2}) follows in a similar manner.
\end{proof}

\begin{lemma}\label{lemma-3.2}
Let $S_\delta$ be define by (\ref{S}).
Then
\begin{equation}\label{S-3}
\aligned
\| g \nabla f - S_\delta (g \nabla f)\|_{L^2(\mathbb{R}^{d+1})}
 & \le C \delta
\Big\{
\| \nabla (g \nabla f) \|_{L^2(\mathbb{R}^{d+1})}
+ \|g \partial_t  f\|_{L^2(\mathbb{R}^{d+1})}\\
&\qquad\quad
+ \delta \| (\partial_t g) (\nabla f) \|_{L^2(\mathbb{R}^{d+1})}
+ \delta \|(\nabla g ) \partial_t  f\|_{L^2(\mathbb{R}^{d+1})}
\Big\},
\endaligned
\end{equation}
where $C$ depends only on $d$.
\end{lemma}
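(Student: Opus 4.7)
The plan is to split the error using the factorization $\varphi_\delta (y,s)=\delta^{-d}\theta_1(y/\delta)\cdot \delta^{-2}\theta_2 (s/\delta^2)$, which means $S_\delta = S_\delta^x S_\delta^t = S_\delta^t S_\delta^x$ where $S_\delta^x$ denotes the purely spatial convolution with $\delta^{-d}\theta_1(\cdot/\delta)$ and $S_\delta^t$ the purely temporal convolution with $\delta^{-2}\theta_2(\cdot/\delta^2)$. Setting $h=g\nabla f$, write
\begin{equation*}
h - S_\delta h \;=\; (h - S_\delta^x h) \;+\; S_\delta^x (h - S_\delta^t h).
\end{equation*}
The first term produces the $\delta\|\nabla(g\nabla f)\|_{L^2}$ contribution: writing $h(x,t)-h(x-y,t)=\int_0^1 y\cdot\nabla h(x-\tau y,t)\,d\tau$ and using $|y|\le\delta$ on $\operatorname{supp}\theta_1(\cdot/\delta)$, Minkowski's inequality together with Young's convolution bound yields $\|h-S_\delta^x h\|_{L^2}\le C\delta\|\nabla h\|_{L^2}$.

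For the second term I would use the key algebraic identity
\begin{equation*}
\partial_\tau(g\nabla f) \;=\; (\partial_\tau g)\nabla f \;+\; \nabla\bigl(g\,\partial_\tau f\bigr) \;-\; (\nabla g)\,\partial_\tau f ,
\end{equation*}
which rewrites $\partial_\tau h$ so that the one occurrence of $\partial_\tau f$ that is \emph{not} paired with a derivative of $g$ sits inside a spatial divergence. Then
\begin{equation*}
h(x,t)-h(x,t-s)=\int_{t-s}^{t}\partial_\tau h(x,\tau)\,d\tau \;=\; A_1(x,t;s)+A_2(x,t;s)+A_3(x,t;s)
\end{equation*}
with $A_1$, $A_3$ being the pieces involving $(\partial_t g)\nabla f$ and $(\nabla g)\partial_t f$, and $A_2$ the piece containing $\nabla(g\,\partial_\tau f)$. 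Integrating against $\delta^{-2}\theta_2(s/\delta^2)\,ds$ and using $|s|\le\delta^2$ on $\operatorname{supp}\theta_2(\cdot/\delta^2)$, a standard one-dimensional maximal-function bound in $t$ (which is $L^2$-bounded) gives
\begin{equation*}
\bigl\|S_\delta^x(A_1\text{-term})\bigr\|_{L^2}\le C\delta^2\|(\partial_t g)\nabla f\|_{L^2}, \qquad \bigl\|S_\delta^x(A_3\text{-term})\bigr\|_{L^2}\le C\delta^2\|(\nabla g)\partial_t f\|_{L^2},
\end{equation*}
which match the last two terms of the stated inequality.

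The step I expect to be the main obstacle is the $A_2$ piece, because the naïve estimate would cost a factor $\|\nabla(g\partial_t f)\|_{L^2}$ that does not appear on the right-hand side. The trick is to exploit the outer operator $S_\delta^x$: since the $\nabla$ in $A_2$ is a spatial divergence, one integrates by parts in $y$ against $\theta_1(y/\delta)\delta^{-d}$, transferring the derivative onto the mollifier. This costs a factor $\delta^{-1}$ (coming from $\nabla\theta_1(\cdot/\delta)$), but is more than compensated by the $\delta^2$ gained from the $\tau$-integration of length $|s|\le\delta^2$; the net size is $\delta^{2-1}=\delta$, and the bound
\begin{equation*}
\bigl\|S_\delta^x(A_2\text{-term})\bigr\|_{L^2}\le C\delta\,\|g\,\partial_t f\|_{L^2}
\end{equation*}
follows from Young's inequality applied to the (now $L^1$-normalized) convolution kernel together with the same temporal maximal-function bound. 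Adding the four contributions gives \eqref{S-3}.
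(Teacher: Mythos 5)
Your proposal is correct and follows essentially the same route as the paper's proof: the splitting into spatial and temporal mollification errors, the identity $\partial_t(g\nabla f)=(\partial_t g)\nabla f+\nabla(g\,\partial_t f)-(\nabla g)\partial_t f$, and the integration by parts of the spatial divergence onto the mollifier (costing $\delta^{-1}$ against the gain of $\delta^2$) are exactly the paper's argument, phrased there via the operator bounds $\|S^1_\delta(h)\|\le\|h\|$ and $\|\nabla S^1_\delta(h)\|\le C\delta^{-1}\|h\|$. The only cosmetic difference is that you prove the elementary mollification estimates by Taylor expansion and Minkowski rather than by Plancherel.
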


\begin{proof}
Write $S_\delta =S_\delta^1 S_\delta^2$, where
\begin{equation}\label{S-1-2}
\left\{
\aligned
 & S_\delta^1 (f) (x, t)=\int_{\mathbb{R}^d} f(x-y, t) \delta^{-d} \theta_1 (y/\delta)\, dy,\\
& S_\delta^2 (f) (x, t)=\int_{\mathbb{R}}  f(x, t-s) \delta^{-2} \theta_2 (s/\delta^2)\, ds.
\endaligned
\right.
\end{equation}
By using the Plancherel Theorem, it is easy to see that 
$$
\left\{
\aligned
\| f -S_\delta^1 (f)\|_{L^2(\mathbb{R}^{d+1})}
& \le C \delta \| \nabla f\|_{L^2(\mathbb{R}^{d+1})},\\
\| f -S_\delta^2 (f)\|_{L^2(\mathbb{R}^{d+1})}
& \le C \delta^2 \| \partial_t f\|_{L^2(\mathbb{R}^{d+1})},
\endaligned
\right.
$$
where $C$ depends only on $d$.
It follows that
$$
\aligned
\| g \nabla f -S_\delta (g \nabla f) \|_{L^2(\mathbb{R}^{d+1})}
& \le \| g \nabla f -S_\delta^1 ( g\nabla f) \|_{L^2(\mathbb{R}^{d+1})}
+ \| S_\delta^1 ( g \nabla f) - S_\delta ( g\nabla f) \|_{L^2(\mathbb{R}^{d+1})}\\
&\le C \delta \| \nabla (g \nabla f ) \|_{L^2(\mathbb{R}^{d+1})}
+ C \delta^2 \|\partial_t S_\delta^1 (g\nabla f) \|_{L^2(\mathbb{R}^{d+1})}.
\endaligned
$$
To bound the last term in the inequalities above, we note that
$$
\partial_t (g \nabla f)
= (\partial_t g)  \nabla  f
+\nabla (g \partial_t f)
- (\nabla g) \partial_t f.
$$
Using the estimates 
$$
\| S_\delta^1 (h)\|_{L^2(\mathbb{R}^{d+1})}
\le  \| h\|_{L^2(\mathbb{R}^{d+1})} \quad \text{ and } \quad
\|\nabla S_\delta^1 (h)\|_{L^2(\mathbb{R}^{d+1})}
\le C \delta^{-1} \| h\|_{L^2(\mathbb{R}^{d+1})},
$$
we obtain 
$$
\| \partial_t S_\delta^1 (g \nabla f)\|_{L^2(\mathbb{R}^{d+1})}
\le \| (\partial_t g ) \nabla f\|_{L^2(\mathbb{R}^{d+1})}
+ C \delta^{-1} \| g \partial_t f\|_{L^2(\mathbb{R}^{d+1})}
+ \| (\nabla g) \partial_t  f\|_{L^2(\mathbb{R}^{d+1})}.
$$
This completes the proof.
\end{proof}

Let
\begin{equation}\label{w-3}
w_\e
=u_{\e, \lambda}  -u_{0, \lambda}  - \e ( \chi_j^\lambda)^\e K_\e \left(\frac{\partial u_{0, \lambda} }{\partial x_j}\right)
+\e^2 \left( \phi_{i (d+1) j}^\lambda\right)^\e \frac{\partial }{\partial x_i} 
K_\e \left(\frac{\partial u_{0, \lambda} }{\partial x_j} \right),
\end{equation}
where  
$$
(\chi_j^\lambda)^\e =\chi_j^\lambda (x/\e, t/\e^2),\ \ 
\ (\phi_{i (d+1)j}^\lambda)^\e = \phi_{i (d+1) j}^\lambda (x/\e, t/\e^2),
$$
 and $K_\e$ is a linear operator to be specified later .

\begin{lemma}\label{lemma-3.3}
Suppose that
$$
(\partial_t +\mathcal{L}_{\e, \lambda })u_{\e, \lambda}
=
(\partial_t +\mathcal{L}_{0, \lambda} ) u_{0, \lambda}  \quad \text{\ in } \Omega \times (T_0, T_1).
$$
Let $w_\e$ be defined by (\ref{w-3}).
Then
\begin{equation}\label{w-3.1}
\aligned
   (\partial_t +\mathcal{L}_{\e, \lambda}) w_\e
& = -\text{\rm div} \left( \big( \widehat{A_\lambda} -A_\lambda (x/\e, t/\e^2)\big)
\big(\nabla u_{0, \lambda}  -K_\e (\nabla u_{0, \lambda}) \big)\right)\\
&\quad +\e\,  \text{\rm div}
\Big( A_\lambda (x/\e, t/\e^2)\chi^\lambda (x/\e, t/\e^2) \nabla K_\e (\nabla u_{0, \lambda} ) \Big)\\
&\quad +\e\,  \frac{\partial }{\partial x_k}
\left\{ \phi_{kij}^\lambda (x/\e, t/\e^2) \frac{\partial }{\partial x_i} K_\e \left(\frac{\partial u_{0, \lambda} }{\partial x_j}\right) \right\}\\
&\quad + \e^2 \frac{\partial}{\partial x_k}
\left\{ \phi^\lambda_{k (d+1) j} (x/\e, t/\e^2) \partial_t K_\e \left(\frac{\partial u_{0, \lambda} }{\partial x_j}\right) \right\}\\
&\quad -\e \, \frac{\partial}{\partial x_i}
\left\{ a_{ij}^\lambda (x/\e, t/\e^2) \left( \frac{\partial}{\partial x_j} \phi_{\ell (d+1) k}^\lambda \right) (x/\e, t/\e^2)
\frac{\partial}{\partial x_\ell}
K_\e \left( \frac{\partial u_{0, \lambda} }{\partial x_k} \right) \right\}\\
&\quad -\e^2\frac{\partial}{\partial x_i}
\left\{ a_{ij}^\lambda (x/\e, t/\e^2) \phi^\lambda_{\ell (d+1) k }(x/\e, t/\e^2)
\frac{\partial^2}{\partial x_j \partial x_\ell}
K_\e \left( \frac{\partial u_{0, \lambda} }{\partial x_k} \right) \right\},
\endaligned
\end{equation}
where $A_\lambda = \big( a_{ij}^\lambda \big)$.
\end{lemma}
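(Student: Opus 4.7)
The proof is a direct (if lengthy) computation: I would apply $\partial_t + \mathcal{L}_{\e,\lambda}$ term-by-term to the four pieces of $w_\e$ and show that all the formally singular powers of $\e^{-1}$ cancel, leaving exactly the six divergence-form remainders listed in (\ref{w-3.1}). Throughout, write $A_\lambda^\e = A_\lambda(x/\e, t/\e^2)$, $v_j = K_\e(\partial u_{0,\lambda}/\partial x_j)$, and similarly for other $\e$-rescaled quantities.

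The first step uses the given equation to write
\[
(\partial_t + \mathcal{L}_{\e,\lambda})(u_{\e,\lambda} - u_{0,\lambda}) = \text{\rm div}\big((A_\lambda^\e - \widehat{A_\lambda})\nabla u_{0,\lambda}\big),
\]
and splits $\nabla u_{0,\lambda} = (\nabla u_{0,\lambda} - K_\e(\nabla u_{0,\lambda})) + K_\e(\nabla u_{0,\lambda})$. The first summand produces the first target term. For the remaining $\text{\rm div}((A_\lambda^\e - \widehat{A_\lambda}) K_\e(\nabla u_{0,\lambda}))$, I would use the definition (\ref{B-3}) of $B_\lambda$, i.e.\ $a_{ij}^\lambda - \widehat{a_{ij}^\lambda} = b_{ij}^\lambda - a_{ik}^\lambda \partial_{y_k}\chi_j^\lambda$, to split it into a $B_\lambda^\e$ piece and a $-\partial_{x_i}[(a_{ik}^\lambda)^\e (\partial_{y_k}\chi_j^\lambda)^\e v_j]$ piece.

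The second step handles the corrector term $-\e (\chi_j^\lambda)^\e v_j$ via the product rule. The scaled cell equation gives $(\partial_t + \mathcal{L}_{\e,\lambda})(\chi_j^\lambda)^\e = \e^{-2}(\text{\rm div}_y A_\lambda \nabla y_j)^\e$, so the $\e \cdot \e^{-2}$ factor produces an $\e^{-1}$ term that is meant to be cancelled by the $-\partial_{x_i}[(a_{ik}^\lambda)^\e (\partial_{y_k}\chi_j^\lambda)^\e v_j]$ from Step~1 — this cancellation is the core of the identity, and after it one is left with exactly the $\e\,\text{\rm div}(A_\lambda^\e (\chi^\lambda)^\e \nabla v_j)$ (the second target) together with a leftover $\e (\chi_j^\lambda)^\e \partial_t v_j$ piece. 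Using the dual identity $-\chi_j^\lambda = \partial_{y_k}\phi_{k(d+1)j}^\lambda$, i.e.\ $-(\chi_j^\lambda)^\e = \e \partial_{x_k}(\phi_{k(d+1)j}^\lambda)^\e$, this last piece rewrites as $-\e^2 \partial_{x_k}[(\phi_{k(d+1)j}^\lambda)^\e \partial_t v_j]$ plus $\e^2(\phi_{k(d+1)j}^\lambda)^\e \partial_t \partial_{x_k} v_j$, and combines naturally with the fourth target.

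The third step treats the $B_\lambda^\e$ divergence and the remaining corrector term $\e^2(\phi_{i(d+1)j}^\lambda)^\e \partial_i v_j$ in tandem. Substituting $b_{ij}^\lambda = \partial_{y_k}\phi_{kij}^\lambda - \partial_s \phi_{i(d+1)j}^\lambda$ in scaled form yields $(b_{ij}^\lambda)^\e = \e \partial_{x_k}(\phi_{kij}^\lambda)^\e - \e^2 \partial_t(\phi_{i(d+1)j}^\lambda)^\e$. Then
\[
\partial_{x_i}[(b_{ij}^\lambda)^\e v_j] = \e \partial_{x_i}\partial_{x_k}[(\phi_{kij}^\lambda)^\e v_j] - \e \partial_{x_i}[(\phi_{kij}^\lambda)^\e \partial_{x_k} v_j] + (\text{time terms}).
\]
The antisymmetry $\phi_{kij}^\lambda = -\phi_{ikj}^\lambda$ kills the double-derivative term after summation over $i,k$; swapping dummy indices in the remaining term produces exactly the third target $\e\,\partial_{x_k}[(\phi_{kij}^\lambda)^\e \partial_{x_i} v_j]$. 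The product-rule expansion of $(\partial_t + \mathcal{L}_{\e,\lambda})(\e^2(\phi_{i(d+1)j}^\lambda)^\e \partial_i v_j)$, together with the time term left over from the $B_\lambda^\e$ expansion, produces the fourth, fifth, and sixth targets, once one notes that the $\partial_t (\phi_{i(d+1)j}^\lambda)^\e$ contribution is of order $\e^{-2}$, so that $\e^2 \partial_t$ times the corrector is $O(1)$ and reassembles with the product-rule cross terms.

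The main obstacle is purely organizational: four products, each yielding several pieces under $\partial_t - \text{\rm div}(A_\lambda^\e \nabla)$, give on the order of a dozen terms that must be regrouped. The only nontrivial algebraic inputs are (i) the cancellation of $\partial_{x_i}[(a_{ik}^\lambda)^\e (\partial_{y_k}\chi_j^\lambda)^\e v_j]$ between Steps 1 and 2, which is built into the definition of $B_\lambda$; (ii) the antisymmetry of $\phi_{kij}^\lambda$ in $(i,k)$, which removes the only term with two spatial derivatives falling on the dual corrector; and (iii) the identity $-\chi_j^\lambda = \partial_{y_k}\phi_{k(d+1)j}^\lambda$, which lets the leftover $\e \chi_j^{\lambda,\e} \partial_t v_j$ be absorbed into a $\phi^\lambda_{k(d+1)j}$ divergence. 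With these cancellations carried out carefully, the identity (\ref{w-3.1}) follows.
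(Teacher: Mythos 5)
Your overall strategy is exactly the paper's: the lemma is proved by a direct computation (the paper simply refers to the case $\lambda=1$ in \cite{Geng-Shen-2017}), and you have correctly identified the structural inputs --- the splitting $a_{ij}^\lambda-\widehat{a_{ij}^\lambda}=b_{ij}^\lambda-a_{ik}^\lambda\partial_{y_k}\chi_j^\lambda$, the antisymmetry $\phi_{kij}^\lambda=-\phi_{ikj}^\lambda$, and the identity $-\chi_j^\lambda=\partial_{y_k}\phi^\lambda_{k(d+1)j}$ --- as well as how the third, fifth and sixth terms of (\ref{w-3.1}) arise.

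There is, however, one concrete slip in your Step 2 which, as written, would leave the final identity off by a nonzero term. Writing $v_j=K_\e(\partial u_{0,\lambda}/\partial x_j)$ as in your proposal, the $\e^{-1}$-order contribution $-\e^{-1}v_j\big(\text{\rm div}_y(A_\lambda\nabla y_j)\big)^\e$ coming from the scaled cell equation and the Step-1 term $-\partial_{x_i}\big[(a_{ik}^\lambda)^\e(\partial_{y_k}\chi_j^\lambda)^\e v_j\big]$ do \emph{not} cancel. After expanding the latter divergence and absorbing the product-rule cross term $(a_{ik}^\lambda)^\e(\partial_{y_k}\chi_j^\lambda)^\e\partial_{x_i}v_j$, their sum equals $-\e^{-1}v_j\big(\text{\rm div}_y(A_\lambda\nabla y_j)+\text{\rm div}_y(A_\lambda\nabla\chi_j^\lambda)\big)^\e=-\e^{-1}v_j(\partial_s\chi_j^\lambda)^\e=-\e\,v_j\,\partial_t\big[(\chi_j^\lambda)^\e\big]$. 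In the elliptic setting this would indeed vanish, but here the time derivative in the cell problem (\ref{cell-lambda}) survives; this residual is the whole reason the representation (\ref{3.11}) of $b_{ij}^\lambda$ carries the extra term $-\partial_s\phi^\lambda_{i(d+1)j}$ (equivalently, the relation $\partial_{y_i}b_{ij}^\lambda=\partial_s\chi_j^\lambda$ in (\ref{3.14})). Concretely, expanding $-\e^2\partial_{x_i}\big[\partial_t\big[(\phi^\lambda_{i(d+1)j})^\e\big]v_j\big]$ in your Step 3 yields $+\e\,v_j\,\partial_t\big[(\chi_j^\lambda)^\e\big]$, which cancels the residual above, together with $-\e^2\,\partial_t\big[(\phi^\lambda_{i(d+1)j})^\e\big]\partial_{x_i}v_j$, which cancels the term arising when $\partial_t$ falls on $(\phi^\lambda_{i(d+1)j})^\e$ in the last piece of $w_\e$. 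Once you track $-\e\,v_j\,\partial_t\big[(\chi_j^\lambda)^\e\big]$ alongside your $-\e(\chi_j^\lambda)^\e\partial_t v_j$, the remaining bookkeeping closes up and gives (\ref{w-3.1}) exactly.
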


\begin{proof}
This is proved by a direct computation.
See \cite[Theorem 2.2]{Geng-Shen-2017} for the case $\lambda=1$.
\end{proof}

\begin{lemma}\label{lemma-3.4}
Let $Q_r=B(0, r) \times (-r^2, 0)$.
Suppose $u_{\e, \lambda}$ is a weak solution of
$( \partial_t +\mathcal{L}_{\e, \lambda} ) u_{\e, \lambda} =F$ in $Q_2$
for some $F\in L^2(Q_2)$.
Then there exists a weak solution  of
$(\partial_t +\mathcal{L}_{0, \lambda} ) u_{0, \lambda} =F$ in $Q_1$
such that
\begin{equation}\label{3.40-1}
\left(\fint_{Q_1} |\nabla u_{0, \lambda}|^2 \right)^{1/2}
\le C \left(\fint_{Q_2} |\nabla u_{\e, \lambda} |^2 \right)^{1/2},
\end{equation}
and for $\delta = (1+\sqrt{\lambda} ) \e$,
\begin{equation}\label{3.40}
\aligned
& \left(\fint_{Q_1}\big |\nabla  \Big( u_{\e, \lambda} - u_{0, \lambda} -\e \chi^\lambda (x/\e, t/\e^2) K_\e (\nabla u_{0, \lambda} )\Big)  \big|^2\, dx dt  \right)^{1/2}\\
& \le C \delta^\sigma \left\{
\left(\fint_{Q_2} |\nabla u_{\e, \lambda}|^2 \right)^{1/2}
+   \left(\fint_{Q_2} |F|^2 \right)^{1/2} 
\right\},
\endaligned
\end{equation}
where 
$\sigma \in (0, 1)$ and $C>0$ depend only on $d$ and $\mu$.
The operator $K_\e$ is defined by (\ref{K}).
\end{lemma}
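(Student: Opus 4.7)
The plan is to first construct $u_{0,\lambda}$ by solving the initial-Dirichlet problem for the constant-coefficient operator $\partial_t + \mathcal{L}_{0,\lambda}$ on an intermediate cylinder $Q_{3/2}$, with right-hand side $F$ and parabolic-boundary data $u_{\e,\lambda}\big|_{\partial_p Q_{3/2}}$. Existence and uniqueness are standard given the ellipticity of $\widehat{A_\lambda}$ from Lemma~\ref{lemma-2.1}. The difference $u_{\e,\lambda} - u_{0,\lambda}$ vanishes on $\partial_p Q_{3/2}$ and satisfies a parabolic equation whose right-hand side is $\text{\rm div}\big((A_\lambda(x/\e,t/\e^2) - \widehat{A_\lambda})\nabla u_{\e,\lambda}\big)$, so the standard parabolic energy inequality combined with $\|A_\lambda\|_\infty,|\widehat{A_\lambda}|\le C$ yields $\|\nabla u_{0,\lambda}\|_{L^2(Q_{3/2})} \le C\|\nabla u_{\e,\lambda}\|_{L^2(Q_{3/2})} + C\|F\|_{L^2(Q_{3/2})}$. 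Restricting to $Q_1$ and taking averages gives (\ref{3.40-1}).

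Having fixed $u_{0,\lambda}$, I would define $w_\e$ as in (\ref{w-3}) with $K_\e$ the smoothing operator of (\ref{K}) acting at scale $\delta = (1+\sqrt{\lambda})\e$, and apply Lemma~\ref{lemma-3.3} to express $(\partial_t + \mathcal{L}_{\e,\lambda})w_\e$ as a sum of six divergence-form sources $\text{\rm div}(G_1)+\cdots+\text{\rm div}(G_6)$. Testing this equation against $\eta^2 w_\e$, with $\eta$ a parabolic cut-off supported in $Q_{5/4}$ and identically $1$ on $Q_1$, and using the ellipticity of $A_\lambda$, produces a Caccioppoli-type bound $\|\eta\nabla w_\e\|_{L^2}^2 \le C\sum_i \|G_i\|_{L^2(Q_{5/4})}^2$ up to lower-order contributions from $\nabla\eta$.

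Each $G_i$ is then controlled by combining a stationary $L^2$-average estimate on its oscillatory factor with a smoothing estimate on its non-oscillatory factor. The $Q_\delta$-averages of $\widehat{A_\lambda}-A_\lambda(x/\e,t/\e^2)$, $A_\lambda(x/\e,t/\e^2)\chi^\lambda(x/\e,t/\e^2)$, $\phi_{kij}^\lambda(x/\e,t/\e^2)$, and $\phi_{k(d+1)j}^\lambda(x/\e,t/\e^2)$ are bounded via Lemma~\ref{lemma-3.1} together with (\ref{Q-e-1}), (\ref{3.12}), and (\ref{3.12-1}); the last yields growth $C(1+\lambda)$, but the $\e^2$ prefactor in $w_\e$ absorbs this because $\e^2(1+\lambda) \le (1+\sqrt{\lambda})^2\e^2 = \delta^2$. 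The smoothed factors $\nabla u_{0,\lambda}-K_\e(\nabla u_{0,\lambda})$, $\nabla K_\e(\nabla u_{0,\lambda})$, and $\partial_t K_\e(\nabla u_{0,\lambda})$ are handled by Lemma~\ref{lemma-3.2}: the first gives a factor $\delta$ times $\|\nabla^2 u_{0,\lambda}\|_{L^2}+\|\partial_t u_{0,\lambda}\|_{L^2}$, while the remaining two give factors $\delta^{-1}$ and $\delta^{-2}$ that exactly cancel the $\e$ and $\e^2$ prefactors of $w_\e$. Summing produces a bound of the form $\|\eta\nabla w_\e\|_{L^2}^2 \le C\delta^2\big(\|\nabla^2 u_{0,\lambda}\|_{L^2(Q_{5/4})}^2 + \|\partial_t u_{0,\lambda}\|_{L^2(Q_{5/4})}^2 + \|\nabla u_{0,\lambda}\|_{L^2(Q_{5/4})}^2\big)$.

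The main obstacle is converting this into the $\delta^\sigma$ estimate in (\ref{3.40}), since $\|\nabla^2 u_{0,\lambda}\|_{L^2(Q_{5/4})}$ and $\|\partial_t u_{0,\lambda}\|_{L^2(Q_{5/4})}$ are only bounded by $\|\nabla u_{0,\lambda}\|_{L^2(Q_{3/2})} + \|F\|_{L^2(Q_{3/2})}$ through a constant-coefficient interior regularity estimate whose constant blows up near $\partial_p Q_{3/2}$. I would close the argument by interpolating between the honest $O(\delta)$ gain on a compact subset of $Q_{3/2}$ and the a priori $O(1)$ bound (\ref{3.40-1}), using a Meyers-type reverse-H\"older exponent $q>2$ for $\nabla u_{\e,\lambda}$ in the spirit of Lemma~\ref{lemma-2.2}; the resulting $\sigma\in(0,1)$ depends only on $d$ and $\mu$. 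To finish, I would pass from the bound on $\nabla w_\e$ to the one for $\nabla\big(u_{\e,\lambda} - u_{0,\lambda} - \e\chi^\lambda(x/\e,t/\e^2) K_\e(\nabla u_{0,\lambda})\big)$ by directly estimating the extra term $\e^2\phi_{i(d+1)j}^\lambda(x/\e,t/\e^2)\partial_i K_\e(\partial_j u_{0,\lambda})$ in $H^1$-norm via (\ref{3.12}), (\ref{3.12-1}), and Lemma~\ref{lemma-3.2}, once more invoking $\e^2(1+\lambda)\le C\delta^2$.
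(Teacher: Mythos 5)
Your construction of $u_{0,\lambda}$ and your use of Lemma \ref{lemma-3.3} are in the right spirit, but the way you propose to close the energy estimate has a genuine gap. Testing $(\partial_t+\mathcal{L}_{\e,\lambda})w_\e=\sum_i \mathrm{div}(G_i)$ against $\eta^2 w_\e$ with an interior cut-off produces terms of the form $\int |\nabla\eta|^2|w_\e|^2$ and $\int|\partial_t(\eta^2)|\,|w_\e|^2$, and these are not ``lower-order'': controlling them requires $\|w_\e\|_{L^2(Q_{5/4})}=O(\delta^{\sigma})$, i.e.\ an $L^2$ convergence rate for $u_{\e,\lambda}-u_{0,\lambda}$, which is precisely what one does not yet have at this stage (it is proved later, by duality, \emph{from} the $H^1$ estimate). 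The a priori bound on $\|w_\e\|_{L^2}$ available from the energy inequality is only $O(1)$, so your Caccioppoli bound degenerates to $\|\eta\nabla w_\e\|_{L^2}\le C$ and yields no power of $\delta$. Your closing paragraph (``interpolating between the honest $O(\delta)$ gain on a compact subset and the a priori $O(1)$ bound'') gestures at the right ingredients but is not an argument: an $O(1)$ contribution from a region of measure $O(\delta)$ only becomes $O(\delta^{\sigma})$ after you invoke higher integrability of $\nabla u_{0,\lambda}$ and H\"older on that thin set, and this has to be wired into the main estimate, not appended afterwards.

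The paper's proof avoids the problem structurally: it solves the homogenized problem on $Q_1$ itself with boundary data $u_{\e,\lambda}$ on $\partial_p Q_1$, and builds the boundary cut-off $\eta_\delta$ (supported in $Q_{1-2\delta}$, equal to $1$ on $Q_{1-3\delta}$) \emph{inside} the operator $K_\e(f)=S_\delta(\eta_\delta f)$. Then $w_\e$ vanishes identically on $\partial_p Q_1$, the global energy identity applies with $\int_0^T\langle\partial_t w_\e,w_\e\rangle\ge 0$, and no $\|w_\e\|_{L^2}$ term ever appears. The price is a boundary-layer term $\int_{Q_1\setminus Q_{1-3\delta}}|\nabla u_{0,\lambda}|^2$ and factors $\delta^{-1},\delta^{-2}$ from $\nabla\eta_\delta,\partial_t\eta_\delta$; these are absorbed by first establishing the reverse-H\"older bound (\ref{3.42}) for $\nabla u_{0,\lambda}$ (via Meyers for $\nabla u_{\e,\lambda}$ and $W^{1,q}$ estimates for the constant-coefficient problem) and then applying H\"older on the layer of measure $O(\delta)$, which is the sole source of the exponent $\sigma=\tfrac12-\tfrac1q$. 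The weighted interior estimate
$\int_{Q_{1-2\delta}}(|\nabla^2 u_{0,\lambda}|^2+|\partial_t u_{0,\lambda}|^2)\le C\int_{Q_{1-\delta}}|\nabla u_{0,\lambda}|^2\,\mathrm{dist}_p(\cdot,\partial_p Q_1)^{-2}+C\int_{Q_1}|F|^2$
then converts the $\delta^2$ prefactors into $\delta^{1-2/q}$ by the same layer-plus-H\"older mechanism. If you rework your argument so that the cut-off lives inside $K_\e$ and $w_\e$ has zero parabolic boundary trace, the rest of your outline (the $Q_\delta$-averages of the oscillating factors, the cancellation $\e^2(1+\lambda)\le\delta^2$, and the final removal of the $\e^2\phi^\lambda_{i(d+1)j}$ term) goes through as you describe.
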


\begin{proof}
We start out by defining $u_{0, \lambda} $ to be the  weak solution of the initial-Dirichlet problem:
\begin{equation}\label{3.41}
\left\{
\aligned
(\partial_t + \mathcal{L}_{0, \lambda} ) u_{0, \lambda}  &=F & \quad & \text{ in } Q_{1},\\
u_{0, \lambda} & =u_{\e, \lambda} & \quad & \text{ on } \partial_p Q_{1},
\endaligned
\right.
\end{equation}
where $\partial_p Q_{1}$ denotes the parabolic boundary of the cylinder $Q_{1}$.
Note that
$$
(\partial_t +\mathcal{L}_{0, \lambda} ) (u_{0, \lambda} -u_{\e, \lambda} )
=(\mathcal{L}_{\e, \lambda} -\mathcal{L}_{0, \lambda}) u_{\e, \lambda}
$$
in $Q_{1}$ and $u_{\e, \lambda}  -u_{0, \lambda} =0$ on $\partial_p Q_{1}$.
It follows from the standard regularity estimates for parabolic operators with constant coefficients that
$$
\fint_{Q_{1} }|\nabla (u_{\e, \lambda} -u_{0, \lambda})|^q 
\le C \fint_{Q_{1}} |\nabla u_{\e, \lambda} |^q 
$$
for any $2\le q<\infty$, where $C$ depends only on $d$, $\mu$ and $q$.
This gives 
$$
\fint_{Q_{1} }|\nabla u_{0, \lambda} |^q 
\le C \fint_{Q_{1}} |\nabla u_{\e, \lambda} |^q 
$$
for any $2<q<\infty$.
By the Meyers-type estimates for parabolic systems \cite[Appendix]{Armstrong-2018},
there exist some $q>2$ and $C >0$, depending on $d$ and $\mu$, such that
$$
\left(\fint_{Q_{1}} |\nabla u_{\e, \lambda} |^q \right)^{1/q}
\le C \left\{ \left(\fint_{Q_2} |\nabla u_{\e, \lambda}|^2\right)^{1/2}
+ \left(\fint_{Q_2} |F|^2 \right)^{1/2} \right\}.
$$
As a result, we obtain 
\begin{equation}\label{3.42}
\left(\fint_{Q_{1}} |\nabla u_{0, \lambda}|^q  \right)^{1/q}
\le  C \left\{ \left(\fint_{Q_2} |\nabla u_{\e, \lambda} |^2\right)^{1/2}
+ \left(\fint_{Q_2} |F|^2 \right)^{1/2} \right\}
\end{equation}
for some $q>2$ and $C>0$, depending only on $d$ and $\mu$.

To prove (\ref{3.40}), we let $\delta=(1+\sqrt{\lambda})\e $.
We may assume 
$\delta\le 1/8$;
for otherwise the estimate is trivial. Choose $\eta_\delta\in C_0^\infty(\mathbb{R}^{d+1})$ such that
$0\le \eta_\delta\le 1$, $\ |\nabla \eta_\delta|\le C/\delta$, 
$ |\partial_t \eta_\delta| +|\nabla^2 \eta_\delta|\le C /\delta^2$,
$$
\eta_\delta =1 \quad \text{ in } Q_{1-3\delta} \quad \text{ and } \quad 
\eta_\delta=0  \quad \text { in } Q_1 \setminus Q_{1-2\delta}.
$$
Let $w_\e$ be defined by (\ref{w-3}), where the operator $K_\e$ is given by
\begin{equation}\label{K}
K_\e (f) = S_\delta ( \eta_\delta f)
\end{equation}
with $S_\delta$ defined  in (\ref{S}).
Note that $w_\e=0$ in $\partial_p Q_1$.
It follows from Lemma \ref{lemma-3.3} and energy estimates that
\begin{equation}\label{3.43}
\aligned
\int_{Q_1}|\nabla w_\e|^2
& \le C \int_{Q_1} |\nabla u_{0, \lambda} -K_\e (\nabla u_{0, \lambda}) |^2
+ C \e^2  \int_{Q_1} |( \chi^\lambda)^\e \nabla K_\e (\nabla u_{0, \lambda})|^2\\
&\qquad + C \e^2 \int_{Q_1}
\sum_{k,i,j} |(\phi_{kij}^\lambda)^\e|^2 |\nabla K_\e (\nabla u_{0, \lambda})|^2\\
&\qquad + C\e^4 \int_{Q_1}\sum_{k, j}  |(\phi_{k (d+1)j}^\lambda)^\e|^2 |\partial_t K_\e (\nabla u_{0, \lambda} )|^2\\
& \qquad + C \e^2 \int_{Q_1}
\sum_{\ell, k} | (\nabla \phi_{\ell (d+1) k} ^\lambda)^\e|^2 |\nabla K_\e (\nabla u_{0, \lambda} )|^2\\
& \qquad + C\e^4 \int_{Q_1}
\sum_{\ell, k} |(\phi_{\ell (d+1) k}^\lambda)^\e|^2 
|\nabla^2 K_\e (\nabla u_{0, \lambda} )|^2\\
&=I_1 +I_2 +I_3 +I_4 +I_5+I_6.
\endaligned
\end{equation}
To bound $I_1$, we use Lemma \ref{lemma-3.2}.
This gives
$$
\aligned
I_1 & \le 
2\int_{Q_1} |\nabla u_{0, \lambda} -\eta_\delta  (\nabla u_{0, \lambda})|^2
+ 2\int_{Q_1} |\eta_\delta  (\nabla u_{0, \lambda}) -S_\delta (\eta _\delta (\nabla u_{0, \lambda}))|^2\\
&\le C \int_{Q_1 \setminus  Q_{1-3\delta}} |\nabla u_{0, \lambda}|^2
+ C \delta^2  \int_{Q_{1-2\delta}}\big(  |\nabla^2 u_{0, \lambda} |^2 +|\partial_t u_{0, \lambda}|^2 \big).
\endaligned
$$
By the standard regularity estimates for parabolic systems with constant coefficients,
$$
\int_{Q_{1-2\delta}}
\big( |\nabla^2 u_{0, \lambda}|^2 + |\partial_t u_{0, \lambda}|^2 \big) 
\le C \left\{ \int_{Q_{1-\delta}}\frac{|\nabla u_{0, \lambda}( y, s) |^2\, dy ds  }{| \text{dist} _p ((y,s), \partial_p Q_1)|^2 }
+\int_{Q_1}  |F|^2  \right\},
$$
where $ \text{\rm dist}_p ((y, s), \partial_p Q_1)$ denotes the parabolic distance from $(y, s)$ to $\partial_p Q_1$.
It follows that 
\begin{equation}\label{3.44}
\aligned
I_1 & \le C \int_{Q_1 \setminus  Q_{1-3\delta}} |\nabla u_{0, \lambda}|^2
+C\delta^2  \left\{ \int_{Q_{1-\delta}}\frac{|\nabla u_{0, \lambda}( y, s) |^2\, dy ds  }{| \text{dist} _p ((y,s), \partial_p Q_1)|^2 }
+\int_{Q_1}  |F|^2  \right\}\\
& \le C \delta^{1-\frac{2}{q}}
\left(\fint_{Q_1} |\nabla u_{0, \lambda}|^q \right)^{2/q}
+ C \delta^2 \fint_{Q_1} |F|^2,
\endaligned
\end{equation}
where $q>2$ and we have used H\"older's inequality for the last step.

To bound $I_2$, $I_3$ and $I_5$, we use Lemma \ref{lemma-S} as well as  estimates (\ref{2.3}) and (\ref{3.12}),
Note that $(\chi^\lambda)^\e$, $(\phi_{kij}^\lambda)^\e$ and
$(\nabla \phi_{\ell (d+1) k}^\lambda)^\e$
are $\e$-periodic in $x$ and $\e^2 \lambda$-periodic in $t$.
Since $\delta\ge \e$ and $\delta^2 \ge \e^2 \lambda$, we obtain 
$$
\aligned
 & \fint_{Q_\delta (x, t)}
\Big(  | (\chi^\lambda)^\e|^2
+ |(\phi_{kij}^\lambda)^\e|^2
+ |( \nabla \phi^\lambda_{\ell (d+1) k})^\e  |^2 \Big)   \\
& \quad \le
C \fint_0^\lambda \!\!\!  \int_{\mathbb{T}^d} 
\Big(  | \chi^\lambda |^2
+ |\phi^\lambda _{kij} |^2
+ |\nabla \phi^\lambda_{\ell (d+1) k} |^2 \Big)  \\
&\quad \le C
\endaligned
$$
for any $(x, t)\in \mathbb{R}^{d+1}$.
It follows that
\begin{equation}\label{3.45}
\aligned
I_2+I_3 +I_5
& \le C\e^2  \int_{Q_1} |\nabla (\eta_\delta (\nabla u_{0, \lambda} )) |^2\\
& 
 \le C \delta^{1-\frac{2}{q}}
\left(\fint_{Q_1} |\nabla u_{0, \lambda}|^q \right)^{2/q}
+ C \delta^2 \fint_{Q_1} |F|^2.
\endaligned
\end{equation}
To bound $I_6$, we use the  inequality (\ref{S-2}) as well as the estimate (\ref{3.12-1}).
This leads to
\begin{equation}\label{3.46}
\aligned
I_6   & \le C \e^4 (1+\lambda)^2 \delta^{-2} \int_{Q_1} |\nabla (\eta_\delta \nabla u_{0, \lambda})|^2 \\ 
&
\le C \delta^{1-\frac{2}{q}}
\left(\fint_{Q_1} |\nabla u_{0, \lambda} |^q \right)^{2/q}
+ C \delta^2 \fint_{Q_1} |F|^2.
\endaligned
\end{equation}

Finally, to handle $I_4$, we use the observation
\begin{equation}\label{3.47}
\aligned
\partial_t K_\e (\nabla u_{0, \lambda})
&= \partial_t S_\delta (\eta_\delta \nabla u_{0, \lambda})\\
&= S_\delta ( (\partial_t \eta_\delta) \nabla u_{0, \lambda})
+ S_\delta (\nabla (\eta_\delta \partial_t u_{0, \lambda}))
+S_\delta ( (\nabla \eta_\delta) \partial_t u_{0, \lambda}).
\endaligned
\end{equation}
As in the case of $I_6$, we obtain
\begin{equation}\label{3.48}
\aligned
I_4
&\le C \e^4 (1+\lambda)^2
 \int_{Q_1} 
 \Big\{
 |(\partial_t \eta_\delta ) \nabla u_{0, \lambda}|^2
+ \delta^{-2} | \eta_\delta \partial_t u_{0, \lambda}|^2
+ |(\nabla \eta_\delta) \partial_t u_{0, \lambda}|^2  \Big\}  \\
&\le 
C \delta^{1-\frac{2}{q} }
\left(\fint_{Q_1} |\nabla u_{0, \lambda} |^q \right)^{2/q}
+ C \delta^2 \fint_{Q_1} |F|^2.
\endaligned
\end{equation}
Let  $\sigma =\frac12-\frac{1}{q}>0$.
In view of (\ref{3.44}), (\ref{3.45}), (\ref{3.46}) and (\ref{3.47}), we have proved that
\begin{equation}\label{3.49}
\aligned
\fint_{Q_1} |\nabla w_\e|^2
 & \le C \delta^{2\sigma }
\left(\fint_{Q_1} |\nabla u_{0, \lambda}|^q \right)^{2/q}
+ C \delta^2 \fint_{Q_1} |F|^2\\
&\le C\delta^{2\sigma}
\left\{ \fint_{Q_2} |\nabla u_{\e, \lambda} |^2 
+ \fint_{Q_2} |F|^2\ \right\},
\endaligned
\end{equation}
where.we have used (\ref{3.42}) for the last step.
To finish the proof, we let $H_\e$ be the last term in (\ref{w-3}).
It is easy to see that
$$
\int_{Q_1} |\nabla H_\e|^2
\le I_5 + I_6.
$$
This, together with (\ref{3.49}), gives the estimate (\ref{3.40}).
\end{proof}

We are now ready to give the proof of Theorem \ref{theorem-3.1}.

\begin{proof}[\bf Proof of Theorem \ref{theorem-3.1}]
By translation and dilation we may assume that $r=1$ and $Q_2=B(0, 2) \times (-4, 0)$.
We may also assume that $\delta=(1+\sqrt{\lambda}) \e \le 1/8$.
This reduces the problem to the case considered in Lemma \ref{lemma-3.4}.
Observe that  $K_\e (\nabla u_{0, \lambda}) =S_\delta  (\nabla u_{0, \lambda})$ on $Q_{1/2}$.
Thus, in view of Lemma \ref{lemma-3.4},
it suffices to show that
\begin{equation} \label{I-11}
\left(\fint_{Q_{1/2}} 
\big| \nabla\Big\{
\e (\chi^\lambda)^\e \ S_\delta  (\nabla u_{0, \lambda}) \Big\}  -(\nabla \chi^\lambda)^\e \nabla u_{0, \lambda}  \big|^2\right)^{1/2}
\end{equation}
is bounded by the right-hand side of (\ref{3.40}).
Furthermore, since $(\partial_t +\mathcal{L}_{0, \lambda} )u_{0, \lambda} =F $ in $Q_1$, we have
$$
\|\nabla^2 u_{0, \lambda}\|_{L^2(Q_{3/4})}
\le C \left\{ \left(\fint_{Q_1} |\nabla u_{0, \lambda}|^2 \right)^{1/2}
+\left(\fint_{Q_1} |F|^2 \right)^{1/2} \right\}.
$$
Also, recall  that
\begin{equation}\label{J-0}
\|(\chi^\lambda)^\e\|_{L^2(Q_1)} +\| (\nabla \chi^\lambda)^\e\|_{L^2(Q_1)}\le C.
\end{equation}
As a result, it is enough to show that 
\begin{equation}\label{J-1}
\left(\fint_{Q_{1/2}}
\big|  (\nabla \chi^\lambda)^\e \big( S_\delta (\nabla u_{0, \lambda}) - \nabla u_{0, \lambda}\big) |^2 \right)^{1/2}
\end{equation}
 is bounded by the right-hand side of (\ref{3.40}).
This, however, follows   from (\ref{J-0}) and  the estimate
\begin{equation}\label{J-2}
\|S_\delta (\nabla u_{0, \lambda}) -\nabla u_{0, \lambda}\|_{L^\infty(Q_{1/2})}
\le C \delta^\sigma
\left\{ 
\left(\fint_{Q_1} |\nabla u_{0, \lambda}|^2 \right)^{1/2}
+ \left(\fint_{Q_1} |F|^p \right)^{1/p} \right\},
\end{equation}
where $p>d+2$ and $\sigma =1-\frac{d+2}{p}$.

Finally, we point out that (\ref{J-2}) follows readily from  the $C^{1+ \sigma}$ estimates for $\partial_t +\mathcal{L}_{0, \lambda}$,
\begin{equation}
\aligned
&|\nabla u_{0, \lambda}(x,t)-\nabla u_{0, \lambda} (y, s)|\\
 & \le C \Big(
 |x-y|  + |t-s|^{1/2}\Big)^\sigma
 \left\{ \left( \fint_{Q_1} |\nabla u_{0, \lambda} |^2 \right)^{1/2}
+ \left(\fint_{Q_1} |F|^p\right)^{1/2} \right\}
\endaligned
\end{equation}
for any $(x, t), (y, s)\in Q_{1/2}$.
This completes the proof.
\end{proof}



\section{Large-scale  Lipschitz and $C^{1, \alpha}$  estimates}\label{section-4}

In this section we establish the large-scale Lipschitz and $C^{1, \alpha}$ estimates for 
$\partial_t +\mathcal{L}_{\e, \lambda}$.
As a consequence, we obtain the same estimates for the parabolic operator $\partial_t+ \mathcal{L}_\e$ in (\ref{operator-0}).
Let 
\begin{equation}\label{P-1}
\aligned
P^\lambda_{1, \e} =
\Big\{ P=P(x, t): \ & P(x, t)=\beta + e_j (x_j +\e \chi_j^\lambda (x/\e, t/\e^2) )\\
&  \text{ for some } \beta \in \mathbb{R} \text{ and } ( e_1,  e_2,\dots, e_d) \in \mathbb{R}^{d} \Big \}.
\endaligned
 \end{equation}
 Note  that $(\partial_t +\mathcal{L}_{\e, \lambda}) P=0$ in $\mathbb{R}^{d+1}$ for any
 $P\in P_{1, \e}^\lambda$.

\begin{thm}[$C^{1, \alpha}$ estimate]
\label{theorem-4.1}
Suppose $A$ satisfies conditions (\ref{ellipticity}) and (\ref{periodicity}).
Let $u_{\e, \lambda}$ be a weak solution of $(\partial_t +\mathcal{L}_{\e, \lambda}) u_{\e, \lambda} =F$
in $Q_R$, where $R> (1+\sqrt{\lambda}) \e$ and $F\in L^p(Q_R)$ for some $p>d+2$.
Then, for any $(1+\sqrt{\lambda})\e \le r<  R$ and $0<\alpha< 1-\frac{d+2}{p}$,
\begin{equation}\label{4.1-0}
\inf_{P\in P_{1, \e}^\lambda}
\left(\fint_{Q_r}
|\nabla (u_{\e, \lambda} -P )|^2 \right)^{1/2}
\le C \left(\frac{r}{R}\right)^\alpha
\left\{ \left(\fint_{Q_R} |\nabla u_{\e, \lambda}|^2 \right)^{1/2}
+ R \left(\fint_{Q_R} |F|^p \right)^{1/p} \right\},
\end{equation}
where $C>0$  depend only on $d$, $\mu$, $p$ and $\alpha$.
\end{thm}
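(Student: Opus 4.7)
The plan is to iterate a one-step excess decay inequality for the scale-invariant quantity
$$H(r):=\inf_{P\in P^\lambda_{1,\e}}\left(\fint_{Q_r}|\nabla(u_{\e,\lambda}-P)|^2\right)^{1/2}.$$
Since every $P\in P^\lambda_{1,\e}$ satisfies $(\partial_t+\mathcal{L}_{\e,\lambda})P=0$, the function $u_{\e,\lambda}-P$ solves the same inhomogeneous equation $(\partial_t+\mathcal{L}_{\e,\lambda})(u_{\e,\lambda}-P)=F$ as $u_{\e,\lambda}$, and choosing $P=0$ gives the trivial upper bound $H(R)\le\bigl(\fint_{Q_R}|\nabla u_{\e,\lambda}|^2\bigr)^{1/2}$. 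The target (\ref{4.1-0}) will follow once one establishes $H(r)\le C(r/R)^\alpha\{H(R)+R(\fint_{Q_R}|F|^p)^{1/p}\}$ for all $(1+\sqrt\lambda)\e\le r<R$.

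For the one-step step, I would fix a near-minimizer $P$ of $H(r)$ and apply Theorem \ref{theorem-3.1} to $u_{\e,\lambda}-P$ on $Q_{r}$. This produces a weak solution $v_{0,\lambda}$ of the constant-coefficient equation $(\partial_t+\mathcal{L}_{0,\lambda})v_{0,\lambda}=F$ on a sub-cylinder, with $\bigl(\fint|\nabla v_{0,\lambda}|^2\bigr)^{1/2}\le CH(r)$ and a small two-scale approximation remainder of order $((1+\sqrt\lambda)\e/r)^\sigma$. Because Lemma \ref{lemma-2.1} gives uniform ellipticity of $\widehat{A_\lambda}$ with constants depending only on $d,\mu$, the classical interior $C^{1,\alpha}$ regularity for $\partial_t-\mathrm{div}(\widehat{A_\lambda}\nabla\cdot)$ with $L^p$ source ($p>d+2$) supplies, for every $\theta\in(0,1/8)$ and every $\alpha<1-(d+2)/p$, a scalar $\beta$ and vector $e\in\mathbb{R}^d$ with
$$\|\nabla v_{0,\lambda}-e\|_{L^\infty(Q_{\theta r})}\le C\theta^\alpha\bigl(H(r)+r(\fint_{Q_r}|F|^p)^{1/p}\bigr).$$
Setting $\tilde P(x,t)=P(x,t)+\beta+e_j\bigl(x_j+\e\chi_j^\lambda(x/\e,t/\e^2)\bigr)\in P^\lambda_{1,\e}$ and decomposing
$$\nabla(u_{\e,\lambda}-\tilde P)=\bigl[\nabla(u_{\e,\lambda}-P)-\nabla v_{0,\lambda}-(\nabla\chi^\lambda)^\e\nabla v_{0,\lambda}\bigr]+(\nabla v_{0,\lambda}-e)+(\nabla\chi^\lambda)^\e(\nabla v_{0,\lambda}-e),$$
the three summands are controlled, respectively, by the approximation estimate from Theorem \ref{theorem-3.1}, by the pointwise $C^{1,\alpha}$ bound, and by H\"older combined with the uniform corrector bound $\bigl(\fint_{Q_{\theta r}}|(\nabla\chi^\lambda)^\e|^2\bigr)^{1/2}\le C$ from (\ref{Q-e-1}). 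This last use is valid precisely because $\theta r\ge(1+\sqrt\lambda)\e$, which is where the microscopic threshold in the theorem enters. The net output is the one-step inequality
$$H(\theta r)\le\bigl[C_1\theta^\alpha+C_2(\theta)\bigl(\tfrac{(1+\sqrt\lambda)\e}{r}\bigr)^\sigma\bigr]\bigl(H(r)+r(\fint_{Q_r}|F|^p)^{1/p}\bigr).$$

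To iterate, I would choose $\theta$ small enough that $C_1\theta^\alpha\le\tfrac14\theta^{\alpha'}$ for a prescribed $\alpha'<\alpha$, then impose $r\ge K(1+\sqrt\lambda)\e$ with $K=K(\theta)$ large enough to ensure $C_2(\theta)K^{-\sigma}\le\tfrac14\theta^{\alpha'}$; this yields $H(\theta r)\le\tfrac12\theta^{\alpha'}H(r)+Cr(\fint_{Q_r}|F|^p)^{1/p}$. A standard Campanato-type summation along the geometric sequence $r_k=\theta^kR$ then gives $H(r)\le C(r/R)^{\alpha'}\{H(R)+R(\fint_{Q_R}|F|^p)^{1/p}\}$ on the range $K(1+\sqrt\lambda)\e\le r\le R$, while the residual band $(1+\sqrt\lambda)\e\le r\le K(1+\sqrt\lambda)\e$ is absorbed via the trivial doubling $H(r)\le C\,H(Kr)$. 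Since $\alpha'$ can be taken arbitrarily close to $1-(d+2)/p$, (\ref{4.1-0}) follows. The main technical hurdle is arranging that \emph{every} constant in the one-step inequality is independent of $\lambda$; this hinges on three $\lambda$-free ingredients: the ellipticity bound of Lemma \ref{lemma-2.1} (which keeps the constant-coefficient $C^{1,\alpha}$ constants uniform in $\lambda$), the corrector estimate (\ref{Q-e-1}) at scales $\ge(1+\sqrt\lambda)\e$, and, crucially, the $\lambda$-independence of the bounds in the approximation Theorem \ref{theorem-3.1}.
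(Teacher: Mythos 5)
Your proposal is correct and follows essentially the same route as the paper: approximate by a solution of the constant-coefficient equation via Theorem \ref{theorem-3.1}, use the classical $C^{1,\alpha}$ estimate for $\partial_t+\mathcal{L}_{0,\lambda}$ together with the corrector bound (\ref{Q-e-1}) at scales $\ge(1+\sqrt{\lambda})\e$ to get the one-step excess decay with error $C_1\theta^{\alpha_p}+C_2(\theta)((1+\sqrt{\lambda})\e/r)^{\sigma}$, then fix $\theta$, impose $r\ge C_\theta(1+\sqrt{\lambda})\e$, and iterate. The only (harmless) cosmetic difference is that you apply the approximation theorem to $u_{\e,\lambda}-P$ for a near-minimizer $P$ up front, whereas the paper applies it to $u_{\e,\lambda}$ and restores the infimum at the end using $(\partial_t+\mathcal{L}_{\e,\lambda})P=0$.
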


\begin{proof}
The proof relies on the approximation results in Theorem \ref{theorem-3.1} and uses 
classical regularity estimates for parabolic systems with constant coefficients.
By translation and dilation we may assume that $R=2$ and $Q_2 =B(0, 2)\times (-4, 0)$.
Let 
$$
(1+\sqrt{\lambda} )\e < \theta r< r< 1,
$$
where $\theta\in (0, 1/4)$ is to be chosen later.
Let $u_{0, \lambda}$ be the weak solution of $(\partial_t +\mathcal{L}_{0, \lambda}) u_{0, \lambda} =F$ in $Q_r$, given by Theorem \ref{theorem-3.1}.
By the classical $C^{1+\alpha}$ estimates for parabolic systems with constant coefficients,
$$
|\nabla u_{0, \lambda} (x, t)-\nabla u_{0, \lambda} (0, 0)|
\le C\left(\frac{ |x|+|t|^{1/2}}{r}\right)^{\alpha_p}
\left\{ 
\left(\fint_{Q_r} |\nabla u_{0, \lambda}|^2\right)^{1/2}
+  r\left(\fint_{Q_r} |F|^p \right)^{1/p} \right\}
$$
for any $(x, t)\in Q_{ r/2}$, where $\alpha_p =1-\frac{d+2}{p}$.
Let 
$P(x, t) =e_j (x_j +\e\chi_j^\lambda (x/\e, t/\e^2))$ with $e_j =\frac{\partial u_{0, \lambda}}{\partial x_j} (0, 0)$.
Then
$$
\aligned
& \left(\fint_{Q_{\theta r}}
|\nabla u_{0, \lambda} (x, t) -\nabla \chi^\lambda (x/\e, t/\e^2) \nabla u_{0, \lambda}  (x, t) -\nabla P  (x, t)|^2 \, dx dt
\right)^{1/2} \\
& \quad
\le C \theta^{\alpha_p}
\left\{ 
\left(\fint_{Q_r} |\nabla u_{0, \lambda}|^2\right)^{1/2}
+  r\left(\fint_{Q_r} |F|^p \right)^{1/p} \right\}
\endaligned
$$
for any $(x, t)\in Q_{\theta r}$.
It follows that
$$
\aligned
 & \left(\fint_{Q_{\theta r}} |\nabla (u_{\e, \lambda} -P) |^2 \right)^{1/2} + \theta r \left (\fint_{Q_{\theta r}} |F|^p \right)^{1/p}
 \\
&\le C \left(\fint_{Q_{\theta r}}
|\nabla u_{\e, \lambda} -\nabla u_{0, \lambda} - (\nabla \chi^\lambda)^\e \nabla u_{0, \lambda}|^2 \right)^{1/2}\\
 & \qquad +  C \theta^{\alpha_p}
\left\{ 
\left(\fint_{Q_r} |\nabla u_{0, \lambda}|^2\right)^{1/2}
+  r\left(\fint_{Q_r} |F|^p \right)^{1/p} \right\}
+\theta r \left (\fint_{Q_{\theta r}} |F|^p \right)^{1/p}
\\
&\le C_0 \left\{ \theta^{-\frac{d+2}{2}}
\left(\frac{ (1+\sqrt{\lambda}) \e}{r}\right)^\sigma 
 +\theta^{\alpha_p} \right\}
\left\{ 
\left(\fint_{Q_{2r}} |\nabla u_{\e, \lambda}|^2\right)^{1/2}
+  2r\left(\fint_{Q_{2r}} |F|^p \right)^{1/p} \right\},
\endaligned
$$
where $C_0$ depends only $d$, $\mu$ and $p$.
Fix $0<\alpha< \alpha_p$.
We choose $\theta\in (0, 1/4)$ so small that
$ C_0 \theta^{\alpha_p} \le (1/2)\theta^{\alpha} $.
With $\theta$ chosen, we assume that $r\ge C_\theta (1+\sqrt{\lambda}) \e$, where $C_\theta>1$ is so large that
$$
C_0 \theta^{-\frac{d+2}{2}} C_\theta^{-\sigma}  <(1/2) \theta^{\alpha}.
$$
This leads to
$$
\aligned
&  \left(\fint_{Q_{\theta r}} |\nabla ( u_{\e, \lambda} -P) |^2 \right)^{1/2}
  + \theta r \left(\fint_{Q_{\theta r}} |F|^p\right)^{1/p}\\
& \qquad
 \le \theta^{\alpha}
 \left\{
   \left(\fint_{Q_{2 r}} |\nabla u_{\e, \lambda} |^2 \right)^{1/2}
  + 2 r \left(\fint_{Q_{ 2r}} |F|^p\right)^{1/p}\right\}.
\endaligned
$$
Since $(\partial_t +\mathcal{L}_{\e, \lambda}) P=0$ in $\mathbb{R}^{d+1}$
 for any $P\in P_{1, \e}^ \lambda$,
we obtain 
\begin{equation}\label{4.10}
\aligned
&  \inf_{P\in P_{1, \e}^\lambda}
\left(\fint_{Q_{\theta r}} |\nabla (u_{\e, \lambda} -P) |^2 \right)^{1/2}
  + \theta r \left(\fint_{Q_{\theta r}} |F|^p\right)^{1/p}\\
& \qquad
 \le \theta^{\alpha}
 \left\{
  \inf_{P\in P_{1, \e}^\lambda }
   \left(\fint_{Q_{2 r}} |\nabla ( u_{\e, \lambda}-P) |^2 \right)^{1/2}
  + 2 r \left(\fint_{Q_{ 2r}} |F|^p\right)^{1/p}\right\}.
\endaligned
\end{equation}
for any $C_\theta (1+\sqrt{\lambda})\e  \le  r<1$.
By an iteration argument it follows that 
\begin{equation}\label{4.11}
\aligned
&  \inf_{P\in P_{1, \e}^\lambda}
\left(\fint_{Q_{ r}} |\nabla (u_{\e, \lambda} -P) |^2 \right)^{1/2}
  +  r \left(\fint_{Q_{r}} |F|^p\right)^{1/p}\\
& \qquad
 \le  C r^\alpha 
 \left\{
  \inf_{P\in P_{1, \e}^\lambda }
   \left(\fint_{Q_{2 }} |\nabla ( u_{\e, \lambda}-P) |^2 \right)^{1/2}
  +  \left(\fint_{Q_{ 2}} |F|^p\right)^{1/p}\right\}.
\endaligned
\end{equation}
for any $(1+\sqrt{\lambda}) \e \le  r< 1$.
This gives the large-scale $C^{1, \alpha}$ estimate (\ref{4.1-0}).
\end{proof}

\begin{thm}[Lipschitz estimate]\label{L-e-10}
Suppose $A$ satisfies conditions (\ref{ellipticity}) and (\ref{periodicity}).
Let $u_{\e, \lambda} $ be a weak solution of $(\partial_t +\mathcal{L}_{\e, \lambda}) u_{\e, \lambda}=F$
in $Q_R$, where $R> (1+\sqrt{\lambda}) \e$ and $F\in L^p(Q_R)$ for some $p>d+2$.
Then, for any $(1+\sqrt{\lambda})\e \le r<  R$,
\begin{equation}\label{4.1-1}
\left(\fint_{Q_r}
|\nabla u_{\e, \lambda} |^2 \right)^{1/2}
\le C 
\left\{ \left(\fint_{Q_R} |\nabla u_{\e, \lambda}|^2 \right)^{1/2}
+ R \left(\fint_{Q_R} |F|^p \right)^{1/p} \right\},
\end{equation}
where $C>0$  depend only on $d$, $\mu$  and $p$.
\end{thm}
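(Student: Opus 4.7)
The plan is to deduce the Lipschitz estimate from the $C^{1,\alpha}$ excess-decay in Theorem~\ref{theorem-4.1} via the standard ``affine corrector tracking'' argument across dyadic scales. The content of Theorem~\ref{L-e-10} beyond Theorem~\ref{theorem-4.1} is essentially the control of the coefficient vector of the minimizing $P\in P_{1,\e}^\lambda$ at each scale.

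Parametrize each $P\in P_{1,\e}^\lambda$ uniquely as $P(x,t)=\beta+e\cdot\big(x+\e\chi^\lambda(x/\e,t/\e^2)\big)$ with $e\in\mathbb{R}^d$, $\beta\in\mathbb{R}$. The preliminary observation is the two-sided comparability
\begin{equation*}
c|e|\ \le\ \left(\fint_{Q_r}|\nabla P|^2\right)^{1/2}\ \le\ C|e|\qquad\text{for every } r\ge (1+\sqrt{\lambda})\e,
\end{equation*}
with $c,C$ depending only on $d$ and $\mu$. The upper bound is immediate from (\ref{Q-e-1}). The lower bound comes from the ellipticity computation in the proof of Lemma~\ref{lemma-2.1}, which gives $\mu|e|^2\le \fint_0^\lambda\!\int_{\mathbb{T}^d}|e+e\nabla\chi^\lambda|^2$, together with the $(\e,\e^2\lambda)$-periodicity of $|e+e\nabla\chi^\lambda(x/\e,t/\e^2)|^2$, so that its average over $Q_r$ is (up to a universal multiplicative factor) equal to the cell average whenever $Q_r$ contains a full periodic cell --- precisely the content of $r\ge (1+\sqrt{\lambda})\e$.

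With this in hand, set $M:=\big(\fint_{Q_R}|\nabla u_{\e,\lambda}|^2\big)^{1/2}+R\big(\fint_{Q_R}|F|^p\big)^{1/p}$, and for every admissible scale $\rho\in[(1+\sqrt{\lambda})\e,R]$ let $P_\rho\in P_{1,\e}^\lambda$ be a minimizer of the infimum in (\ref{4.1-0}) with coefficient vector $e_\rho$ (existence is trivial since the infimum is over a finite-dimensional space). For dyadic scales $r_k=\theta^k R$ with $\theta\in(0,1/4)$ as in Theorem~\ref{theorem-4.1}, and as long as $r_k\ge (1+\sqrt{\lambda})\e$, Theorem~\ref{theorem-4.1} yields $\big(\fint_{Q_{r_k}}|\nabla(u_{\e,\lambda}-P_{r_k})|^2\big)^{1/2}\le C\theta^{k\alpha}M$. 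The triangle inequality on the smaller cylinder $Q_{r_{k+1}}\subset Q_{r_k}$, together with the volume-ratio factor $(|Q_{r_k}|/|Q_{r_{k+1}}|)^{1/2}=\theta^{-(d+2)/2}$, then gives
\begin{equation*}
\left(\fint_{Q_{r_{k+1}}}|\nabla(P_{r_k}-P_{r_{k+1}})|^2\right)^{1/2}\le C_\theta\,\theta^{k\alpha}M.
\end{equation*}
Applying the lower bound in the comparability to $P_{r_k}-P_{r_{k+1}}\in P_{1,\e}^\lambda$ at scale $r_{k+1}$ produces $|e_{r_k}-e_{r_{k+1}}|\le C\theta^{k\alpha}M$. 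Summing the geometric series $\sum_k\theta^{k\alpha}$ yields $|e_{r_k}|\le |e_R|+CM$ for every admissible $k$; and $|e_R|\le CM$ follows from the upper comparability and the triangle-inequality bound $\big(\fint_{Q_R}|\nabla P_R|^2\big)^{1/2}\le \big(\fint_{Q_R}|\nabla(u_{\e,\lambda}-P_R)|^2\big)^{1/2}+\big(\fint_{Q_R}|\nabla u_{\e,\lambda}|^2\big)^{1/2}\le 2M$, where we used $P\equiv 0$ to bound the first summand.

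To finish, given any $(1+\sqrt{\lambda})\e\le r<R$, pick $k$ with $r_{k+1}\le r\le r_k$ and split
\begin{equation*}
\nabla u_{\e,\lambda}=\nabla(u_{\e,\lambda}-P_{r_k})+\nabla P_{r_k}.
\end{equation*}
The first term is controlled by $C\theta^{k\alpha}M\le CM$ via Theorem~\ref{theorem-4.1} and the containment $Q_r\subset Q_{r_k}$; the second is controlled by $C|e_{r_k}|\le CM$ via the upper comparability. This gives (\ref{4.1-1}). The main obstacle (and essentially the only new ingredient beyond Theorem~\ref{theorem-4.1}) is the uniform-in-$\lambda$ lower bound $c|e|\le \big(\fint_{Q_r}|\nabla P|^2\big)^{1/2}$, which is exactly what makes the threshold $r\ge (1+\sqrt{\lambda})\e$ both necessary and sufficient for the conclusion.
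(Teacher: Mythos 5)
Your overall architecture coincides with the paper's: iterate the $C^{1,\alpha}$ excess decay of Theorem \ref{theorem-4.1} over dyadic scales, bound the increments $|e_{r_k}-e_{r_{k+1}}|$ of the minimizing coefficients by the excess at consecutive scales, and sum a geometric series. The one place where you deviate --- and which you correctly flag as the only new ingredient --- is the uniform lower comparability $c|e|\le\big(\fint_{Q_r}|\nabla P|^2\big)^{1/2}$ for all $r\ge(1+\sqrt{\lambda})\e$, and the justification you give for it does not hold up. First, $r\ge(1+\sqrt{\lambda})\e$ does not imply that $Q_r$ contains a full periodic cell $(0,\e)^d\times(0,\lambda\e^2)$: as the paper itself points out in the introduction, containment of the spatial cell in a ball $B(x_0,r)$ forces $2r\ge\sqrt{d}\,\e$, so for $d\ge 5$ and $\lambda\lesssim 1$ a cylinder with $r$ at the threshold need not contain any full cell. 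Second, even when $Q_r$ does contain a full cell, the average over $Q_r$ of a nonnegative periodic function is bounded below by the cell average only up to the factor $M\,|\mathrm{cell}|/|Q_r|$, where $M$ is the number of full cells contained; this is a universal constant only when $Q_r$ is mostly tiled by full cells, and a single contained cell gives nothing uniform near the threshold. For a general nonnegative periodic function the two-sided comparability genuinely fails in this regime (take a function concentrated near the lattice points, which $B(x_0,r)$ with $r\approx\e$ may miss entirely in high dimensions), so the claim would need an argument specific to $|e+e\nabla\chi^\lambda|^2$, which you do not supply. Note also that your bound $|e_R|\le CM$ and the final step both lean on this same comparability.

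The gap is localized and reparable: the lower comparability does hold with constants depending only on $d$ and $\mu$ once $r\ge C_d(1+\sqrt{\lambda})\e$, since then $Q_r$ contains at least $c_d\,|Q_r|/|\mathrm{cell}|$ disjoint full cells, and the leftover range $(1+\sqrt{\lambda})\e\le r< C_d(1+\sqrt{\lambda})\e$ is handled at the end by enlarging the cylinder and adjusting constants. The paper sidesteps the issue with a more robust extraction of $|E_{2r}-E_r|$: it first isolates the purely affine part via the finite-dimensional inequality $|E|\le \frac{C}{r}\inf_{\beta}\big(\fint_{Q_{r/2}}|E\cdot x-\beta|^2\big)^{1/2}$, reinstates the corrector at the cost of a term $C|E|\,\e\, r^{-1}$ that is absorbed into the left-hand side for $r\ge C_1\e$, and then applies Caccioppoli's inequality to the $\mathcal{L}_{\e,\lambda}$-caloric function $H_{2r}-H_r-\beta$ to return from the $L^2$ norm to the gradient. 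You should either adopt that device or insert the restriction $r\ge C_d(1+\sqrt{\lambda})\e$ into your preliminary observation and prove it by the cell-counting argument sketched above.
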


\begin{proof}
By translation and dilation we may assume that $R=2$ and
$Q_2=B(0, 2)\times (-4, 0)$.
Define
$$
h(r)=\left(\fint_{Q_r} |\nabla H_r|^2\right)^{1/2},
$$
where $H_r=E_r \cdot (x +\e \chi^\lambda (x/\e, t/\e^2) )$, with $E_r \in \mathbb{R}^d$,
 is a function in $P_{1, \e}^\lambda$ such that
$$
\left(\fint_{Q_r} |\nabla  ( u_{\e, \lambda} - H_r) |^2\right)^{1/2}
=\inf_{P\in P_{1, \e}^\lambda}
\left(\fint_{Q_r} |\nabla (u_{\e, \lambda} -P)|^2 \right)^{1/2}.
$$
Let $C (1+\sqrt{\lambda})\e<  r<1/2$.
Note that
$$
\aligned
|E_{2 r} -E_r| 
&\le \frac{C}{ r}
\inf_{\beta \in \mathbb{R}}
\left(\fint_{Q_{r/2} }
| (E_{2r} -E_r)\cdot x -\beta|^2 \right)^{1/2}\\
&\le 
 \frac{C}{ r}
\inf_{\beta\in \mathbb{R}}
\left(\fint_{Q_{ r/2} }
| H_{2 r} -H_r  -\beta|^2 \right)^{1/2}
+ C |E_{2r} -E_r|  r^{-1} \e,
\endaligned
$$
where $C$ depends only on $d$ and $\mu$.
It follows that if $r\ge C_1 \e$ and $C_1>1$ is sufficiently large, then
\begin{equation}
\aligned
|E_{2r} -E_r| 
&\le 
\frac{C}{ r}
\inf_{\beta\in \mathbb{R}}
\left(\fint_{Q_{ r/2} }
| H_{2 r} -H_r  -\beta|^2 \right)^{1/2}\\
& \le
C \left(\fint_{Q_{ r}}
|\nabla ( H_{2 r} -H_r)|^2 \right)^{1/2},
\endaligned
\end{equation}
where we have used the fact that
 $(\partial_t + \mathcal{L}_{\e, \lambda} )(H_{2 r} -H_r -\beta)=0$ in $\mathbb{R}^{d+1}$
for the last inequality.
Hence,
$$
\aligned
|E_{2r} -E_r|
&\le 
C 
\left(\fint_{Q_{2r}}
|\nabla (u_{\e, \lambda} -H_{2r})|^2 \right)^{1/2}
+
C
\left(\fint_{Q_{r}}
|\nabla (u_{\e, \lambda} -H_r)|^2 \right)^{1/2}\\
&
 \le  C r^\alpha 
 \left\{
  \inf_{P\in P_{1, \e}^\lambda }
   \left(\fint_{Q_{2 }} |\nabla ( u_{\e, \lambda} -P) |^2 \right)^{1/2}
  +  \left(\fint_{Q_{ 2}} |F|^p\right)^{1/p}\right\},
  \endaligned
  $$
  where we have used (\ref{4.11}) for the last step.
  By a simple summation this yields
  $$
h(r) \le C   |E_r| \le  C 
 \left\{
   \left(\fint_{Q_{2 }} |\nabla  u_{\e, \lambda}  |^2 \right)^{1/2}
  +  \left(\fint_{Q_{ 2}} |F|^p\right)^{1/p}\right\},
  $$
which, together with (\ref{4.1-0}),
gives   the large-scale Lipschitz estimate (\ref{4.1-1}).
\end{proof}

\begin{proof}[\bf Proof of Theorem \ref{m-theorem-1}]
Recall that if $\lambda=\e^{k-2}$, then $\mathcal{L}_{\e, \lambda}=\mathcal{L}_\e$.
Also note that in this case, $(1+\sqrt{\lambda}) \e =\e +\e^{k/2}$.
As a result, Theorem \ref{m-theorem-1} follows directly from Theorem \ref{L-e-10}.
\end{proof}

\begin{remark}[$C^{1, \alpha}$  estimate] \label{re-4}
Let $u_\e$ be a weak solution of $(\partial_t +\mathcal{L}_\e) u_\e=F$ in $Q_R$,
where  $R> \e +\e^{k/2}$ and  $F\in L^p(Q_R)$ for some $p>d+2$.
It  follows from Theorem \ref{theorem-4.1} that for $ \e +\e^{k/2} \le  r< R$ and $0< \alpha< 1-\frac{d+2}{p}$,
\begin{equation}\label{C-a}
\aligned
 &\inf_{E\in \mathbb{R}^d}
\left(\fint_{Q_r}
|\nabla u_\e - E - E\nabla \chi^\lambda (x/\e, t/\e^2)|^2 \right)^{1/2}\\
&
 \le C \left(\frac{r}{R}\right)^\alpha
\left\{ \inf_{E\in \mathbb{R}^d}
\left(\fint_{Q_R} |\nabla u_\e -E -E \nabla \chi^\lambda (x/\e, t/\e^2)|^2  \right)^{1/2}
+ R \left(\fint_{Q_R} |F|^p \right)^{1/p} \right\},
\endaligned
\end{equation}
where $\lambda=\e^{k-2}$ and $C$ depends only on $d$, $\mu$, $p$ and $\alpha$.
Note that $\nabla \chi^\lambda (x/\e, t/\e^2)$ is
$\e$-periodic in $x$ and $\e^k$-periodic in $t$.
One may regard (\ref{C-a})  as a $C^{1, \alpha}$ excess-decay estimate for the operator 
$\partial_t +\mathcal{L}_\e$ in (\ref{operator-0}).

Let $E_r\in \mathbb{R}^d$ be the constant for which the left-hand side of (\ref{C-a})
obtains its minimum.
It follows from the proof of Theorem \ref{L-e-10} that
\begin{equation}\label{re-41}
|E_r|
\le C
\left\{ \left(\fint_{Q_R} |\nabla u_\e|^2 \right)^{1/2}
+ R \left(\fint_{Q_R} |F|^p \right)^{1/p} \right\}.
\end{equation}
Let $\chi^\infty$ be defined by (\ref{2.8}).
In view of (\ref{Q-e-5}) we  have
\begin{equation}\label{re-42}
\left(\fint_{Q_r}
|\nabla \chi^\lambda (x/\e, t/\e^2) -\nabla \chi^\infty (x/\e, t/\e^k)|^2 dx dt \right)^{1/2}
\le C \e^{2-k} \|\partial_s A\|_\infty.
\end{equation}
This,  together with (\ref{C-a}) and (\ref{re-41}), yields
\begin{equation}\label{re-43}
\aligned
 &\inf_{E\in \mathbb{R}^d}
\left(\fint_{Q_r}
|\nabla u_\e - E - E\nabla \chi^\infty (x/\e, t/\e^k)|^2\, dx dt \right)^{1/2}\\
&
 \le C \left\{
 \left(\frac{r}{R}\right)^\alpha
 +\e^{2-k} \| \partial_s A\|_\infty  \right\} 
\left\{ \left(\fint_{Q_R} |\nabla u_\e|^2 \right)^{1/2}
+ R \left(\fint_{Q_R} |F|^p \right)^{1/p} \right\},
\endaligned
\end{equation}
for $0<k<2$.
Similarly, for $2<k<\infty$, we obtain
$$
\aligned
 &\inf_{E\in \mathbb{R}^d}
\left(\fint_{Q_r}
|\nabla u_\e - E - E\nabla \chi^0 (x/\e, t/\e^k)|^2\, dx dt \right)^{1/2}\\
&
 \le C \left\{
 \left(\frac{r}{R}\right)^\alpha
 +\e^{k-2} \| \nabla^2 A\|_\infty
 +\e^{k-2} \|\nabla A\|^2_\infty   \right\} 
\left\{ \left(\fint_{Q_R} |\nabla u_\e|^2 \right)^{1/2}
+ R \left(\fint_{Q_R} |F|^p \right)^{1/p} \right\}.
\endaligned
$$
\end{remark}



\section{Higher-order correctors and $C^{2, \alpha}$  estimates} \label{section-5}

In this section we introduce the second-order correctors and 
establish the large-scale $C^{2, \alpha}$ estimates for 
$\mathcal{L}_{\e, \lambda}$.

Let $A_\lambda =\big( a_{ij}^\lambda\big)$ and $B_\lambda =\big(b_{k\ell}^\lambda\big) $ be  the $(1, \lambda)$-periodic  matrices  given by 
(\ref{A-Lambda}) and (\ref{B-3}), respectively.
For $1\le k, \ell \le d$,
the second-order corrector $\chi_{k\ell}^\lambda=\chi_{k\ell}^\lambda (y, s)$ 
is defined to be the weak solution of the cell problem:
\begin{equation}\label{chi-2}
\left\{
\aligned
& \partial_s \chi_{k\ell}^\lambda
-\text{\rm div} \big(A_\lambda \nabla \chi_{k\ell}^\lambda\big)
=b_{k\ell}^\lambda + b_{\ell k}^\lambda
+\frac{\partial}{\partial y_i} \big( a_{i \ell}^\lambda \chi_k^\lambda \big)
+\frac{\partial}{\partial y_i} \big( a^\lambda_{ik} \chi^\lambda_\ell \big) \quad \text{ in } \mathbb{R}^{d+1},\\
 &  \chi_{k\ell}^\lambda \text{ is } (1, \lambda)\text{-periodic in } (y, s),\\
& \int_0^\lambda\!\!\!  \int_{\mathbb{T}^d} \chi_{k\ell}^\lambda\, dy ds =0, 
\endaligned
\right.
\end{equation}
where $(\chi_j^\lambda) $ are the first-order correctors defined by (\ref{cell-lambda}).
Since
$$
\int_0^\lambda \!\!\! \int_{\mathbb{T}^d} b^\lambda_{k\ell}\, dy ds=0,
$$
the solution to (\ref{chi-2}) exists and is unique.
Also, observe that $\chi^\lambda_{k\ell}=\chi_{\ell k}^\lambda$.
Moreover, by the energy estimates,
\begin{equation}\label{5.1}
\fint_0^\lambda \!\!\!\int_{\mathbb{T}^d}
|\nabla \chi^\lambda_{k\ell}|^2
\le C,
\end{equation}
where $C$ depends only on $d$ and $\mu$.

\begin{lemma}\label{lemma-5.1}
Let
$$
u(y, s)=y_k y_\ell + y_k \chi^\lambda_\ell  (y, s) 
+ y_\ell  \chi_k^\lambda  (y, s)  +\chi_{k\ell}^\lambda (y, s).
$$
Then 
$$
\big( \partial_s -\text{\rm div} (A_\lambda\nabla ) \big) u
= \big( \partial_s -\text{\rm div} (\widehat{A_\lambda} \nabla ) \big) (y_k y_\ell)
=- \widehat{a_{\ell k}^\lambda}
-\widehat{a_{k\ell}^\lambda}
$$
in $\mathbb{R}^{d+1}$, where $\widehat{A_\lambda} = \big( \widehat{a^\lambda_{k\ell}} \big)$.
\end{lemma}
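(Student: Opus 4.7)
The plan is to verify both equalities by direct computation, relying on the first-order cell problem \eqref{cell-lambda}, the second-order cell problem \eqref{chi-2}, and the definition $b^\lambda_{ij} = a^\lambda_{ij} + a^\lambda_{ip}\,\partial_p \chi^\lambda_j - \widehat{a^\lambda_{ij}}$ of the matrix $B_\lambda$. The right-hand identity is immediate: since $\widehat{A_\lambda}$ has constant entries and $y_k y_\ell$ is $s$-independent,
\[
\big(\partial_s - \text{\rm div}(\widehat{A_\lambda}\nabla)\big)(y_k y_\ell)
= -\partial_j\big(\widehat{a^\lambda_{ji}}\,\partial_i (y_k y_\ell)\big)
= -\widehat{a^\lambda_{\ell k}} - \widehat{a^\lambda_{k\ell}}.
\]

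For the left-hand identity, I would expand $\nabla u$ and $\partial_s u$ term by term from the definition $u = y_k y_\ell + y_k \chi_\ell^\lambda + y_\ell \chi_k^\lambda + \chi_{k\ell}^\lambda$. Applying $\partial_s - \text{\rm div}(A_\lambda\nabla)$ and using the product rule produces three clearly identifiable groups of terms: (i) a factor-$y_k$ group collecting $y_k \partial_s \chi^\lambda_\ell - y_k\,\partial_i(a^\lambda_{ij}\partial_j\chi^\lambda_\ell) - y_k\,\partial_i a^\lambda_{i\ell}$; (ii) a symmetric factor-$y_\ell$ group; and (iii) a bounded remainder containing $\partial_s \chi^\lambda_{k\ell} - \text{\rm div}(A_\lambda \nabla \chi^\lambda_{k\ell})$ together with all cross terms of the form $a^\lambda_{\cdot\cdot}\partial_\cdot \chi^\lambda_\cdot$ and $\chi^\lambda_\cdot \partial_i a^\lambda_{i\cdot}$, plus the two constants $-a^\lambda_{k\ell}$ and $-a^\lambda_{\ell k}$ arising from differentiating the $y_k y_\ell$, $y_k\chi^\lambda_\ell$ and $y_\ell\chi^\lambda_k$ factors twice. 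Groups (i) and (ii) vanish identically by \eqref{cell-lambda}.

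Group (iii) is where the second-order corrector enters. Substituting \eqref{chi-2} for $\partial_s \chi^\lambda_{k\ell} - \text{\rm div}(A_\lambda\nabla\chi^\lambda_{k\ell})$ and expanding the product rule in $\partial_i(a^\lambda_{i\ell}\chi^\lambda_k) + \partial_i(a^\lambda_{ik}\chi^\lambda_\ell)$ together with the explicit form of $b^\lambda_{k\ell} + b^\lambda_{\ell k}$, the constants $a^\lambda_{k\ell}+a^\lambda_{\ell k}$ cancel the corresponding contributions from group (iii), the cross terms $a^\lambda_{ip}\partial_p\chi^\lambda_\cdot$ cancel against the terms of the form $a^\lambda_{kj}\partial_j\chi^\lambda_\ell$ and $a^\lambda_{\ell j}\partial_j\chi^\lambda_k$, and the terms $\chi^\lambda_\cdot \partial_i a^\lambda_{i\cdot}$ likewise cancel. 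What survives is precisely the constant $-\widehat{a^\lambda_{k\ell}} - \widehat{a^\lambda_{\ell k}}$.

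The argument is entirely algebraic, so the main obstacle is bookkeeping: keeping track of every cross term generated by the three product factors in $u$ and verifying that the inhomogeneities of \eqref{chi-2} are exactly what is needed to absorb them. In fact, this computation explains the choice of right-hand side for the second-order corrector equation --- the terms $\partial_i(a^\lambda_{i\ell}\chi^\lambda_k) + \partial_i(a^\lambda_{ik}\chi^\lambda_\ell)$ are designed to absorb the cross terms produced by $y_k\chi^\lambda_\ell + y_\ell\chi^\lambda_k$, while $b^\lambda_{k\ell}+b^\lambda_{\ell k}$ absorbs the quadratic contribution and produces the constant $-\widehat{a^\lambda_{k\ell}}-\widehat{a^\lambda_{\ell k}}$. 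Before writing, I would also note that the solvability of \eqref{chi-2} (used implicitly in defining $\chi^\lambda_{k\ell}$) requires $\fint b^\lambda_{k\ell}\,dyds = 0$, which holds by the definition of $\widehat{A_\lambda}$.
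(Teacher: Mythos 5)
Your proposal is correct and is exactly the direct computation the paper invokes (the paper's proof of Lemma \ref{lemma-5.1} is simply ``this follows from a direct computation''): the factor-$y_k$ and factor-$y_\ell$ groups vanish by \eqref{cell-lambda}, and the remainder reduces via \eqref{chi-2} and $b^\lambda_{ij}=a^\lambda_{ij}+a^\lambda_{ip}\partial_p\chi^\lambda_j-\widehat{a^\lambda_{ij}}$ to $-\widehat{a^\lambda_{k\ell}}-\widehat{a^\lambda_{\ell k}}$. No gaps.
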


\begin{proof}
This follows from a direct computation, using the definitions of $\chi_j^\lambda$ and $\chi_{k\ell}^\lambda$.
\end{proof}

Let $P_0 (x, t)=\beta + e_0 t + e_k x_k +e_{k\ell } x_k x_\ell$ and
\begin{equation}\label{P-5}
\aligned
P_\e (x, t) & =\beta+ e_0 t  + e_k \big\{ x_k  + \e \chi_k^\lambda (x/\e, t/\e^2) \big\} \\
&\quad  +e_{k \ell} \Big\{  x_k x_\ell
+\e x_k \chi_\ell^\lambda (x/\e, t/\e^2) 
+\e x_\ell \chi_k^\lambda (x/\e, t/\e^2)
+\e^2 \chi_{k \ell} (x/\e, t/\e^2)  \Big\},
\endaligned
\end{equation}
where $ \beta, e_0, e_k, e_{k\ell} =e_{\ell k} \in \mathbb{R}$.
It follows from Lemma \ref{lemma-5.1} by rescaling that
$$
(\partial_t +\mathcal{L}_{\e, \lambda} ) P_\e 
=(\partial_t +\mathcal{L}_{0, \lambda} ) P_0
=e_0-2 e_{k\ell} \widehat{a^\lambda_{k\ell}} \quad \text{ in } \mathbb{R}^{d+1}.
$$
We shall use $P^\lambda_{2, \e}$ to denote the 
set of all functions $P_\e(x, t)$ in the form of (\ref{P-5}) such that
$(\partial_t +\mathcal{L}_{\e, \lambda}) P_\e =0$.
Let $C^\sigma_p (Q_R)$ denote the space of  H\"older continuous functions $u=u(x, t)$ such that
$$
\| u\|_{C^\sigma (Q_R)}
:=R^\sigma \sup
\left\{ \frac{| u(x, t)-u(y, s)|}{ ( |x-y| +|t-s|^{1/2} )^\sigma}:
 (x, t), (y, s) \in Q_R \text{ and } (x, t)\neq (y, s) \right\}<\infty,
 $$
where $\sigma \in (0, 1)$.

\begin{thm}[$C^{2, \alpha}$ estimate]\label{C-5}
Suppose  $A$ satisfies conditions (\ref{ellipticity}) and (\ref{periodicity}).
Let $u_{\e, \lambda}$ be a weak solution of
$(\partial_t +\mathcal{L}_{\e, \lambda}) u_{\e, \lambda} =F$ in $Q_R$,
where $R> (1+\sqrt{\lambda})\e $ and $F\in C^{\sigma}(Q_R)$ for some $\sigma\in (0, 1)$.
Then, for any $(1+\sqrt{\lambda}) \e \le r <R$ and $0<\alpha<\sigma$,
\begin{equation}\label{5.3-0}
\aligned
 &\inf_{P\in P_{2,\e}^\lambda}
\left(\fint_{Q_r} |\nabla (u_{\e, \lambda}  -P)|^2 \right)^{1/2}\\
&\qquad
\le C \left(\frac{r}{R}\right)^{1+\alpha} 
\left\{ \inf_{P\in P_{2, \e}^\lambda}
\left(\fint_{Q_R} |\nabla ( u_{\e, \lambda} -P) |^2\right)^{1/2}
+ R \| F\|_{C^\sigma (Q_R)}
\right\},
\endaligned
\end{equation}
 where
$C$ depends only on $d$, $\sigma$, $\mu$,  and $\alpha$.
\end{thm}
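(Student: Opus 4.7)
The plan is to mirror the iteration scheme from the proof of Theorem \ref{theorem-4.1}, but at one higher order, by combining Theorem \ref{theorem-3.1} with classical parabolic Schauder estimates for $\partial_t+\mathcal{L}_{0,\lambda}$ and invoking the second-order correctors $\chi_{k\ell}^\lambda$ of (\ref{chi-2}). First I would reduce to $R=2$ by translation and dilation, then work at an intermediate scale $r$ satisfying $(1+\sqrt{\lambda})\e \le \theta r < r < 1$ for some $\theta\in(0,1/4)$ to be chosen. Apply Theorem \ref{theorem-3.1} to produce $u_{0,\lambda}$ solving $(\partial_t+\mathcal{L}_{0,\lambda})u_{0,\lambda}=F$ on $Q_r$ with the approximation bound
$$
\left(\fint_{Q_{r/2}}|\nabla u_{\e,\lambda}-\nabla u_{0,\lambda}-(\nabla\chi^\lambda)^\e\nabla u_{0,\lambda}|^2\right)^{1/2} \le C\left(\tfrac{(1+\sqrt{\lambda})\e}{r}\right)^{\sigma_1}M,
$$
where $M=\big(\fint_{Q_r}|\nabla u_{\e,\lambda}|^2\big)^{1/2}+r\|F\|_{L^\infty(Q_r)}$ and $\sigma_1$ is the exponent from Theorem \ref{theorem-3.1} (which applies since $F\in C^\sigma\subset L^p$ for every $p>d+2$).

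Next, because $\widehat{A_\lambda}$ is constant and $F\in C^\sigma$, parabolic Schauder theory gives $u_{0,\lambda}\in C^{2,\alpha}(Q_{r/2})$ for any $\alpha<\sigma$. I would set
$$
P_0(x,t) = u_{0,\lambda}(0,0)+e_0 t+e_k x_k+e_{k\ell}x_k x_\ell,\qquad e_k=\partial_k u_{0,\lambda}(0,0),\quad e_{k\ell}=\tfrac12\partial_k\partial_\ell u_{0,\lambda}(0,0),
$$
and choose $e_0=2e_{k\ell}\widehat{a_{k\ell}^\lambda}$, so that by Lemma \ref{lemma-5.1} the associated $P_\e$ from (\ref{P-5}) belongs to $P_{2,\e}^\lambda$. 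Note that $e_0$ differs from $\partial_t u_{0,\lambda}(0,0)$ by $F(0,0)$; this $O(\|F\|_{L^\infty})$ correction is harmlessly absorbed into $M$. Schauder estimates then yield on $Q_{r/2}$
$$
|\nabla u_{0,\lambda}(x,t)-\nabla P_0(x,t)| \le C\left(\tfrac{|x|+|t|^{1/2}}{r}\right)^{1+\alpha}M,\qquad |e_k|+r|e_{k\ell}|\le CM.
$$

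The central algebraic step is to verify, by direct differentiation of (\ref{P-5}) and the symmetry $e_{k\ell}=e_{\ell k}$, the two-scale identity
$$
\nabla P_\e = \nabla P_0 + (\nabla\chi^\lambda)^\e \nabla P_0 + \e R_\e,
$$
in which $R_\e$ is a linear combination of $\chi_\ell^\lambda(x/\e,t/\e^2)$ and $(\nabla\chi_{k\ell}^\lambda)(x/\e,t/\e^2)$ with coefficients controlled by $|e_{k\ell}|$; by (\ref{2.3}) and (\ref{5.1}) its $L^2$-average on any cylinder of radius $\ge(1+\sqrt{\lambda})\e$ is $\lesssim |e_{k\ell}|\lesssim M/r$. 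Splitting
$$
\nabla u_{\e,\lambda}-\nabla P_\e = \left[\nabla u_{\e,\lambda}-\nabla u_{0,\lambda}-(\nabla\chi^\lambda)^\e\nabla u_{0,\lambda}\right] + \left(I+(\nabla\chi^\lambda)^\e\right)(\nabla u_{0,\lambda}-\nabla P_0) - \e R_\e,
$$
the three terms are bounded in $L^2(Q_{\theta r})$ by $C\theta^{-(d+2)/2}((1+\sqrt{\lambda})\e/r)^{\sigma_1}M$, $C\theta^{1+\alpha}M$, and $C(\e/r)M$ respectively, the middle bound combining the pointwise Schauder decay with the uniform $L^2$-average bound $\big(\fint_{Q_{\theta r}}|(\nabla\chi^\lambda)^\e|^2\big)^{1/2}\le C$ from (\ref{Q-e-1}). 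Fixing $\alpha<\alpha'<\sigma$, choosing $\theta$ small so that $C\theta^{1+\alpha'}\le\tfrac14\theta^{1+\alpha}$, then $r\ge C_\theta(1+\sqrt{\lambda})\e$ to absorb the remaining errors, and using that translates by elements of $P_{2,\e}^\lambda$ leave the excess invariant, yields the one-step decay
$$
\inf_{P\in P_{2,\e}^\lambda}\left(\fint_{Q_{\theta r}}|\nabla(u_{\e,\lambda}-P)|^2\right)^{1/2} + \theta r\|F\|_{C^\sigma(Q_{\theta r})} \le \theta^{1+\alpha}\left\{\inf_{P\in P_{2,\e}^\lambda}\left(\fint_{Q_{2r}}|\nabla(u_{\e,\lambda}-P)|^2\right)^{1/2}+2r\|F\|_{C^\sigma(Q_{2r})}\right\},
$$
which iterates exactly as in the proof of Theorem \ref{theorem-4.1} to give (\ref{5.3-0}).

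The main technical hurdle I anticipate is establishing this two-scale identity together with the $L^2$ bound on $R_\e$: the second-order corrector enters through $\e^2\chi_{k\ell}^\lambda(x/\e,t/\e^2)$, and although this term is pointwise $O(\e^2)$, its spatial gradient is only $O(\e)$, so the uniform-in-$\lambda$ energy bound (\ref{5.1}) on $\nabla\chi_{k\ell}^\lambda$ is essential for the argument to be scale-invariant. A secondary subtlety is the choice $e_0=2e_{k\ell}\widehat{a_{k\ell}^\lambda}$ required by Lemma \ref{lemma-5.1} to place $P_\e$ in $P_{2,\e}^\lambda$: it forces $P_0$ to deviate from the true Taylor polynomial of $u_{0,\lambda}$ by $F(0,0)\,t$, but this discrepancy is absorbed into $M$ and does not enlarge the final exponent.
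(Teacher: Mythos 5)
Your proposal is correct and follows essentially the same route as the paper's proof: reduce to $R=2$, approximate by $u_{0,\lambda}$ via Theorem \ref{theorem-3.1}, apply the classical $C^{2+\alpha}$ Schauder estimate to build $P_0$, pass to $P_\e\in P_{2,\e}^\lambda$ using Lemma \ref{lemma-5.1} and the comparison $\nabla P_\e-\nabla P_0-(\nabla\chi^\lambda)^\e\nabla P_0=O(\e)$, and iterate the resulting one-step decay exactly as in Theorem \ref{theorem-4.1}. The only differences are cosmetic (the paper normalizes $F(0,0)=0$ by subtracting $e_0t$ rather than choosing $e_0=2e_{k\ell}\widehat{a^\lambda_{k\ell}}$ outright, which is equivalent since $e_0t$ has no spatial gradient), and your explicit identification of the remainder $R_\e$ as containing both $\chi^\lambda_\ell$ and $\nabla\chi^\lambda_{k\ell}$ terms is in fact slightly more careful than the paper's display (\ref{5.3-7}).
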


\begin{proof}
By translation and dilation we may assume that
$R=2$ and $Q_2=B(0, 2)\times (-4, 0)$.
By subtracting $e_0 t$ from $u_{\e, \lambda} $, we may also assume that $F(0, 0)=0$, which implies 
$\|F\|_{L^\infty(Q_r)} \le C   \| F\|_{C^\sigma (Q_r)}$.
Let $(1+\sqrt{\lambda}) \e < \theta r < r<1$,
where $\theta\in (0, 1/4)$ is to be chosen later.
Let $u_{0, \lambda}$ be the weak solution of
$(\partial_t +\mathcal{L}_{0, \lambda}) u_{0, \lambda}=F$ in $Q_r$, given by Theorem \ref{theorem-3.1}.
By the classical $C^{2+\alpha}$ estimates for parabolic systems with constant coefficients,
\begin{equation}\label{5.3-1}
\aligned
& \Big|\frac{\partial u_{0, \lambda}}{\partial x_i} (x, t) -\frac{\partial u_{0, \lambda}}{\partial x_i} (0, 0)
-\frac{\partial^2 u_{0, \lambda}}{\partial x_j \partial x_i} (0, 0) x_j \Big|\\
&\le \Big| \frac{\partial u_{0, \lambda}}{\partial x_i} (x, t) - \frac{\partial u_{0, \lambda}}{\partial x_i} (x, 0)\Big|
+\Big|\frac{\partial u_{0, \lambda}}{\partial x_i} (x, 0) -\frac{\partial u_{0, \lambda}}{\partial x_i} (0, 0)
-\frac{\partial^2 u_{0, \lambda}}{\partial x_j \partial x_i} (0, 0) x_j \Big|\\
& \le C \theta^{1+\sigma}
\left\{ \left(\fint_{Q_r} | \nabla u_{0, \lambda}|^2 \right)^{1/2}
+r \| F \|_{C^\sigma (Q_r)} \right\}\\
 &\le C \theta^{1+\sigma}
\left\{ \left(\fint_{Q_{2r}} | \nabla u_{\e, \lambda}|^2 \right)^{1/2}
+r \| F \|_{C^\sigma (Q_r)} \right\}
\endaligned
\end{equation}
for any $(x, t)\in Q_{\theta r}$, 
where we have used (\ref{3.3-1}) for the last inequality.
Let $P_0 (x, t)=e_0 t +e_ix_i + e_{ij} x_i x_j$, where
\begin{equation}\label{e-0}
e_0 =\partial_t u_{0, \lambda} (0, 0),\ 
e_i =\frac{\partial u_{0, \lambda} }{\partial  x_i} (0, 0), \text{ and } \
e_{ij}=\frac12 \frac{\partial^2 u_{0, \lambda}}{\partial x_i \partial x_j} (0, 0).
\end{equation}
Note that
\begin{equation}\label{5.3-2}
(\partial_t +\mathcal{L}_{0, \lambda} ) P_0
  = e_0 -2 e_{ij} \widehat{a_{ij}^\lambda}
 =(\partial_t +\mathcal{L}_{0, \lambda}) u_0 (0, 0)=F(0, 0)=0,
 \end{equation}
and by (\ref{5.3-1}),
\begin{equation}\label{5.3-4}
\|\nabla (u_{0, \lambda} -P_0)\|_{L^\infty(Q_{\theta r})}
\le C \theta^{1+\sigma}
\left\{ \left(\fint_{Q_r} | \nabla u_{0, \lambda}|^2 \right)^{1/2}
+r \| F \|_{C^\sigma (Q_r)} \right\}.
\end{equation}
This, together with the inequality  (\ref{3.3}), gives
\begin{equation}\label{5.3-5}
\aligned
& \left(\fint_{Q_{\theta r}}
|\nabla u_{\e, \lambda} -\nabla P_0 -(\nabla \chi^\lambda)^\e (\nabla P_0) |^2 \right)^{1/2}\\
& \quad
\le 
C \left\{ \theta^{1+\sigma}
+ \left(\frac{(1+\sqrt{\lambda}}{r} \right)^\sigma \right\}
\left\{ 
 \left(\fint_{Q_{2r}} |\nabla u_{\e, \lambda}|^2 \right)^{1/2}
+ r  \| F\|_{C^\sigma(Q_{2r})}
 \right\}.
\endaligned
\end{equation}

Let $P_\e= P_\e (x, t)$ be given by (\ref{P-5}) with the same coefficients as those of $P_0$ in (\ref{e-0}).
Then $(\partial_t +\mathcal{L}_{\e, \lambda}) P_\e =(\partial_t +\mathcal{L}_{0, \lambda}) P_0=0$, and
\begin{equation}\label{5.3-7}
|\nabla P_\e -\nabla P_0 -(\nabla \chi^\lambda)^\e (\nabla P_0)|
\le \e |e_{k\ell} \nabla \chi^\lambda_{k\ell} (x/\e, t/\e^2)|.
\end{equation}
In view of (\ref{5.3-5}) we obtain 
\begin{equation}\label{5.3-6}
\aligned
& \left(\fint_{Q_{\theta r}}
|\nabla (u_{\e, \lambda} -P_\e)|^2 \right)^{1/2}\\
& \quad
\le 
C \left\{ \theta^{1+\sigma}
+ \left(\frac{(1+\sqrt{\lambda})\e }{r} \right)^\sigma 
\right\}
\left\{ 
 \left(\fint_{Q_{2r}} |\nabla u_{\e, \lambda}|^2 \right)^{1/2}
+ r \| F\|_{C^\sigma (Q_{2r})}
 \right\},
\endaligned
\end{equation}
where we have used (\ref{5.1}) and  the assumption that $\theta r \ge (1+\sqrt{\lambda} )\e$.

To proceed, we let
$$
\Psi (r) =\inf_{P\in P_{2, \e}^\lambda}
\left(\fint_{Q_r} |\nabla (u_{\e, \lambda} -P) |^2 \right)^{1/2}
+ r\| F\|_{C^\sigma (Q_r)}.
$$
It follows from (\ref{5.3-6})  that
$$
\Psi (\theta r)
\le 
C_0 \left\{ \theta^{1+\sigma}
+ \left(\frac{(1+\sqrt{\lambda})\e }{r} \right)^\sigma 
\right\} \Psi (2r)
$$
for $(1+\sqrt{\lambda}) \e < \theta r< r<1$,
where $C_0$ depends only on $d$, $\mu$ and $\sigma$.
Fix $\alpha \in (0, \sigma)$.
Choose $\theta \in (0, 1/4)$ so small that
$C_0\theta^{1+\sigma} \le (1/2) (\theta/2)^{1+\alpha}$.
With $\theta$ chosen, we may choose $C_1>1$
 so large that $ C_0 C_1^{-\sigma} \le (1/2) (\theta/2) ^{1+\alpha}$.
As a result, for $C_1 (1+\sqrt{\lambda}) \e < \theta r< r<1$, we have
$$
\Psi (\theta r) \le (\theta/2)^{1+\alpha} \Psi (2r).
$$
By a simple iteration  argument  this gives $\Psi (r)\le C r^{1+\alpha} \Psi (2)$ for any $(1+\sqrt{\lambda})  \e \le  r<  2$.
\end{proof}

\begin{remark}[Liouville property]
By letting  $\lambda=\e^{k-2}$ in 
Theorem \ref{C-5}  we obtain a $C^{2, \alpha}$ excess-decay estimate for 
$\partial_t +\mathcal{L}_\e$ in (\ref{operator-0}) for any $0<k<\infty$.
The estimate may be used 
to establish  a Liouville property  for the operator.
Indeed, let  $u_\e$ be a solution of $(\partial_t +\mathcal{L}_\e)u_\e=0$ in 
$\mathbb{R}^d \times (-\infty, t_0)$ for some $t_0\in \mathbb{R}$.
Suppose there exist $C_u>0$ and $\alpha \in (0, 1)$ such that 
\begin{equation}\label{Li-1}
\left(\fint_{Q_R (0, t_0)} |u_\e |^2 \right)^{1/2}
\le C_u R^{2+\alpha}
\end{equation}
for any $R>1$. By Cacciopoli's inequality it follows that
$$
\left(\fint_{Q_R (0, t_0)} |\nabla u_\e |^2 \right)^{1/2}
\le C R^{1+\alpha}
$$
for any $R>1$. This, together with (\ref{5.3-0}), implies that
$u_\e =P$ 
in $\mathbb{R}^d \times (-\infty, t_0)$ for some $P\in P_{2, \e}^\lambda $.
\end{remark}



\section{Boundary Lipschitz estimates}\label{section-7}

In this section we establish large-scale boundary Lipschitz estimates for the operator
$\partial_t +\mathcal{L}_{\e, \lambda}$,
where $\mathcal{L}_{\e, \lambda} =-\text{\rm div} \big( A_\lambda (x/\e, t/\e^2)\nabla \big)$.
As a consequence, we obtain the large-scale boundary  Lipschitz estimate
for $\partial_t +\mathcal{L}_\e$ in  Theorem \ref{m-theorem-3}.

Throughout this section we will assume that $\Omega$ is a bounded $C^{1, \alpha}$ domain for some $\alpha \in (0, 1)$. Let 
\begin{equation}\label{D}
\aligned
D_r (x_0, t_0)  &
= \big( B(x_0, r)\cap \Omega \big) \times (t_0-r^2, t_0),\\
\Delta_r (x_0, t_0) & = \big( B(x_0, r)\cap \partial\Omega\big) \times (t_0-r^2, t_0),
\endaligned
\end{equation}
where $x_0\in \partial\Omega$ and  $t_0\in \mathbb{R}$.
For $ \alpha\in (0, 1)$ and $\Delta_r=\Delta_r (x_0, t_0)$,
we use $C^{1+\alpha} (\Delta_r)$ to denote the parabolic $C^{1+\alpha}$ space of functions on $\Delta_r$ with 
the scale-invariant norm, 
$$
\| f\|_{C^{1+\alpha}(\Delta_r)}
:=\| f\|_{L^\infty (\Delta_r)}
+ r\|\nabla_{\tan} f\|_{L^\infty(\Delta_r)}
+ r
\|\nabla_{\tan} f \|_{C^\alpha (\Delta_r)}
+ 
\| f\|_{C_t^{\frac{1+\alpha}{2}} (\Delta_r)},
$$
where $\| g\|_{C^\alpha(\Delta_r)}$ is the smallest constant $C_0$ such that
$$
| g(x, t)-g(y, s)|\le C_0 r^{-\alpha} (|x-y| +|t-s|^{1/2})^\alpha 
$$
for any $(x, t), (y, s)\in \Delta_r$, and
$$
\| f\|_{C_t^{\frac{1+\alpha}{2}} (\Delta_r)}
=\inf \left\{ C: \ |f(y, \tau)-f(y, s)|\le C r^{-1-\alpha}  |\tau-s|^{\frac{1+\alpha}{2}} \text{ for any } (y, \tau ), 
(y, s)\in \Delta_r \right\}.
$$

\begin{thm}\label{thm-7.1}
Assume $A=A(y,s)$ satisfies (\ref{ellipticity}) and (\ref{periodicity}).
Suppose that  $(\partial_t +\mathcal{L}_{\e, \lambda}) u_{\e, \lambda} =F$ in $D_R=D_R (x_0, t_0)$
and $u_{\e, \lambda} =f$ on $\Delta_R= \Delta_R (x_0, t_0)$,
where $x_0\in \partial\Omega$,  $(1+\sqrt{\lambda} ) \e <R\le 1$, and
$F\in L^p(D_R)$ for some $p>d+2$.
Then, for any $(1+\sqrt{\lambda}) \e\le  r< R$,
\begin{equation}\label{7.1-0}
\aligned
& \left(\fint_{D_r} |\nabla u_{\e, \lambda} |^2 \right)^{1/2}\\
& \le C \left\{ \left(\fint_{D_R}
|\nabla u_{\e, \lambda}|^2 \right)^{1/2}
+ R^{-1} \| f\|_{C^{1+\alpha}(\Delta_R)}
+ R \left(\fint_{D_R} |F|^p \right)^{1/p} \right\},
\endaligned
\end{equation}
where $C$ depends only on $d$, $\mu$, $p$, $\alpha$, and $\Omega$.
\end{thm}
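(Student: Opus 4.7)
The plan is to parallel the interior argument of Theorem \ref{L-e-10}, replacing interior cylinders $Q_r$ by half-cylinders $D_r$ and the interior affine ``tangent planes'' in $P_{1,\e}^\lambda$ by boundary-compatible affine functions built from the Dirichlet data $f$. Three ingredients will drive the argument: (i) a boundary version of the approximation Theorem \ref{theorem-3.1}; (ii) the classical boundary $C^{1+\alpha}$ estimate for the constant-coefficient operator $\partial_t+\mathcal{L}_{0,\lambda}$ on the $C^{1,\alpha}$ domain $\Omega$ with $C^{1+\alpha}$ boundary data; and (iii) a geometric iteration down to the minimal scale $r\ge(1+\sqrt{\lambda})\,\e$ followed by a summation of the successive affine approximants exactly as in the proof of Theorem \ref{L-e-10}.

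First I would prove a boundary analog of Lemma \ref{lemma-3.4}. Given $u_{\e,\lambda}$ solving $(\partial_t+\mathcal{L}_{\e,\lambda})u_{\e,\lambda}=F$ in $D_{2r}$ with $u_{\e,\lambda}=f$ on $\Delta_{2r}$, let $u_{0,\lambda}$ solve the homogenized Dirichlet problem in $D_r$ with $u_{0,\lambda}=u_{\e,\lambda}$ on $\partial_p D_r$. Define the two-scale error $w_\e$ by (\ref{w-3}) with a smoothing cutoff $\eta_\delta$ (where $\delta=(1+\sqrt{\lambda})\,\e$) supported strictly inside $D_r$, so that $w_\e=0$ on $\partial_p D_r$. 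Then the identity of Lemma \ref{lemma-3.3}, an energy estimate on $w_\e$, and the boundary Meyers-type higher integrability of $\nabla u_{0,\lambda}$ on $C^{1,\alpha}$ domains would yield
\[
\left(\fint_{D_{r/2}}|\nabla u_{\e,\lambda}-\nabla u_{0,\lambda}-(\nabla\chi^\lambda)^\e\nabla u_{0,\lambda}|^2\right)^{1/2}\le C\left(\frac{(1+\sqrt{\lambda})\,\e}{r}\right)^\sigma\Psi(r),
\]
where $\Psi(r)$ collects the natural data $\|\nabla u_{\e,\lambda}\|_{L^2(D_{2r})}$, $r^{-1}\|f\|_{C^{1+\alpha}(\Delta_{2r})}$, and $r\|F\|_{L^p(D_{2r})}$.

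Next I would invoke the classical boundary $C^{1+\alpha}$ estimate for $\partial_t+\mathcal{L}_{0,\lambda}$: since $\widehat{A_\lambda}$ is uniformly elliptic independently of $\lambda$ (Lemma \ref{lemma-2.1}) and $\Omega$ is $C^{1,\alpha}$, there exists a constant vector $e\in\mathbb{R}^d$, whose tangential components are prescribed by $\nabla_{\tan}f(x_0,t_0)$, such that the affine function $L(x)=f(x_0,t_0)+e\cdot(x-x_0)$ satisfies
\[
\|\nabla u_{0,\lambda}-e\|_{L^\infty(D_{\theta r})}\le C\,\theta^{\alpha_p}\left\{\left(\fint_{D_r}|\nabla u_{0,\lambda}|^2\right)^{1/2}+r^{-1}\|f\|_{C^{1+\alpha}(\Delta_r)}+r\left(\fint_{D_r}|F|^p\right)^{1/p}\right\},
\]
with $\alpha_p=1-(d+2)/p$. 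Setting $\widetilde L_\e(x,t)=L(x)+\e\,e\cdot\chi^\lambda(x/\e,t/\e^2)$, which satisfies $(\partial_t+\mathcal{L}_{\e,\lambda})\widetilde L_\e=0$, and combining with the approximation estimate gives the single-step excess decay
\[
\inf_{\widetilde L}\left(\fint_{D_{\theta r}}|\nabla(u_{\e,\lambda}-\widetilde L_\e)|^2\right)^{1/2}\le C_0\left\{\theta^{\alpha_p}+\theta^{-(d+2)/2}\left(\tfrac{(1+\sqrt{\lambda})\,\e}{r}\right)^\sigma\right\}\Psi(2r).
\]
Fixing $\alpha\in(0,\alpha_p)$, choosing $\theta$ small so that $C_0\theta^{\alpha_p}\le\tfrac12(\theta/2)^{1+\alpha}$, and requiring $r\ge C_\theta(1+\sqrt{\lambda})\,\e$ to absorb the $\e/r$ term, I obtain a boundary $C^{1,\alpha}$ excess decay that iterates down to the minimal scale. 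Summing the resulting sequence of vectors $e_r$ as in the proof of Theorem \ref{L-e-10} yields $|e_r|\le C\,\Psi(R)$, from which (\ref{7.1-0}) follows.

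The main technical obstacle will be the boundary version of the approximation lemma: the cutoff $\eta_\delta$ must vanish in a parabolic $\delta$-neighborhood of $\partial_p D_r$ (with anisotropic width $\delta$ in $x$ and $\delta^2$ in $t$) so that $w_\e$ vanishes on $\partial_p D_r$ and the energy estimate closes. This forces a Hardy-type bound for $\nabla u_{0,\lambda}/\mathrm{dist}_p(\cdot,\partial_p D_r)$ in the boundary strip, combined with higher integrability of $\nabla u_{0,\lambda}$ up to $\Delta_r$; the anisotropy between the spatial and temporal scales and the $\lambda$-dependence of the dual-corrector bounds in Lemma \ref{lemma-3.1} (in particular the factor $(1+\lambda)$ in (\ref{3.12-1})) must be tracked carefully so that all constants in (\ref{7.1-0}) remain independent of $\lambda$.
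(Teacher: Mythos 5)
There is a genuine gap in the iteration step. The excess functional you propose to iterate, $\inf_{\widetilde L}\bigl(\fint_{D_{\theta r}}|\nabla(u_{\e,\lambda}-\widetilde L_\e)|^2\bigr)^{1/2}$, carries no record of the Dirichlet data, while the right-hand side of your one-step estimate contains $r^{-1}\|f\|_{C^{1+\alpha}(\Delta_r)}$ at full strength. To convert the one-step bound into a decay inequality you must replace $u_{\e,\lambda}$ by $u_{\e,\lambda}-\widetilde L_\e$ at scale $2r$, and then the boundary data becomes $f-L-\e\, e\cdot\chi^\lambda(x/\e,t/\e^2)$ on $\Delta_{2r}$. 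Its $C^{1+\alpha}(\Delta_{2r})$ norm involves the corrector $\chi^\lambda$, which under (\ref{ellipticity})--(\ref{periodicity}) is controlled only in $L^2$/$H^1$ on average (cf.\ (\ref{Q-e-1})) and has no pointwise, let alone $C^{1+\alpha}$, bound; moreover the leftover $r^{-1}\|f-L\|_{L^\infty}$ piece does not decay unless the affine normalization of $f$ is built into the quantity being iterated. So the scheme does not close: each step reintroduces an uncontrolled boundary term. A secondary issue is that your gradient-level approximation must hold on $D_{r/2}$, which touches the lateral boundary, where the weighted bounds on $\nabla^2 u_{0,\lambda}$ and $\partial_t u_{0,\lambda}$ used in Lemma \ref{lemma-3.4} are no longer an interior matter on a domain that is only $C^{1,\alpha}$.

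The paper sidesteps both difficulties by running the Campanato iteration one derivative lower. The quantity iterated is $\Psi(r;u)$ in (\ref{Psi}), namely $\frac1r\|u-E\cdot x-\beta\|_{L^2(T_r)}$ \emph{plus} the matching term $\|u-E\cdot x-\beta\|_{C^{1+\alpha}(I_r)}$, with plain affine comparison functions and no correctors; the boundary-data term is part of the excess, which is exactly what lets Lemma \ref{lemma-7.3} and Lemma \ref{lemma-7.4} be iterated via the abstract Lemma \ref{lemma-7.5}. The approximation input (Lemma \ref{lemma-7.2}) is correspondingly only an $L^2$ estimate on $u_{\e,\lambda}-u_{0,\lambda}$, obtained from the energy bound on $w_\e$ and Poincar\'e's inequality, and the corrector contribution $\e\chi^\lambda$ is harmless at this level because it is $O(\e)$ in $L^2$. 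The gradient estimate (\ref{7.1-0}) is then recovered at the very end by Caccioppoli's inequality. If you want to salvage a gradient-level scheme you would need boundary (Dirichlet) correctors or an excess functional that simultaneously tracks $\nabla u_{\e,\lambda}-e-e\nabla\chi^\lambda$ in the interior and $f-\beta-e\cdot x$ on $\Delta_r$; as written, your argument does not establish the theorem.
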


To prove Theorem \ref{thm-7.1}, 
we localize the boundary of $\Omega$.
Let $\psi: \mathbb{R}^{d-1} \to \mathbb{R}$ be a $C^{1, \alpha}$ function such that
 $\psi (0)=0$ and $\|\psi\|_{C^{1, \alpha} (\mathbb{R}^{d-1})} \le M$.
 Define
 \begin{equation}\label{T}
 \aligned
 T_r &  =\big\{
 (x^\prime, x_d):
 |x^\prime|< r \text{ and } \psi (x^\prime)
 < x_d< 100\sqrt{d} (M+1) \big\} \times (-r^2, 0), \\
I_r &  =\big\{
 (x^\prime, \psi (x^\prime)):
 |x^\prime|< r   \big\} \times (-r^2, 0),
\endaligned
\end{equation} 
where $0<r<\infty$

We begin with an approximation lemma.

\begin{lemma}\label{lemma-7.2}
Assume $A$ satisfies (\ref{ellipticity}) and (\ref{periodicity}).
Suppose that $(\partial_t +\mathcal{L}_{\e, \lambda}) u_{\e, \lambda}=F$
in $T_{2r}$ and $u_{\e, \lambda}=f$ on $I_{2r}$ for some $0<r\le 1$.
Then there exists a function  $u_{0, \lambda}$ such that
$(\partial_t +\mathcal{L}_{0, \lambda}) u_{0, \lambda} =F$ in $T_r$,
$u_{0, \lambda}=f$ on $I_r$, and
\begin{equation}\label{7.2-0}
\aligned
& \left(\fint_{T_r}
| u_{\e, \lambda} -u_{0, \lambda} |^2\right)^{1/2}\\
&
\le C \left( \frac{(1+\sqrt{\lambda}) \e}{r} \right)^\sigma
\left\{\left(\fint_{T_{2r}}
|u_{\e, \lambda}|^2 \right)^{1/2}
+  \| f\|_{C^{1+\alpha} (I_{2r})}
+ r^2 \left(\fint_{T_{2r}} |F|^2 \right)^{1/2} \right\},
\endaligned
\end{equation}
where $\sigma\in (0, 1)$ and $C>0$ depend only on $d$, $\mu$,  $p$, and $M$.
\end{lemma}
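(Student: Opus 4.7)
The plan is to adapt the interior approximation scheme of Lemma \ref{lemma-3.4} to the boundary setting. Two new features must be handled: the correctors $\chi^\lambda$ and $\phi^\lambda$ do not vanish on $\partial\Omega$, so a naive two-scale expansion does not satisfy the prescribed Dirichlet data on $I_r$; and the estimate is in $L^2$ rather than $H^1$, reflecting the unavoidable boundary layer of width $\e$ near $I_r$ that prevents a gradient-scale approximation.

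First, using a $C^{1,\alpha}$ chart for $\partial\Omega$ with parabolic scaling, extend $f$ from $I_{2r}$ to a function $\widetilde{f}$ on $T_{2r}$ satisfying $\|\widetilde{f}\|_{L^\infty} + r\|\nabla \widetilde{f}\|_{L^\infty} + r^2\|\nabla^2 \widetilde{f}\|_{L^\infty} + r^2\|\partial_t \widetilde{f}\|_{L^\infty} \le C\|f\|_{C^{1+\alpha}(I_{2r})}$. Replacing $u_{\e,\lambda}$ by $u_{\e,\lambda} - \widetilde{f}$ and absorbing the resulting new source into $F$ reduces the problem to the case $f=0$. Next, pick $\rho \in [r, 3r/2]$ by averaging, so that the $L^2$ traces of $u_{\e,\lambda}$ and $\nabla u_{\e,\lambda}$ on $\partial T_\rho$ are under control, and define $u_{0,\lambda}$ as the solution of $(\partial_t + \mathcal{L}_{0,\lambda}) u_{0,\lambda} = F$ in $T_\rho$ with $u_{0,\lambda} = u_{\e,\lambda}$ on $\partial_p T_\rho$; in particular $u_{0,\lambda} = 0$ on $I_\rho \supset I_r$. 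A boundary Caccioppoli inequality bounds $\|\nabla u_{\e,\lambda}\|_{L^2(T_{3r/2})}$ by the $L^2$ norm of $u_{\e,\lambda}$, and classical boundary regularity for the constant-coefficient operator on the $C^{1,\alpha}$ piece $I_\rho$ combined with a boundary Meyers estimate then yields an $L^q(T_r)$ bound on $\nabla u_{0,\lambda}$ for some $q>2$, with a constant independent of $\lambda$ (the uniform ellipticity of $\widehat{A_\lambda}$ from Lemma \ref{lemma-2.1} is essential here).

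Set $\delta = (1+\sqrt{\lambda})\e$ and choose a cutoff $\eta_\delta \in C_c^\infty(T_\rho)$ equal to $1$ outside a $3\delta$-neighborhood of $\partial_p T_\rho$ and supported outside a $2\delta$-neighborhood, with $|\nabla \eta_\delta| \le C/\delta$ and $|\partial_t \eta_\delta| + |\nabla^2 \eta_\delta| \le C/\delta^2$. With $K_\e(g) = S_\delta(\eta_\delta g)$, define the two-scale expansion
\[
w_\e = u_{\e,\lambda} - u_{0,\lambda} - \e (\chi_j^\lambda)^\e K_\e\!\left(\frac{\partial u_{0,\lambda}}{\partial x_j}\right) + \e^2 (\phi_{i(d+1)j}^\lambda)^\e \frac{\partial}{\partial x_i} K_\e\!\left(\frac{\partial u_{0,\lambda}}{\partial x_j}\right).
\]
The cutoff forces $w_\e = 0$ on $\partial_p T_\rho$. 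Applying Lemma \ref{lemma-3.3}, the equation for $(\partial_t + \mathcal{L}_{\e,\lambda}) w_\e$ takes the form \eqref{w-3.1}; each of the six source terms is estimated exactly as in the proof of Lemma \ref{lemma-3.4} via Lemmas \ref{lemma-S} and \ref{lemma-3.2}, the periodicity bounds \eqref{3.12}--\eqref{3.12-1}, and the $L^q$ control of $\nabla u_{0,\lambda}$ from the previous step, yielding $\|\nabla w_\e\|_{L^2(T_\rho)} \le C\delta^\sigma \cdot \{\text{RHS of } \eqref{7.2-0}\}$. Poincar\'e's inequality (valid since $w_\e$ vanishes on $\partial_p T_\rho$) converts this to the same bound for $\|w_\e\|_{L^2(T_\rho)}$ up to a factor of $\rho \le 2r$. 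Finally, the $L^2(T_r)$ norms of the two corrector terms in the definition of $w_\e$ are each bounded by $C\delta \cdot \|\nabla u_{0,\lambda}\|_{L^2(T_r)}$ via rescaling \eqref{2.3} and \eqref{3.12-1}, which gives \eqref{7.2-0} after absorbing $\delta/r$ into $(\delta/r)^\sigma$ since $\sigma \in (0,1)$ and $\delta < r$.

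The principal obstacle is the boundary Meyers step: obtaining an $L^q$ bound for $\nabla u_{0,\lambda}$ up to $I_\rho$ with a constant independent of $\lambda$, when the data on $\partial_p T_\rho \setminus I_\rho$ is only $H^{1/2}$ and inherited from the rough function $u_{\e,\lambda}$. The averaging choice of $\rho \in [r, 3r/2]$ is precisely what allows this trace to be controlled in $L^2$, while uniform ellipticity of $\widehat{A_\lambda}$ (Lemma \ref{lemma-2.1}) keeps the Meyers exponent and constant $\lambda$-free. A secondary technical point worth flagging is that the $\lambda$-growth in the dual-corrector bound \eqref{3.12-1} is exactly compensated by the $\e^2$ prefactor on the $\phi^\lambda_{k(d+1)j}$ term, yielding the correct $\delta$-scaling in every estimate, as in Lemma \ref{lemma-3.4}.
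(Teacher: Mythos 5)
Your argument follows essentially the same route as the paper's: define $u_{0,\lambda}$ as the solution of the constant-coefficient initial-Dirichlet problem in a slightly smaller cylinder with data $u_{\e,\lambda}$ on the parabolic boundary, control $\nabla u_{0,\lambda}$ in $L^q$ for some $q>2$ via Caccioppoli and Meyers-type estimates (with constants independent of $\lambda$), run the two-scale expansion $w_\e$ of \eqref{w-3} with the boundary cutoff and $\e$-smoothing exactly as in Lemma \ref{lemma-3.4}, and conclude with Poincar\'e's inequality together with $L^2$ bounds on the corrector terms. The averaging choice of the radius $\rho\in[r,3r/2]$ is a harmless variant of the paper's use of the pair $T_1,T_2$.

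One caveat: the preliminary reduction to $f=0$ is both unnecessary and, as written, flawed. Subtracting the extension $\widetilde f$ turns the equation into
$(\partial_t+\mathcal{L}_{\e,\lambda})(u_{\e,\lambda}-\widetilde f)=F-\partial_t\widetilde f+\text{\rm div}\big(A_\lambda(x/\e,t/\e^2)\nabla\widetilde f\big)$,
and the last term is the divergence of a merely bounded, rapidly oscillating field; it is not an $L^2$ function and cannot be ``absorbed into $F$'' in a way compatible with the approximation machinery of Section \ref{section-3}, which takes $F\in L^2$ and would require additional flux correctors to homogenize a divergence-form datum. The paper sidesteps this entirely: prescribing $u_{0,\lambda}=u_{\e,\lambda}$ on the full parabolic boundary automatically gives $u_{0,\lambda}=f$ on $I_r$, and $\|f\|_{C^{1+\alpha}(I_{2r})}$ enters only through the boundary Caccioppoli/Meyers estimate for $u_{\e,\lambda}$. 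If you simply drop the reduction step, the remainder of your argument is sound.
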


\begin{proof}
The proof is similar to that of Theorem \ref{theorem-3.1}.
By dilation we may assume $r=1$.
Let $u_{0, \lambda}$ be the weak solution to the initial-Dirichlet problem,
$$
(\partial_t +\mathcal{L}_{0, \lambda}) u_{0, \lambda} 
=F \quad \text{ in } T_1 \quad 
\text{ and } \quad
u_{0, \lambda} =u_{\e, \lambda} \quad
\text{ on } \partial_p T_1.
$$
It follows by the Meyers-type estimates and Cacciopoli's inequality for parabolic systems that
\begin{equation}\label{7.2-1}
\aligned
& \left(\fint_{T_1} |\nabla u_{0, \lambda}|^q\right)^{1/q}
 \le  C \left(\fint_{T_1} |u_{\e, \lambda}|^q \right)^{1/q}\\
&\le C \left\{
\left( \fint_{T_2} |u_{\e, \lambda} |^2 \right)^{1/2}
+ \left(\fint_{T_2} |F|^2\right)^{1/2}
+ \| f \|_{C^{1+\alpha}(I_2)}
\right\},
\endaligned
\end{equation}
where $q>2$ and $C>0$ depend only on $d$, $\mu$, $\alpha$ and $M$.
To see (\ref{7.2-0}),
we define $w_{\e}$ as in (\ref{w-3}).
Using the same argument as in the proof of Theorem \ref{theorem-3.1},
we may show that
\begin{equation}\label{7.2-2}
\left(\fint_{T_1} |\nabla w_\e|^2 \right)^{1/2}
\le C \delta^\sigma \left(\fint_{T_1} |\nabla u_{0, \lambda}|^q \right)^{1/q},
\end{equation}
where $\delta =(1+\sqrt{\lambda} ) \e$ and $\sigma =\frac12 -\frac{1}{q}>0$.
Since $w_\e=0$ on $\partial_p T_1$,
it follows from Poincar\'e's inequality and (\ref{7.2-1}) that
\begin{equation}\label{7.2-3}
\aligned
&\left(\fint_{T_1} |w_\e|^2 \right)^{1/2}\\
& \le C \delta^\sigma 
 \left\{
\left( \fint_{T_2} |u_{\e, \lambda} |^2 \right)^{1/2}
+ \left(\fint_{T_2} |F|^2\right)^{1/2}
+ \| f \|_{C^{1+\alpha} (I_2)}
\right\}.
\endaligned
\end{equation}
This yields (\ref{7.2-0}),
as $\| w_\e -(u_{\e, \lambda} -u_{0, \lambda}) \|_{L^2(T_1)}$
is also bounded by the right-hand side of (\ref{7.2-3}).
\end{proof}

For  a function $u$  in $T_r$, define
\begin{equation}\label{Psi}
\Psi (r; u)
=\frac{1}{r} \inf_{\substack{{ E\in \mathbb{R}^d} \\ \beta\in \mathbb{R}}}
\left\{
\left(\fint_{T_r}
|u- E\cdot x -\beta |^2 \right)^{1/2}
+ \| u -E\cdot x -\beta \|_{C^{1+\alpha} (I_r)}\right\}.
\end{equation}

\begin{lemma}\label{lemma-7.3}
Suppose that $(\partial_t +\mathcal{L}_{0, \lambda}) u=F$ in $T_r$,
where $0<r\le 1$ and $F\in L^p(T_r)$ for some $p>d+2$.
Then there exists $\theta \in (0, 1/4)$, depending only on 
$d$, $\mu$, $\alpha$,  $p$, and $M$, such that
\begin{equation}\label{7.3-0}
\Psi (\theta r; u ) + \theta r \left(\fint_{T_{\theta r}} |F|^p\right)^{1/p}
 \le \frac12 
 \left\{ \Psi (r; u)
 + r \left(\fint_{T_r} |F|^p \right)^{1/p} \right\}.
\end{equation}
\end{lemma}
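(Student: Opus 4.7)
The plan is to carry out one step of a Campanato-type iteration by comparing $u$, at scale $\theta$, to the best affine approximant at scale $1$ plus the tangent plane of the resulting residual at the boundary point. The essential input is a boundary $C^{1+\alpha_0}$ estimate for the constant-coefficient operator $\partial_t + \mathcal{L}_{0,\lambda}$ on the $C^{1,\alpha}$ domain $T_1$ with $C^{1+\alpha}$ boundary data and $L^p$ source, for some $\alpha_0 \in (0, \min(\alpha, 1 - (d+2)/p))$ depending only on $d,\mu,\alpha, p, M$; this is classical for constant-coefficient parabolic operators in $C^{1,\alpha}$ domains.

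By translation and rescaling I may take $r = 1$. Let $P_0(x) = E_0 \cdot x + \beta_0$ achieve the infimum in $\Psi(1;u)$ and set $w := u - P_0$, so $(\partial_t + \mathcal{L}_{0,\lambda})w = F$ in $T_1$ with boundary data $g := w|_{I_1}$ and $\|g\|_{C^{1+\alpha}(I_1)} + \|w\|_{L^2(T_1)} \le 2\Psi(1;u)$. The boundary regularity then gives $\|w\|_{C^{1+\alpha_0}(\overline{T_{1/2}})} \le C(\Psi(1;u) + \|F\|_{L^p(T_1)}) =: C\mathcal{M}$. I define $E_1 := \nabla w(0,0)$, $\beta_1 := w(0,0)$, $E' := E_0 + E_1$, $\beta' := \beta_0 + \beta_1$, so that $u - E'\cdot x - \beta' = w - E_1\cdot x - \beta_1$, and the pointwise Taylor bound
$$|w(x,t) - E_1\cdot x - \beta_1| \le C(|x| + |t|^{1/2})^{1+\alpha_0}\mathcal{M}$$
is the workhorse for the remainder of the argument.

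The $L^2$-part of $\Psi(\theta;u)$ evaluated at $(E',\beta')$ is bounded directly by $C\theta^{1+\alpha_0}\mathcal{M}$. For the $C^{1+\alpha}(I_\theta)$-part I decompose the norm into its four defining pieces, noting that on $I_\theta$ one has $u - E'\cdot x - \beta' = g - E_1\cdot x - \beta_1$. The $L^\infty$ and tangential-$L^\infty$ pieces follow immediately from the pointwise bounds on $w - E_1 \cdot x - \beta_1$ and $\nabla w - E_1$; the time-Hölder piece reduces to $\|g\|_{C_t^{(1+\alpha)/2}(I_1)}$ and gains a factor $\theta^{1+\alpha}$ from the scale change because $E_1\cdot x + \beta_1$ is $t$-independent. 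The tangential $C^\alpha$-seminorm, which is the delicate one, comes from
$$\nabla_{\tan}(g - E_1\cdot x - \beta_1)(x,t) = \nabla_{\tan} g(x,t) - P_{T_x\partial\Omega}(E_1),$$
whose increment at $(x,t),(y,s)\in I_\theta$ I split as $P_{T_x}(\nabla_{\tan} g(x,t) - \nabla_{\tan} g(y,s)) + (P_{T_x} - P_{T_y})(\nabla_{\tan} g(y,s) - E_1)$. The first summand is controlled via $[\nabla_{\tan} g]_{C^\alpha(I_1)} \le \|g\|_{C^{1+\alpha}(I_1)}$; the second uses $|P_{T_x} - P_{T_y}| \le C[\nu]_{C^\alpha}|x-y|^\alpha$, a consequence of the $C^{1,\alpha}$ regularity of $\partial\Omega$, together with the bound $|E_1|\le C\mathcal{M}$. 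Each of the four pieces thus contributes at most $C\theta^{1+\min(\alpha,\alpha_0)}\mathcal{M}$.

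Collecting everything, $\Psi(\theta;u) \le C_1\theta^{\alpha_0}\mathcal{M}$, while $\theta(\fint_{T_\theta}|F|^p)^{1/p}\le C_2\theta^{1-(d+2)/p}(\fint_{T_1}|F|^p)^{1/p}$ has a strictly positive exponent because $p>d+2$. Choosing $\theta\in(0,1/4)$ so small that $C_1\theta^{\alpha_0} + C_2\theta^{1 - (d+2)/p} \le 1/2$ yields the claim at $r=1$; the general $r\le 1$ case follows by rescaling because the $C^{1,\alpha}$-norm of the rescaled profile $\tilde\psi(y') = \psi(ry')/r$ is bounded by $\|\psi\|_{C^{1,\alpha}}$ for $r\le 1$. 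The main technical obstacle is the tangential $C^\alpha$-seminorm bookkeeping above: one must split the increment and exploit the Hölder continuity of the tangent-space projection from the $C^{1,\alpha}$ hypothesis on $\partial\Omega$, which is precisely where that regularity enters.
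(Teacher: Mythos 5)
Your proposal is correct and follows essentially the same route as the paper's proof: both rest on the boundary $C^{1+\sigma}$ estimate (with $\sigma<\min(\alpha,1-\tfrac{d+2}{p})$) for the constant-coefficient operator in a $C^{1,\alpha}$ cylinder, the first-order Taylor expansion of the solution at the origin, the fact that affine functions are exact solutions so that $u$ may be replaced by $u-E\cdot x-\beta$, and a final choice of small $\theta$. The differences are cosmetic --- you subtract the minimizing affine function at the outset rather than at the end --- and your bookkeeping of the four pieces of the scale-invariant $C^{1+\alpha}(I_\theta)$ norm (in particular the tangential $C^\alpha$ seminorm via the H\"older continuity of the tangent-plane projection) supplies detail that the paper's ``it follows that'' elides.
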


\begin{proof}
Choose $\sigma \in (0, 1)$ such that $\sigma <\min (\alpha, 1-\frac{d+2}{p})$.
The proof uses the boundary $C^{1+ \sigma}$ estimate for second-order parabolic
systems with constant coefficients in $C^{1, \alpha}$ cylinders.
Let $E_0=\nabla u (0, 0)$ and $\beta_0 =u (0, 0)$.
Then, for any $(x, t)\in T_{r/2}$,
$$
\aligned
 &| u(x, t) - E_0\cdot x -\beta_0|\\
& \le C  (|x| + |t|^{1/2} )^{1+\sigma}
\left\{
\left(\fint_{T_{r}} |u|^2 \right)^{1/2}
+ \| u\|_{C^{1+\alpha} (\Delta_r)}
+ r^2 \left(\fint_{T_r} |F|^p\right)^{1/p} \right\},
\endaligned
$$
where $C$ depends only on $d$, $\mu$,  $\alpha$, $p$, and $M$.
It follows that the left-hand side of (\ref{7.3-0}) is bounded by
$$
\frac{C_0 \theta^\sigma}{r}
\left\{
\left(\fint_{T_{r}} |u|^2 \right)^{1/2}
+ \| u\|_{C^{1+\alpha} (\Delta_r)}
+ r^2 \left(\fint_{T_r} |F|^p\right)^{1/p} \right\}.
$$
Since $(\partial_t +\mathcal{L}_{0, \lambda}) (E\cdot x +\beta)=0$ for any $E\in \mathbb{R}^d$ and
$\beta\in  \mathbb{R}$, we may replace $u$ by $u-E\cdot x -\beta$.
As a result,  we see that the left-hand side of (\ref{7.3-0}) is bounded by 
$$
C_0 \theta^\sigma \left\{ \Psi (r; u)
 + r \left(\fint_{T_r} |F|^p \right)^{1/p} \right\}.
 $$
To finish the proof,
we choose $\theta\in (0, 1/4)$ so small that $C_0\theta^\sigma \le (1/2)$.
\end{proof}

\begin{lemma}\label{lemma-7.4}
Suppose that $(\partial_t +\mathcal{L}_{\e, \lambda}) u_{\e, \lambda}=F$ in $T_2$
and $u=f$ on $I_{2}$,
where $(1+\sqrt{\lambda})\e < 1$ and $F\in L^p(T_{2})$ for some $p>d+2$.
Let $\theta\in (0, 1/4) $ be given by Lemma \ref{lemma-7.3}.
Then for any $(1+\sqrt{\lambda}) \e\le  r\le 1$, 
\begin{equation}\label{7.4-0}
\aligned
& \Psi (\theta r; u_{\e, \lambda})
+\theta r \left(\fint_{T_{\theta r}} |F|^p\right)^{1/p}\\
&\le \frac12 \left\{
\Psi (r; u_{\e, \lambda})
+ r \left(\fint_{T_ r} |F|^p\right)^{1/p}\right\}\\
& \quad
+ C \left(\frac{(1+\sqrt{\lambda}) \e}{r}\right)^\sigma
 \left\{ 
 \frac{1}{r}\left(\fint_{T_{2r}} |u_{\e, \lambda}|^2 \right)^{1/2}
+ r \left(\fint_{T_{2r}} |F|^p \right)^{1/p}
+ r^{-1}\| f\|_{C^{1+\alpha} (I_{2r})}
\right\},
\endaligned
\end{equation}
where $C$ depends only on $d$, $\mu$, $p$, $\alpha$ and $M$.
\end{lemma}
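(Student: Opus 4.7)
The plan is to derive (\ref{7.4-0}) by chaining the approximation Lemma \ref{lemma-7.2} with the constant-coefficient one-step iteration Lemma \ref{lemma-7.3}, exploiting the crucial cancellation that the approximant produced by Lemma \ref{lemma-7.2} carries the \emph{same} Dirichlet data $f$ as $u_{\e,\lambda}$ on the lateral boundary $I_r$.

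First I would apply Lemma \ref{lemma-7.2} at scale $r$ (with data on $T_{2r}$), which is permitted since $(1+\sqrt\lambda)\e\le r\le 1$. This produces $u_{0,\lambda}$ defined in $T_r$ with $(\partial_t+\mathcal L_{0,\lambda})u_{0,\lambda}=F$ in $T_r$, $u_{0,\lambda}=f$ on $I_r$, and the $L^2$ error bound
\[
\left(\fint_{T_r}|u_{\e,\lambda}-u_{0,\lambda}|^2\right)^{1/2}\le C\left(\frac{(1+\sqrt\lambda)\e}{r}\right)^\sigma\mathcal E(2r),
\]
where $\mathcal E(2r):=\bigl(\fint_{T_{2r}}|u_{\e,\lambda}|^2\bigr)^{1/2}+\|f\|_{C^{1+\alpha}(I_{2r})}+r^2\bigl(\fint_{T_{2r}}|F|^2\bigr)^{1/2}$. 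Next I would apply Lemma \ref{lemma-7.3} to $u_{0,\lambda}$ at scale $r$ to contract $\Psi(r;u_{0,\lambda})+r\bigl(\fint_{T_r}|F|^p\bigr)^{1/p}$ by the factor $1/2$ at scale $\theta r$, with $\theta$ the fixed constant from Lemma \ref{lemma-7.3}.

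The transfer from $u_{0,\lambda}$ back to $u_{\e,\lambda}$ is the heart of the argument. Setting $v:=u_{\e,\lambda}-u_{0,\lambda}$, we have $v\equiv 0$ on $I_r\supset I_{\theta r}$. Consequently, if $(E_0,\beta_0)$ optimizes $\Psi(\rho;u_{0,\lambda})$ for $\rho\in\{r,\theta r\}$, plugging $(E_0,\beta_0)$ into the $\Psi$-functional for $u_{\e,\lambda}$ leaves the $C^{1+\alpha}(I_\rho)$-piece unchanged (both functions coincide with $f$ on $I_\rho$) while the $L^2$-piece changes by at most $\bigl(\fint_{T_\rho}|v|^2\bigr)^{1/2}$. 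This yields the symmetric triangle inequality
\[
\bigl|\Psi(\rho;u_{\e,\lambda})-\Psi(\rho;u_{0,\lambda})\bigr|\le \rho^{-1}\left(\fint_{T_\rho}|v|^2\right)^{1/2},\quad \rho\in\{r,\theta r\}.
\]
Combining the three ingredients, using $T_{\theta r}\subset T_r$ (which costs only the harmless factor $\theta^{-(d+2)/2}$ absorbed into $C$) together with $\|F\|_{L^2(T_{2r})}\le\|F\|_{L^p(T_{2r})}$ on the bounded set $T_{2r}$, and rewriting $r^{-1}\mathcal E(2r)$ in the form displayed on the right-hand side of (\ref{7.4-0}), the claim follows.

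The principal obstacle is the transfer step: Lemma \ref{lemma-7.2} only controls $v$ in $L^2$, which \emph{a priori} is too weak to handle the $C^{1+\alpha}(I_\rho)$ boundary contribution to $\Psi$. The argument closes only because the homogenized problem in Lemma \ref{lemma-7.2} is set up with matching Dirichlet data $f$ on $I_r$, so $v$ vanishes identically there and its boundary piece in $\Psi$ drops out. Were $u_{0,\lambda}$ defined by any alternative construction (e.g., prescribing $u_{\e,\lambda}$'s trace on the full parabolic boundary $\partial_p T_r$), the $C^{1+\alpha}$-norm of $v$ on $I_\rho$ would not vanish and the iteration would not close.
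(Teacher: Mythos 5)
Your proposal is correct and follows essentially the same route as the paper: approximate by the homogenized solution of Lemma \ref{lemma-7.2} with matching Dirichlet data $f$ on $I_r$, apply the one-step contraction of Lemma \ref{lemma-7.3} to $u_{0,\lambda}$, and transfer back via the triangle inequality for $\Psi$, in which the $C^{1+\alpha}$ boundary term cancels precisely because $u_{\e,\lambda}-u_{0,\lambda}$ vanishes on $I_r$. Your closing remark correctly identifies the same cancellation the paper relies on.
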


\begin{proof}
Fix $(1+\sqrt{\lambda}) \e \le r\le 1$.
Let $u_{0, \lambda}$ be the solution of $(\partial_t +\mathcal{L}_{0, \lambda}) u_{0, \lambda}=F$
in $T_{r}$ with $u_{0, \lambda}=f$ on $I_r$, given by Lemma \ref{lemma-7.2}.
Observe that
$$
\aligned
\Psi (\theta r;  & u_{\e, \lambda})
+ \theta r \left( \fint_{T_{\theta r}} |F|^p\right)^{1/p}\\
 & \le \Psi (\theta r; u_{0, \lambda})  +
 \theta r \left( \fint_{T_{\theta r}} |F|^p\right)^{1/p}
 +\frac{1}{\theta r}
 \left(\fint_{T_{\theta r}} |u_{\e, \lambda} -u_{0, \lambda}|^2 \right)^{1/2}\\
 &\le 
 \frac12 \left\{
  \Psi ( r; u_{0, \lambda}) +
  r \left( \fint_{T_{ r}} |F|^p\right)^{1/p}\right\}
  +\frac{C_\theta}{ r}
 \left(\fint_{T_{ r}} |u_{\e, \lambda} -u_{0, \lambda}|^2 \right)^{1/2}\\
 & \le 
 \frac12 \left\{
  \Psi ( r; u_{\e, \lambda}) +
  r \left( \fint_{T_{ r}} |F|^p\right)^{1/p}\right\}
  +\frac{C_\theta}{ r}
 \left(\fint_{T_{ r}} |u_{\e, \lambda} -u_{0, \lambda}|^2 \right)^{1/2},
\endaligned
$$
where we have used Lemma \ref{lemma-7.3} for the second inequality.
This, together with Lemma \ref{lemma-7.2},
gives (\ref{7.4-0}).
\end{proof}

The proof of the following lemma may be found in \cite{Shen-book}.

\begin{lemma}\label{lemma-7.5}
Let $H(r)$ and $h(r)$ be two nonnegative and continuous functions on the interval 
$[0, 1]$. Let $0<\delta<(1/4)$. Suppose that there exists a constant $C_0$ such that
\begin{equation}\label{7.5-0}
\max_{r\le t\le 2r} H(t) \le C_0 H(2r)
\quad \text{ and } \quad 
\max_{r\le, t, s\le 2r} |h(t) -h(s)|\le C_0 H(2r)
\end{equation}
for any $r\in [\delta, 1/2]$.
Suppose further that
\begin{equation}\label{7.4-1}
H(\theta r) \le \frac 12 H(r)
+C_0 \eta (\delta/ r) 
\Big\{ H(2r) + h(2r) \Big\}
\end{equation}
for any $r\in [\delta, 1/2]$, where $\theta\in (0, 1/4)$ and $\eta (t)$ is a nonnegative and
nondecreasing function on $[0, 1]$ such that $\eta (0)=0$ and
\begin{equation}\label{7.4-2}
\int_0^1 \frac{\eta (t)}{t} \, dt <\infty.
\end{equation}
Then
\begin{equation}\label{7.4-3}
\max_{\delta\le r\le 1} 
\big\{ H(r) + h(r) \big\}
\le C \big\{ H(1) + h(1) \big\},
\end{equation}
where $C$ deepnds only on $C_0$, $\theta$, and the function $\eta (t)$.
\end{lemma}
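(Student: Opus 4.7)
The plan is to discretize at geometric scales $r_k = \theta^k$ and reduce the lemma to a scalar Gronwall-type recursion for a weighted sum $\Psi_k = \alpha H(r_k) + h(r_k)$, chosen so that the contraction factor $1/2$ in the hypothesis on $H$ exactly cancels the growth of $h$ in each step. Let $K$ be the largest integer with $\theta^K \ge \delta$, and write $H_k = H(r_k)$, $h_k = h(r_k)$, $\Phi_k = H_k + h_k$, $\eta_k = \eta(\delta/r_k)$. The first step is to upgrade the dyadic oscillation bounds (\ref{7.5-0}) to estimates at scale $\theta$: iterating $\max_{[r,2r]}H \le C_0 H(2r)$ across the $\lceil \log_2(1/\theta) \rceil$ dyadic subintervals covering $[r_{k+1}, r_k]$ yields $H(r) \le C_\theta H_k$ on that interval, and the same iteration of the oscillation of $h$ gives $|h(r) - h_k| \le C_\theta H_k$ there. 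Combining with the main recursion (\ref{7.4-1}) applied at $r = r_k$ produces the system
\begin{equation*}
H_{k+1} \le \tfrac{1}{2} H_k + C_1 \eta_k \Phi_{k-1}, \qquad h_{k+1} \le h_k + C_\theta H_k.
\end{equation*}

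Next, set $\alpha = 2 C_\theta$ and $\Psi_k = \alpha H_k + h_k$, so $\Phi_k \le \Psi_k \le \alpha \Phi_k$. Multiplying the first recursion by $\alpha$ and adding the second, the calibration $\alpha/2 + C_\theta = \alpha$ gives
\begin{equation*}
\Psi_{k+1} \le \Psi_k + \alpha C_1 \eta_k \Phi_{k-1} \le \Psi_k + C_2 \eta_k \Psi_{k-1}.
\end{equation*}
Setting $M_k = \max_{0 \le j \le k} \Psi_j$, this yields $M_{k+1} \le (1 + C_2 \eta_k) M_k$, hence $M_K \le M_0 \exp\!\bigl( C_2 \sum_{k=0}^{K-1} \eta_k \bigr)$. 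Since $\eta$ is nondecreasing and the $r_k$'s are geometrically spaced, the Riemann-sum comparison
\begin{equation*}
\sum_{k=0}^{K-1} \eta(\delta/r_k) \le \frac{1}{\log(1/\theta)} \int_\delta^1 \eta(\delta/s) \frac{ds}{s} = \frac{1}{\log(1/\theta)} \int_\delta^1 \eta(u) \frac{du}{u}
\end{equation*}
is controlled by the Dini integral (\ref{7.4-2}) uniformly in $\delta$. Therefore $\Psi_k \le M_K \le C \Psi_0 \le C'(H(1) + h(1))$ for every $k \le K$, and consequently $\Phi_k \le C'(H(1)+h(1))$. To promote the bound from the geometric scales to an arbitrary $r \in [\delta, 1]$, I apply the oscillation estimate from the first step once more: for $r \in [r_{k+1}, r_k]$, $\Phi(r) \le H(r) + h_k + |h(r) - h_k| \le C_\theta \Phi_k + C_\theta H_k \le C'' \Phi_0$, which is (\ref{7.4-3}).

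The main obstacle is the coupling between $H$ and $h$: because $h$ can grow by $C_\theta H$ at each step while the contraction in $H$'s recursion is only $1/2$, the naive sum $\Phi = H + h$ satisfies no useful contraction and a direct iteration would yield an unacceptable factor of $\exp(C K) = \exp(C \log(1/\delta))$. The weight $\alpha = 2 C_\theta$ is precisely what turns the system into a single scalar recursion whose forcing term $C_2 \eta_k \Psi_{k-1}$ has a summable multiplier, making the exponential $\exp(C_2 \sum \eta_k)$ a genuine constant independent of $\delta$. Without the Dini hypothesis (\ref{7.4-2}), the argument breaks: $\sum \eta_k$ would be unbounded, and one could only recover a bound logarithmic in $1/\delta$.
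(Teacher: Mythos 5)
The paper does not actually prove this lemma; it cites \cite{Shen-book} for the proof. Your argument is correct and is, in substance, the standard iteration proof of this Campanato-type lemma: discretize at scales $\theta^k$, upgrade the dyadic hypotheses to comparability across one $\theta$-step, derive the coupled recursion for $H_k$ and $h_k$, and close it using the Dini condition. The one genuinely nice touch is the calibrated weight $\alpha=2C_\theta$, which turns the coupled system into the single inequality $\Psi_{k+1}\le(1+C_2\eta_k)\Psi_k$ whose product telescopes against $\sum_k\eta(\delta\theta^{-k})\le C\int_0^1\eta(t)\,dt/t$; the argument in \cite{Shen-book} instead closes the iteration by summing the recursion over $k$ and absorbing, after first reducing to the case where the tail $\sum_k\eta_k$ is small. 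Both routes work and give the same dependence of $C$ on $C_0$, $\theta$, $\eta$.

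Two small points you should make explicit in a final write-up, though neither is a real gap. First, the recursion $H_{k+1}\le\frac12 H_k+C_1\eta_k\Phi_{k-1}$ only makes sense for $k\ge 1$, and (\ref{7.4-1}) cannot be applied at $r=r_0=1\notin[\delta,1/2]$; the base step $\Psi_1\le C\Psi_0$ must instead come from the doubling hypotheses (\ref{7.5-0}) alone, after which the Gronwall product starts at $k=1$. Second, in the dyadic chains of Step 1 the radii at which (\ref{7.5-0}) is applied must stay in $[\delta,1/2]$; near the bottom scale one applies the hypothesis at $\rho=\delta$ rather than at $\rho=t/2$, and near the top one stops doubling at $1/2$. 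These are routine adjustments that only affect the value of $C_\theta$.
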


We are now ready to give the proof of Theorem \ref{thm-7.1}

\begin{proof}[\bf Proof of Theorem \ref{thm-7.1}]
By translation and dilation we may assume that $(x_0, t_0)=(0, 0)$ and
$R=1$. Moreover, it suffices to show that for $(1+\sqrt{\lambda}) \e\le r< 2$,
\begin{equation}\label{7.5-1}
  \left(\fint_{T_r} |\nabla u_{\e, \lambda} |^2 \right)^{1/2}\\
 \le C \left\{ \left(\fint_{T_2}  |\nabla u_{\e, \lambda}|^2 \right)^{1/2}
 + \| f\|_{C^{1+\alpha} (I_2)}
 + \left(\fint_{T_2} |F|^p \right)^{1/p} \right\},
 \end{equation}
 where $(\partial_t + \mathcal{L}_{\e, \lambda}) u_{\e, \lambda} =F$ in $T_2$
 and $u_{\e, \lambda} =f$ on $I_2$.
To this end, we apply Lemma \ref{lemma-7.5} with
$$
H(r)=\Psi (r; u_{\e, \lambda})
+ r \left(\fint_{T_r} |F|^p\right)^{1/p}
$$
and $h(t)=|E_r|$, where $E_r$ is a vector in $\mathbb{R}^d$ such that
$$
\Psi (r; u_{\e, \lambda})
=\frac{1}{r} \inf_{ \beta\in \mathbb{R}}
\left\{
\left(\fint_{T_r}
|u_{\e, \lambda}- E_r\cdot x -\beta |^2 \right)^{1/2}
+ \| f -E_r\cdot x -\beta \|_{C^{1+\alpha} (I_r)}\right\}.
$$
Note that by (\ref{7.4-0}),
$$
H(\theta r)
\le \frac 12 H(r) +C_0 \left(\frac{\delta}{r} \right)^\sigma
\Big\{ H(2r) + h(2r) \Big\}
$$
for $r\in [\delta, 1]$, where $\delta =(1+\sqrt{\lambda}) \e$.
This gives (\ref{7.4-1}) with $\eta (t)=t^\sigma$, which satisfies (\ref{7.4-2}).

It is easy to see that $H(r)$ satisfies the first inequality in (\ref{7.5-0}).
To verify the second, we note that for $r\le t, s \le 2r$,
$$
\aligned
|h(t)-h(s)|
&\le |E_t -E_s|\\
& \le \frac{C}{r}
\inf_{\beta \in \mathbb{R}}
\left(\fint_{T_r} |(E_t-E_s)\cdot x -\beta|^2 \right)^{1/2}\\
&
\le \frac{C}{r}
\inf_{\beta \in \mathbb{R}}
\left(\fint_{T_r} |u_{\e, \lambda} - E_t\cdot x -\beta|^2 \right)^{1/2}
+ \frac{C}{r}
\inf_{\beta \in \mathbb{R}}
\left(\fint_{T_r} |u_{\e, \lambda} - E_s\cdot x -\beta|^2 \right)^{1/2}\\
&\le C \big\{ H(t) +H(s) \big\}\\
&\le C H(2r),
\endaligned
$$
where $C$ depends only on $d$, $\alpha$ and $M$.
Thus, by Lemma \ref{lemma-7.5}, we obtain 
$$
\aligned
 \frac{1}{r}
\inf_{\beta \in \mathbb{R}}
\left(\fint_{T_r} |u_{\e, \lambda}  -\beta|^2 \right)^{1/2}
& \le H(r) + h(r)\\
&\le C \big\{ H(1) + h(1) \big\}\\
&\le C  \left\{ \left(\fint_{T_1} |u_{\e, \lambda} |^2 \right)^{1/2}
+ \| f\|_{C^{1+\alpha} (I_1)}
+ \left(\fint_{T_1} |F|^p \right)^{1/p} \right\}.
\endaligned
$$
By Cacciopoli's inequality for parabolic systems (see e.g. \cite[Appendix]{Armstrong-2018}),
$$
\left(\fint_{T_{r/2}} |\nabla u_{\e, \lambda}|^2 \right)^{1/2}
 \le C  \left\{ \left(\fint_{T_1} |u_{\e, \lambda} |^2 \right)^{1/2}
+ \| f\|_{C^{1+\alpha} (I_1)}
+ \left(\fint_{T_1} |F|^p \right)^{1/p} \right\}.
$$
Since $(\partial_t +\mathcal{L}_{\e, \lambda}) (\beta)=0$ for any 
$\beta \in \mathbb{R}$,
we may replace $u_{\e, \lambda}$ in the right-hand side of the
inequality above by $u_{\e, \lambda}-\beta$.
This, together with Poincar\'e-type inequality for parabolic systems, yields (\ref{7.5-1}).
\end{proof}

\begin{proof}[\bf Proof of Theorem \ref{m-theorem-3}]
Since $\mathcal{L}_\e=\mathcal{L}_{\e, \lambda}$ for $\lambda =\e^{k-2}$,
Theorem \ref{m-theorem-3} follows readily from Theorem \ref{thm-7.1}.
\end{proof}


\section{Convergence rates}\label{section-6}

In this section we investigate the problem of convergence rates for the initial-Dirichlet problem,
\begin{equation}\label{DP}
\left\{
\aligned
(\partial_t +\mathcal{L}_{\e, \lambda} ) u_{\e, \lambda}   & =F & \quad & \text{ in } \Omega_T,\\ 
u_{\e, \lambda}  & =f & \quad & \text{ on } \partial_p \Omega_T,
\endaligned
\right.
\end{equation}
where $\Omega$ is a bounded domain in $\mathbb{R}^d$ and $\Omega_T=\Omega\times  (0, T)$.
As a consequence, we  obtain rates of convergence  for the operator $\partial_t +\mathcal{L}_\e$ in (\ref{operator-0}).

Let $u_{0, \lambda}$ be the solution of the homogenized problem for (\ref{DP}),
\begin{equation}\label{DP-0}
\left\{
\aligned
(\partial_t +\mathcal{L}_{0, \lambda} ) u_{0, \lambda}  & =F & \quad & \text{ in } \Omega_T,\\ 
u_{0, \lambda} & =f & \quad & \text{ on } \partial_p \Omega_T.
\endaligned
\right.
\end{equation}
Let $w_{\e}$ be the two-scale expansion given by (\ref{w-3}).
As before,  the operator $K_\e$ is defined by $K_\e (f) =S_\delta (\eta_\delta f)$ with
$\delta =(1+\sqrt{\lambda} )\e$.
The cut-off function $\eta_\delta=\eta_\delta^1 (x)  \eta_\delta^2 (t) $ is chosen so that 
$0\le \eta_\delta\le 1$, $|\nabla \eta_\delta|\le C /\delta$,
$|\partial_t \eta_\delta | +|\nabla^2 \eta_\delta|\le C /\delta^2$, and
$$
\eta_\delta=1 \quad \text{ in } \Omega_T \setminus \Omega_{T, 3\delta} \quad 
\text{ and } \quad 
\eta_\delta=0 \quad \text{ in } \Omega_{T,  2\delta},
$$
where $\Omega_{T, \rho }$ denotes the (parabolic) boundary layer
\begin{equation}\label{layer}
\Omega_{T, \rho} =
\Big( \big\{ x\in \Omega: \, \text{\rm dist} (x, \partial\Omega)<\rho \big\}
\times (0, T)\Big) 
\cup \Big( \Omega \times (0, \rho^2)\Big)
\end{equation}
for $0< \rho\le c$.

\begin{lemma}\label{lemma-6.0}
Let $\Omega$ be a a bounded Lipschitz domain in $\mathbb{R}^d$.
Let  $\Omega_{T, \rho }$ be defined by (\ref{layer}). Then
\begin{equation}\label{layer-1}
\| \nabla g \|_{L^2(\Omega_{T, \rho})}
\le C \sqrt{ \rho } \, 
\Big\{ \|\nabla g\|_{L^2(\Omega_T)}
+\|\nabla^2 g\|_{L^2(\Omega_T)}
+\| \partial_t g \|_{L^2(\Omega)}
\Big\},
\end{equation}
where $C$ depends only on $d$, $\Omega$ and $T$.
\end{lemma}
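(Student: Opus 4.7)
The plan is to split the layer $\Omega_{T,\rho}$ into two geometrically different pieces and estimate each separately:
\begin{equation*}
\Omega_{T,\rho} = \big(\Omega_\rho^{sp} \times (0,T)\big) \cup \big(\Omega \times (0,\rho^2)\big), \qquad \Omega_\rho^{sp} := \{x\in \Omega: \text{dist}(x,\partial\Omega)<\rho\}.
\end{equation*}
For each part I will use a different mechanism: a spatial Hardy/boundary-layer inequality for the side piece, and a parabolic (Lions--Magenes) trace embedding for the bottom piece.

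\textbf{Spatial side layer.} For a Lipschitz domain, the standard boundary-layer inequality asserts that for any $v\in H^1(\Omega)$,
\begin{equation*}
\int_{\Omega_\rho^{sp}} |v|^2\, dx \,\le\, C\rho \int_{\partial\Omega} |v|^2\, dS + C\rho^2 \int_\Omega |\nabla v|^2\, dx.
\end{equation*}
I will apply this with $v=\nabla g(\cdot,t)$ (componentwise), use the trace theorem and interpolation to dominate $\|\nabla g(\cdot,t)\|_{L^2(\partial\Omega)}^2$ by $C\|\nabla g(\cdot,t)\|_{L^2(\Omega)}\|\nabla g(\cdot,t)\|_{H^1(\Omega)}$, and then integrate in $t\in(0,T)$. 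This yields
\begin{equation*}
\|\nabla g\|_{L^2(\Omega_\rho^{sp}\times (0,T))}^2 \le C\rho \Big\{\|\nabla g\|_{L^2(\Omega_T)}^2 + \|\nabla^2 g\|_{L^2(\Omega_T)}^2\Big\}.
\end{equation*}

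\textbf{Temporal initial layer.} Here the key input is the parabolic trace embedding: since $g\in L^2(0,T;H^2(\Omega))$ and $\partial_t g\in L^2(0,T;L^2(\Omega))$, the function $\nabla g$ belongs to $L^2(0,T;H^1(\Omega))$ and its time derivative $\partial_t\nabla g=\nabla\partial_t g$ belongs to $L^2(0,T;H^{-1}(\Omega))$. By the classical interpolation result of Lions--Magenes (with interpolation couple $(H^1,H^{-1})_{1/2}=L^2$), this forces $\nabla g\in C([0,T];L^2(\Omega))$ together with the estimate
\begin{equation*}
\sup_{0\le t\le T}\|\nabla g(\cdot,t)\|_{L^2(\Omega)} \le C\Big\{\|\nabla g\|_{L^2(\Omega_T)} + \|\nabla^2 g\|_{L^2(\Omega_T)} + \|\partial_t g\|_{L^2(\Omega_T)}\Big\}.
\end{equation*}
Then trivially $\|\nabla g\|_{L^2(\Omega\times(0,\rho^2))}^2 \le \rho^2 \sup_t\|\nabla g(\cdot,t)\|_{L^2}^2$, and since $\rho\le c$ we have $\rho^2\le c\rho$, which produces the $\sqrt{\rho}$ factor after taking square roots.

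\textbf{Combining and main obstacle.} Adding the two bounds and extracting the square root gives (\ref{layer-1}). The only nontrivial step is the temporal one: a naive approach via the fundamental theorem of calculus in $t$ requires pointwise control of $\partial_t\nabla g$, which is not available under our hypotheses (we only have $\partial_t g\in L^2$, not $\partial_t g\in L^2(0,T;H^1)$). Invoking the Lions--Magenes continuity embedding circumvents this exactly by trading one spatial derivative against the dual norm in time. Everything else is standard for Lipschitz domains, so the proof is essentially the assembly of these two ingredients.
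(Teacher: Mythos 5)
Your decomposition and your treatment of the spatial side layer coincide with the paper's: the paper also writes $\|\nabla g(\cdot,t)\|_{L^2(\Omega_\rho)}\le C\sqrt{\rho}\,\|\nabla g(\cdot,t)\|_{H^1(\Omega)}$ and integrates in $t$. Where you diverge is the initial (temporal) layer. The paper argues by hand: it chooses a cutoff $\theta\in C_0^\infty(\Omega)$ with $\theta=1$ on $\Omega\setminus\Omega_\rho$ and $|\nabla\theta|\le C/\rho$, a good time slice $t_0\in(T/2,T)$ with $\int_\Omega|\nabla g(x,t_0)|^2\,dx\le \frac{2}{T}\int_{\Omega_T}|\nabla g|^2$, and then writes $\int_\Omega|\nabla g(x,t)|^2\theta\,dx$ via the fundamental theorem of calculus in time, integrating by parts \emph{in $x$} to convert $\nabla\partial_s g$ into $\nabla^2 g$ and $\partial_s g$ terms; the compact support of $\theta$ kills all boundary terms, and only the bottom layer over $\Omega\setminus\Omega_\rho$ needs this (the corner $\Omega_\rho\times(0,\rho^2)$ is already covered by the side estimate). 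You instead invoke the Lions--Magenes continuity embedding to get $\sup_t\|\nabla g(\cdot,t)\|_{L^2(\Omega)}$. That route works, but two points deserve care. First, the couple you cite, $\nabla g\in L^2(0,T;H^1(\Omega))$ with $\partial_t\nabla g\in L^2(0,T;H^{-1}(\Omega))$, is not a legitimate Gelfand triple here: $H^{-1}(\Omega)=(H^1_0(\Omega))'$, while $\nabla g$ does not lie in $H^1_0(\Omega)$, and extending the functional $\phi\mapsto -\int_\Omega \partial_t g\,\partial_j\phi$ from $H^1_0$ to all of $H^1(\Omega)$ as the weak time derivative would require boundary information you do not have. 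The clean statement is the trace/intermediate-derivative theorem applied to $g$ itself with the couple $(H^2(\Omega),L^2(\Omega))_{1/2}=H^1(\Omega)$, giving $g\in C([0,T];H^1(\Omega))$; this is exactly what you need and is where the cutoff-free argument should be anchored. Second, that theorem controls the full $H^1$ norm by the full $L^2(0,T;H^2)$ norm, which includes $\|g\|_{L^2(\Omega_T)}$, absent from the right-hand side of \eqref{layer-1}; subtract the spatial mean $\fint_\Omega g(\cdot,t)$ (whose time derivative is still controlled by $\|\partial_t g\|_{L^2}$) to remove it. With these repairs your argument is correct; the paper's version is more elementary and self-contained, while yours buys a stronger conclusion (continuity of $\nabla g$ in time up to $t=0$ and a genuine $\rho^2$, rather than $\rho$, for the bottom layer) at the cost of quoting interpolation theory.
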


\begin{proof}
Let $\Omega_\rho =\big\{ x\in \Omega:  \text{dist} (x, \partial\Omega)< \rho  \big\}$.
Then
$$
\| \nabla g(\cdot, t)  \|_{L^2(\Omega_\rho)}
\le C \sqrt{\rho} \, \| \nabla g(\cdot, t) \|_{H^1(\Omega)}.
$$
It follows that
$$
\|\nabla g\|_{L^2(\Omega_\rho \times (0, T))}
\le C \sqrt{\rho} \, \Big\{ \|\nabla g\|_{L^2 (\Omega_T)}
+\|\nabla^2 g\|_{L^2(\Omega_T)}\Big\}.
$$
To estimate $\| \nabla g\|_{L^2( (\Omega\setminus \Omega_\rho) \times (0, \rho^2))}$,
we choose a cut-off function $\theta\in C_0^\infty(\Omega)$ such that
$0\le \theta\le 1$, $\theta=1$ on $\Omega\setminus \Omega_\rho$, and $|\nabla \theta|\le C/\rho$.
By Fubini's Theorem we may also choose $t_0\in (T/2, T)$ such that
$$
\int_\Omega |\nabla g (x, t_0)|^2\, dx
\le \frac{2}{T} \int_{\Omega_T} |\nabla g|^2 \, dx dt.
$$
Note that for any $t\in (0, \rho^2)$,
$$
\aligned
\int_\Omega |\nabla g(x, t)|^2 \theta (x)\, dx
& \le \int_\Omega |\nabla g(x, t_0)|^2 \theta (x)\, dx
+ \Big|
\int_t^{t_0} \!\!\! \int_\Omega \partial_s (|\nabla g (x, s)|^2 \theta (x) ) \, dx ds \Big|\\
&\le \frac{2}{T} \int_{\Omega_T} |\nabla g|^2 
+ \int_{\Omega_T}  |\nabla^2 g| |\partial_t g|
+ 2 \int_{\Omega_T}  |\nabla g| |\partial_t g| |\nabla \theta|, 
\endaligned
$$
where we have used an  integration by parts in $x$  for the last step.
By integrating the inequality above in the variable  $t$ over the interval $(0, \rho^2)$, we obtain 
$$
\int_0^{\rho^2} \!\!\!
\int_\Omega
|\nabla g |^2 \theta\, dx dt
\le C \rho
\int_{\Omega_T} 
\Big\{
|\nabla g |^2 +  |\nabla^2 g|^2 + |\partial_t g|^2 \Big\},
$$
where we also used the Cauchy inequality.
This completes the proof.
\end{proof}

\begin{lemma}\label{lemma-6.1}
Let $\Omega$ be a bounded Lipschitz domain in $\mathbb{R}^d$ and $0<T<\infty$.
Let $u_{\e, \lambda} $ be a weak solution of (\ref{DP}) and
$u_{0, \lambda} $ the homogenized problem (\ref{DP-0}).
Let $w_\e$ be defined by (\ref{w-3}).
Then, for any $\psi\in L^2(0, T; H_0^1(\Omega))$,
\begin{equation}\label{6.1}
\aligned
& \Big|\int_0^T \langle  \partial_t w_\e, \psi \rangle_{H^{-1}(\Omega) \times H^1_0(\Omega)}
+\int_{\Omega_T} A_\lambda (x/\e, t/\e^2)\nabla w_\e \cdot \nabla \psi \Big|\\
& \le  C \Big\{ \| u_{0, \lambda} \|_{L^2(0, T; H^2(\Omega))}
+ \|\partial_t u_{0, \lambda}\|_{L^2(\Omega_T)} \Big\}
\Big\{ \delta  \|\nabla \psi \|_{L^2(\Omega_T)}
+ \delta^{1/2}
\| \nabla \psi\|_{L^2(\Omega_{T, 3\delta})}
\Big\},
\endaligned
\end{equation}
where $\delta=(1+\sqrt{\lambda})\e$ and $C$ depends only on $d$,  $\mu$, $\Omega$ and $T$.
\end{lemma}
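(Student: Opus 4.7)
The plan is to apply Lemma \ref{lemma-3.3} to express $(\partial_t + \mathcal{L}_{\e,\lambda})w_\e$ as a sum of six divergence-form terms. After testing this identity against $\psi \in L^2(0,T; H^1_0(\Omega))$ and integrating by parts in $x$, the left-hand side of \eqref{6.1} reduces to $-\sum_{k=1}^{6} \int_{\Omega_T} H_k \cdot \nabla \psi$, where the $H_k$ are the vector fields inside the divergences in \eqref{w-3.1}. Setting $\delta = (1+\sqrt{\lambda})\e$, the task is to bound each of these six integrals by the right-hand side of \eqref{6.1}.

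For the first term, whose integrand is $(\widehat{A_\lambda} - A_\lambda(x/\e,t/\e^2))(\nabla u_{0,\lambda} - K_\e(\nabla u_{0,\lambda})) \cdot \nabla\psi$, I would split
\[
\nabla u_{0,\lambda} - K_\e(\nabla u_{0,\lambda}) = (1-\eta_\delta)\nabla u_{0,\lambda} + \bigl[\eta_\delta \nabla u_{0,\lambda} - S_\delta(\eta_\delta \nabla u_{0,\lambda})\bigr].
\]
The first summand is supported in $\Omega_{T,3\delta}$, so Cauchy--Schwarz combined with Lemma \ref{lemma-6.0} yields a contribution of the form $\delta^{1/2}\{\cdots\}\|\nabla\psi\|_{L^2(\Omega_{T,3\delta})}$. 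The second summand is controlled by Lemma \ref{lemma-3.2}; its principal parts contribute $\delta\{\cdots\}\|\nabla\psi\|_{L^2(\Omega_T)}$, while the correction terms involving $\nabla\eta_\delta$ or $\partial_t \eta_\delta$ are supported in $\Omega_{T,3\delta}$ and absorbed into the boundary-layer factor via Lemma \ref{lemma-6.0}.

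For each of the remaining five terms, the vector $H_k$ is the product of a $(1,\lambda)$-periodic factor evaluated at $(x/\e, t/\e^2)$ --- one of $\chi^\lambda$, $\phi^\lambda_{kij}$, $\nabla \phi^\lambda_{\ell (d+1) k}$, or $\phi^\lambda_{k(d+1)j}$ --- times either $\nabla K_\e(\nabla u_{0,\lambda})$, $\nabla^2 K_\e(\nabla u_{0,\lambda})$, or $\partial_t K_\e(\nabla u_{0,\lambda})$. The idea is to push derivatives inside $S_\delta$ via
\[
\nabla K_\e(\nabla u_{0,\lambda}) = S_\delta(\eta_\delta \nabla^2 u_{0,\lambda}) + S_\delta\big((\nabla \eta_\delta)\nabla u_{0,\lambda}\big),
\]
and the analogous identity \eqref{3.47} for the $\partial_t K_\e$ factor in Term 4. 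Lemma \ref{lemma-S} combined with the uniform bounds \eqref{2.3}, \eqref{3.12}, \eqref{3.12-1} then yields, for each such term, an ``interior'' piece (from $\eta_\delta \nabla^2 u_{0,\lambda}$ or $\eta_\delta \partial_t u_{0,\lambda}$) bounded by $\delta\{\cdots\}\|\nabla\psi\|_{L^2(\Omega_T)}$, and a ``boundary'' piece (from $\nabla \eta_\delta$ or $\partial_t \eta_\delta$) whose $S_\delta$-image is supported in an $O(\delta)$-neighborhood of $\partial_p \Omega_T$, contributing $\delta^{1/2}\{\cdots\}\|\nabla\psi\|_{L^2(\Omega_{T,3\delta})}$ by Lemma \ref{lemma-6.0}.

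The main obstacle is the bookkeeping that eliminates the scale parameter $\lambda$ from the final bound. The only $\lambda$-dependent estimate is the dual-corrector bound \eqref{3.12-1}, $\|\phi^\lambda_{k(d+1)j}\|_{L^2} \le C(1+\lambda)$, but it enters only in Terms 4 and 6 of Lemma \ref{lemma-3.3}, which carry an $\e^2$ prefactor. Since $\e^2(1+\lambda) \le \delta^2$, the $\delta^{-1}$ loss coming from each spatial or temporal derivative of $S_\delta$ is absorbed by this prefactor, and the $\lambda$-dependence disappears. Summing the six contributions then gives \eqref{6.1}.
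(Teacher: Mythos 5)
Your proposal follows essentially the same route as the paper's proof: decompose the left-hand side via Lemma \ref{lemma-3.3} into six integrals, then bound each as in the proof of Lemma \ref{lemma-3.4} (and of Lemma 3.5 of \cite{Geng-Shen-2017}), splitting every term into an interior piece of order $\delta$ and a boundary-layer piece controlled through Lemma \ref{lemma-6.0}. You also correctly isolate the one point the paper explicitly flags, namely that the $(1+\lambda)$ growth in (\ref{3.12-1}) enters only the terms carrying an $\e^2$ prefactor and is absorbed because $\e^2(1+\lambda)\le \delta^2$.
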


\begin{proof}
In view of Lemma \ref{lemma-6.0}, 
the case $\lambda=1$ follows from  \cite[Lemma 3.5]{Geng-Shen-2017}.
The case $\lambda\neq 1$ is proved in a similar manner.
Indeed, by (\ref{w-3.1}), the right-hand side of (\ref{6.1}) is bounded by 
$$
\aligned
  & C\int_{\Omega_T} 
|\nabla u_{0, \lambda} -K_\e (\nabla u_{0, \lambda})| |\nabla \psi|
+C \e \int_{\Omega_T} |(\chi^\lambda)^\e | |\nabla K_\e (\nabla u_{0, \lambda})| |\nabla \psi|\\
&\quad  + C\e \int_{\Omega_T} \sum_{k, i, j} | (\phi_{kij}^\lambda)^\e| |\nabla K_\e (\nabla  u_{0, \lambda})| |\nabla \psi|\\
&\quad  + C \e^2 \int_{\Omega_T}
\sum_{k, j} |(\phi_{k (d+1) j}^\lambda)^\e | |\partial_t K_\e (\nabla u_{0, \lambda})| |\nabla \psi|\\
& \quad + C \e \int_{\Omega_T}
\sum_{k, j} |(\nabla \phi_{ k (d+1) j}^\lambda)^\e | |\nabla K_\e (\nabla u_{0, \lambda}| |\nabla \psi|\\
& \quad + C \e^2 \int_{\Omega_T}
\sum_{k, j} 
|(\phi_{k (d+1) j} ^\lambda)^\e | |\nabla^2 K_\e (\nabla u_{0, \lambda})| |\nabla \psi|\\
&=I_1 +I_2+I_3+I_4+I_5+I_6.
\endaligned
$$
The estimates of $I_j$ for $j=1, \dots, 6$ are exactly the same as in the proof of Lemma 3.5 in \cite{Geng-Shen-2017}.
Also see the proof of Lemma  \ref{lemma-3.4} in Section \ref{section-3}.
We  point out that in the cases of $I_4$ and $I_6$,  the estimate
$$
\sup_{(x, t)\in \mathbb{R}^{d+1} }
\left(\fint_{Q_\delta (x, t)} 
| (\phi^\lambda_{k (d+1) j})^\e |^2\right)^{1/2}
\le C (1+\lambda)
$$
is used.
We omit the details.
\end{proof}

The next theorem gives an error estimate for the  two-scale expansion 
\begin{equation}\label{w-6}
\widetilde{w}_\e
(x, t)=u_{\e, \lambda}  -u_{0, \lambda} -\e \chi^\lambda (x/\e, t/\e^2) K_\e (\nabla u_{0, \lambda} )
\end{equation}
 in $L^2(0, T; H^1(\Omega))$.

\begin{thm}\label{theorem-H-1}
Let $\widetilde{w}_\e$ be defined by (\ref{w-6}).
Under the same conditions as in Lemma \ref{lemma-6.1},
we have
\begin{equation}\label{6.1-0}
\|\nabla \widetilde {w}_\e\|_{L^2(\Omega_T)}
\le C \sqrt{\delta}  \, 
\Big\{
\| u_{0, \lambda} \|_{L^2(0, T; H^2(\Omega))}
+ \|\partial_t u_{0, \lambda} \|_{L^2(\Omega_T)}
\Big\},
\end{equation}
where  $\delta =(1+\sqrt{\lambda})\e\le 1$ and
$C$ depends only on $d$, $\mu$, $\Omega$ and $T$.
\end{thm}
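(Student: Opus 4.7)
The plan is to perform an energy estimate on the \emph{full} two-scale expansion $w_\e$ from (\ref{w-3}) (the one that also carries the dual-corrector term), use Lemma \ref{lemma-6.1} to bound the weak-form error, and then transfer the $H^1$ estimate from $w_\e$ to $\widetilde{w}_\e$ by controlling the difference
\begin{equation*}
H_\e := \e^2\bigl(\phi^\lambda_{i(d+1)j}\bigr)^\e \frac{\partial}{\partial x_i} K_\e\!\left(\frac{\partial u_{0,\lambda}}{\partial x_j}\right).
\end{equation*}
Throughout, let $M := \|u_{0,\lambda}\|_{L^2(0,T;H^2(\Omega))} + \|\partial_t u_{0,\lambda}\|_{L^2(\Omega_T)}$.

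First I would verify that $w_\e \in L^2(0,T;H^1_0(\Omega))$ with $w_\e(\cdot,0)=0$. The cut-off $\eta_\delta$ vanishes on $\Omega_{T,2\delta}$ while the kernel of $S_\delta$ is supported in $B(0,\delta)\times(-\delta^2,0)$, so $K_\e(\nabla u_{0,\lambda}) \equiv 0$ on $\Omega_{T,\delta}$ and in particular on $\partial_p\Omega_T$. Thus $w_\e$ coincides with $u_{\e,\lambda}-u_{0,\lambda}$ near the parabolic boundary, and this difference vanishes because $u_{\e,\lambda}$ and $u_{0,\lambda}$ share the same initial and Dirichlet data.

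Next, I would test the identity in Lemma \ref{lemma-6.1} against $\psi = w_\e$. Since $w_\e(\cdot,0) = 0$, the time-derivative pairing contributes $\tfrac{1}{2}\|w_\e(T)\|_{L^2(\Omega)}^2 \ge 0$, and ellipticity supplies $\mu\|\nabla w_\e\|_{L^2(\Omega_T)}^2$ from the spatial form. Lemma \ref{lemma-6.1} then gives
\begin{equation*}
\mu\|\nabla w_\e\|_{L^2(\Omega_T)}^2
\le CM\Bigl\{\delta\|\nabla w_\e\|_{L^2(\Omega_T)} + \sqrt{\delta}\,\|\nabla w_\e\|_{L^2(\Omega_{T,3\delta})}\Bigr\}
\le CM\sqrt{\delta}\,\|\nabla w_\e\|_{L^2(\Omega_T)},
\end{equation*}
where the last step uses $\delta\le 1$ and $\Omega_{T,3\delta}\subset\Omega_T$. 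Dividing through yields the clean bound $\|\nabla w_\e\|_{L^2(\Omega_T)} \le CM\sqrt{\delta}$.

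Finally, since $\widetilde{w}_\e = w_\e - H_\e$, it suffices to show $\|\nabla H_\e\|_{L^2(\Omega_T)} \le CM\sqrt{\delta}$ and conclude via the triangle inequality. Setting $h_1 := \partial_{x_i}(\eta_\delta\,\partial_{x_j} u_{0,\lambda})$, the identity $\partial_{x_i}K_\e(\partial_{x_j} u_{0,\lambda}) = S_\delta(h_1)$ shows that $\nabla H_\e$ splits into summands of the form $\e(\nabla\phi^\lambda_{k(d+1)j})^\e\, S_\delta(h_1)$ and $\e^2(\phi^\lambda_{k(d+1)j})^\e \nabla S_\delta(h_1)$. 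The $(1,\lambda)$-periodicity of the dual correctors together with (\ref{3.12}), (\ref{3.12-1}) and the comparison $\e^2(1+\lambda) \le \delta^2$ produce the averaged bounds
\begin{equation*}
\sup_{(x,t)}\biggl(\fint_{Q_\delta(x,t)} |\e(\nabla\phi^\lambda_{k(d+1)j})^\e|^2\biggr)^{1/2} \le C\e,
\quad
\sup_{(x,t)}\biggl(\fint_{Q_\delta(x,t)} |\e^2(\phi^\lambda_{k(d+1)j})^\e|^2\biggr)^{1/2} \le C\delta^2.
\end{equation*}
Applying (\ref{S-1}) to the first summand and (\ref{S-2}) to the second, and using Lemma \ref{lemma-6.0} together with $|\nabla\eta_\delta|\le C/\delta$ to get $\|h_1\|_{L^2(\Omega_T)} \le C\delta^{-1/2}M$, both contributions to $\|\nabla H_\e\|_{L^2(\Omega_T)}$ are bounded by $CM\sqrt{\delta}$. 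The main technical point is this last step: because $\phi^\lambda_{k(d+1)j}$ grows linearly in $\lambda$ in its $L^2$-average, the factor $\delta^{-1}$ produced by $\nabla S_\delta$ in (\ref{S-2}) can be absorbed only by exploiting the sharp comparison $\e^2(1+\lambda)\le \delta^2$ rather than the naive $\e^2$; without this gain the estimate would degenerate as $\lambda\to\infty$.
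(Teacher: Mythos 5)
Your proposal is correct and follows essentially the same route as the paper: test the identity of Lemma \ref{lemma-6.1} with $\psi=w_\e$, use $w_\e=0$ on $\partial_p\Omega_T$ to drop the time-derivative pairing and ellipticity to extract $\mu\|\nabla w_\e\|^2_{L^2(\Omega_T)}$, then bound $\|\nabla(w_\e-\widetilde w_\e)\|_{L^2(\Omega_T)}$ separately. The only part the paper leaves implicit ("it is not hard to show") is your estimate of $\nabla H_\e$, and your treatment of it --- in particular absorbing the $(1+\lambda)$ growth of $\phi^\lambda_{k(d+1)j}$ via $\e^2(1+\lambda)\le\delta^2$ together with (\ref{S-2}) and $\|h_1\|_{L^2(\Omega_T)}\le C\delta^{-1/2}M$ from Lemma \ref{lemma-6.0} --- is exactly the intended argument and is carried out correctly.
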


\begin{proof}
Let $\psi=w_\e $ in (\ref{6.1}), where $w_\e$ is given by (\ref{w-3}).
Since  $w_\e =0$ on $\partial_p \Omega_T$, 
we see that $\int_0^T \langle \partial_t w_\e, w_\e\rangle \ge 0$.
It follows that $\|\nabla w_\e\|_{L^2(\Omega_T)}$ is bounded by the right-hand side of (\ref{6.1-0}).
It is not hard to show that
$\| \nabla (w_\e -\widetilde{w}_\e)\|_{L^2(\Omega_T)}$ is also bounded by the right-hand side of (\ref{6.1-0}).
This gives  the inequality (\ref{6.1-0}).
\end{proof}

We now move on to the convergence rate of $u_{\e, \lambda} -u_{0, \lambda}$  in $L^2 (\Omega_T)$.

\begin{thm}\label{theorem-L-2}
Suppose $A$ satisfies  (\ref{ellipticity}) and (\ref{periodicity}).
Let $\Omega$ be a bounded $C^{1, 1}$ domain in $\mathbb{R}^d$.
Let $u_{\e, \lambda}$ be a weak solution of (\ref{DP}) and $u_{0, \lambda}$ the solution of the 
homogenized problem (\ref{DP-0}).
Then
\begin{equation}\label{6.3-0}
\| u_{\e, \lambda}  -u_{0, \lambda}\|_{L^2(\Omega_T)}
\le C \delta 
\Big\{
\| u_{0, \lambda} \|_{L^2(0, T; H^2(\Omega))}
+ \|\partial_t u_{0, \lambda}\|_{L^2(\Omega_T)}
 \Big\},
\end{equation}
where  $\delta =(1+\sqrt{\lambda} ) \e$ and 
$C$ depends only on $d$,   $\mu$, $\Omega$ and $T$.
\end{thm}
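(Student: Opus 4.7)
The plan is to argue by duality, combining the $H^1$-type two-scale expansion estimate of Theorem \ref{theorem-H-1} with the $C^{1,1}$ regularity of $\Omega$ in order to upgrade the $\sqrt{\delta}$ gradient bound into a $\delta$ bound in $L^2$. Fix $G \in L^2(\Omega_T)$ and let $v_{0,\lambda}$ solve the backward adjoint homogenized problem
\[
-\partial_t v_{0,\lambda} - \text{\rm div}\big(\widehat{A_\lambda}^T \nabla v_{0,\lambda}\big) = G \ \text{ in } \Omega_T, \quad v_{0,\lambda} = 0 \text{ on } \partial\Omega \times (0, T), \quad v_{0,\lambda}(\cdot, T) = 0.
\]
Since $\widehat{A_\lambda}$ has constant entries with ellipticity uniform in $\lambda$ by Lemma \ref{lemma-2.1}, and $\Omega \in C^{1,1}$, the classical $H^2$ regularity theory for constant-coefficient parabolic equations in $C^{1,1}$ cylinders yields
\[
\|v_{0,\lambda}\|_{L^2(0,T;H^2(\Omega))} + \|\partial_t v_{0,\lambda}\|_{L^2(\Omega_T)} \le C \|G\|_{L^2(\Omega_T)},
\]
with $C$ independent of $\lambda$.

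Next I would derive a duality identity. Testing the weak formulations of (\ref{DP}) and (\ref{DP-0}) against $v_{0,\lambda} \in L^2(0,T;H^1_0(\Omega))$, subtracting, integrating by parts in $t$ (using $(u_{\e,\lambda} - u_{0,\lambda})(\cdot,0) = 0$ and $v_{0,\lambda}(\cdot, T) = 0$), and appealing to the adjoint PDE gives
\[
\int_{\Omega_T} (u_{\e,\lambda} - u_{0,\lambda})\, G\, dx dt = \int_{\Omega_T} \big(\widehat{A_\lambda} - A_\lambda(x/\e, t/\e^2)\big)\, \nabla u_{\e,\lambda} \cdot \nabla v_{0,\lambda}\, dx dt.
\]
Substituting the two-scale expansion (\ref{w-3}) for $u_{\e,\lambda}$ on the right and rearranging via the PDE for $w_\e$ supplied by Lemma \ref{lemma-3.3}, the right-hand side will be recognized as the bilinear expression appearing on the left of (\ref{6.1}) tested with $\psi = v_{0,\lambda}$, plus remainders arising from the corrector pieces $\e (\chi^\lambda)^\e K_\e(\nabla u_{0,\lambda})$ and $\e^2 (\phi^\lambda_{i(d+1)j})^\e \partial_i K_\e(\partial_j u_{0,\lambda})$.

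Invoking Lemma \ref{lemma-6.1} with $\psi = v_{0,\lambda}$ then bounds the principal piece by
\[
C \big\{ \|u_{0,\lambda}\|_{L^2(0,T;H^2(\Omega))} + \|\partial_t u_{0,\lambda}\|_{L^2(\Omega_T)} \big\} \big\{ \delta \|\nabla v_{0,\lambda}\|_{L^2(\Omega_T)} + \sqrt{\delta}\, \|\nabla v_{0,\lambda}\|_{L^2(\Omega_{T,3\delta})} \big\}.
\]
Applying Lemma \ref{lemma-6.0} to $v_{0,\lambda}$ provides the crucial boundary-layer smallness $\|\nabla v_{0,\lambda}\|_{L^2(\Omega_{T,3\delta})} \le C \sqrt{\delta}\, \|G\|_{L^2}$, so both the interior contribution $\delta \|\nabla v_{0,\lambda}\|_{L^2}$ and the boundary-layer contribution $\sqrt{\delta}\cdot \sqrt{\delta}\|G\|_{L^2}$ are of order $\delta \|G\|_{L^2}$. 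Taking the supremum over unit-norm $G \in L^2(\Omega_T)$ then yields (\ref{6.3-0}). The main obstacle I anticipate is the careful control of the corrector remainders generated when reducing the duality identity to the form of Lemma \ref{lemma-6.1}: these involve products of first- and second-order correctors against $\nabla v_{0,\lambda}$ or $\partial_t v_{0,\lambda}$, and must be handled uniformly in $\lambda$ using the periodic estimates (\ref{Q-e-1}), (\ref{3.12}), (\ref{3.12-1}) and Lemma \ref{lemma-S}, in the same spirit as the proof of Lemma \ref{lemma-6.1} itself. The essential arithmetic is that one factor of $\sqrt{\delta}$ from the forward two-scale expansion combines with a second factor of $\sqrt{\delta}$ gained from the $C^{1,1}$-regularity-driven boundary-layer smallness of $\nabla v_{0,\lambda}$ to produce the sharp $\delta$ convergence rate.
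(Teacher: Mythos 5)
Your overall architecture --- duality against a backward adjoint problem, Lemma \ref{lemma-6.1}, and the boundary-layer smallness from Lemma \ref{lemma-6.0} combining two factors of $\sqrt{\delta}$ into $\delta$ --- is exactly the strategy the paper relies on (it defers to the case $\lambda=1$ in \cite[Theorem 1.1]{Geng-Shen-2017}). However, there is a genuine gap in your choice of dual problem: you take $v_{0,\lambda}$ solving the adjoint of the \emph{homogenized} operator, and this breaks the reduction to Lemma \ref{lemma-6.1}. Your identity
\[
\int_{\Omega_T}(u_{\e,\lambda}-u_{0,\lambda})\,G
=\int_{\Omega_T}\big(\widehat{A_\lambda}-A_\lambda(x/\e,t/\e^2)\big)\nabla u_{\e,\lambda}\cdot\nabla v_{0,\lambda}
\]
is correct, but when you compare its right-hand side with the bilinear form on the left of (\ref{6.1}) tested with $\psi=v_{0,\lambda}$, the discrepancy is \emph{not} only ``remainders arising from the corrector pieces.''  A direct computation (integrate $\int_0^T\langle \partial_t(u_{\e,\lambda}-u_{0,\lambda}), v_{0,\lambda}\rangle$ by parts and use the adjoint equation) shows that the two expressions differ by
\[
\int_{\Omega_T}\big(\widehat{A_\lambda}-A_\lambda(x/\e,t/\e^2)\big)\nabla (u_{\e,\lambda}-u_{0,\lambda})\cdot\nabla v_{0,\lambda},
\]
in addition to the corrector terms. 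Since $\nabla(u_{\e,\lambda}-u_{0,\lambda})$ is only $O(1)$ in $L^2(\Omega_T)$ (it converges weakly, not strongly), and even after subtracting the corrector gradient the error $\nabla w_\e$ is only $O(\sqrt{\delta})$ by Theorem \ref{theorem-H-1}, Cauchy--Schwarz gives at best $O(\sqrt{\delta})$ for this leftover, and it has no zero-mean or flux-corrector structure that would let you integrate by parts to gain more. As written, your argument therefore proves only the $\sqrt{\delta}$ rate, not (\ref{6.3-0}).

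The repair --- which is what the proof in \cite{Geng-Shen-2017} actually does --- is to let $v_{\e,\lambda}$ solve the backward adjoint problem for the \emph{oscillating} operator, $-\partial_t v_{\e,\lambda}-\text{\rm div}\big(A_\lambda(x/\e,t/\e^2)^T\nabla v_{\e,\lambda}\big)=G$ with zero final and lateral data. Then $\int_{\Omega_T}w_\e\,G$ equals exactly the bilinear form on the left of (\ref{6.1}) with $\psi=v_{\e,\lambda}$, with no uncontrolled leftover, and Lemma \ref{lemma-6.1} applies verbatim. The price is that the boundary-layer factor becomes $\|\nabla v_{\e,\lambda}\|_{L^2(\Omega_{T,3\delta})}$; one recovers the needed $C\sqrt{\delta}\,\|G\|_{L^2(\Omega_T)}$ bound by expanding $v_{\e,\lambda}$ around the homogenized adjoint solution $v_{0,\lambda}$ with the adjoint correctors, controlling the expansion error globally by $C\sqrt{\delta}\,\|G\|_{L^2(\Omega_T)}$ via the adjoint version of Theorem \ref{theorem-H-1}, and controlling $\nabla v_{0,\lambda}$ and its corrector modulation on $\Omega_{T,3\delta}$ via Lemma \ref{lemma-6.0} together with the $H^2$ estimate for $v_{0,\lambda}$ --- this is precisely where your (correct) use of the $C^{1,1}$ regularity of $\Omega$ enters. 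One then passes from $w_\e$ to $u_{\e,\lambda}-u_{0,\lambda}$ by checking that the corrector terms in (\ref{w-3}) are themselves $O(\delta)$ in $L^2(\Omega_T)$, using (\ref{S-1}), (\ref{S-2}) and (\ref{3.12-1}).
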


\begin{proof}
In view of Lemma \ref{lemma-6.0},
this theorem was proved in \cite[Theorem 1.1]{Geng-Shen-2017} for the case $\lambda=1$.
With Lemma \ref{lemma-6.1} at our disposal,  the case $\lambda\neq 1$ follows
by a similar duality argument.
We omit  the details.
\end{proof}

Finally, we study the problem of convergence rates for  the parabolic  operator 
$\partial_t +\mathcal{L}_\e$, where 
$\mathcal{L}_\e =-\text{\rm div} \big (A(x/\e, t/\e^k)\nabla \big)$ and
$0<k<\infty$.
Note that the case $k=2$ is already treated  in Theorems \ref{theorem-H-1} and \ref{theorem-L-2} with $\lambda=1$.

For the case $k\neq 2$, we use the fact that $\mathcal{L}_\e =\mathcal{L}_{\e, \lambda}$ with $\lambda=\e^{k-2}$.
Recall that the homogenized operator for $\partial_t +\mathcal{L}_\e$ is given 
by $\partial_t -\text{\rm div} \big(\widehat{A_\infty}\nabla \big)$ for $0<k<2$, and
by $\partial_t -\text{\rm div} \big(\widehat{A_0}\nabla \big)$ for $2<k<\infty$,
where $\widehat{A_\infty}$ and $\widehat{A_0}$ are defined in (\ref{2.11} ) and (\ref{2.16}), respectively.

\begin{thm}
\label{L-2-2}
Assume $A$ satisfies  (\ref{ellipticity}) and (\ref{periodicity}).
Also assume that $\| \partial_s A \|_\infty \le M$.
Let $0<k<2$.
Let $u_\e$ be the  weak solution of the initial-Dirichlet problem,
\begin{equation}\label{DP-e}
\partial_t  u_\e -\text{\rm div} \big( A(x/\e, t/\e^k)\nabla u_\e \big) =F 
\quad \text{ in } \Omega_T \quad \text{ and } \quad u_\e =f \quad \text{ on } \partial_p \Omega_T,
\end{equation}
where $\Omega$ is a bounded $C^{1, 1}$ domain in $\mathbb{R}^d$ and
$0<T<\infty$.
Let $u_0$ be the solution of the homogenized problem.
Then
\begin{equation}\label{L-k-0}
\| u_\e -u_0\|_{L^2(\Omega_T)}
\le C (\e^{k/2} +\e^{2-k})
\Big\{ \| u_0\|_{L^2(0, T; H^2(\Omega))}
+\|\partial_t u_0\|_{L^2(\Omega_T)} \Big\}
\end{equation}
for $0<\e\le 1$,
where $C$ depends only on $d$, $\mu$,  $\Omega$, $T$, and $M$.

\begin{proof}
Let $\lambda=\e^{k-2}$ and $u_{0, \lambda}$ be the solution of the initial-Dirichlet problem,
\begin{equation}\label{DP-lambda}
\partial_t  u_{0, \lambda}  -\text{\rm div} \big( \widehat{A_\lambda}\nabla u_{0, \lambda} \big) =F 
\quad \text{ in } \Omega_T \quad \text{ and } \quad u_{0, \lambda} =f \quad \text{ on } \partial_p \Omega_T.
\end{equation}
Note that $(1+\sqrt{\lambda}) \e =   \e +\e^{k/2} \le 2 \e^{k/2}$ for $0<\e \le 1$.
It follows by Theorem \ref{theorem-L-2} that
\begin{equation}\label{6-10}
\| u_\e -u_{0, \lambda} \|_{L^2(\Omega_T)}
\le C \e^{k/2}  
\Big\{ \|  u_{0, \lambda} \|_{L^2(0, T; H^2(\Omega))}
+\|\partial_t u_{0, \lambda} \|_{L^2 (\Omega_T)} \Big\}.
\end{equation}

Next, we observe that $u_{0, \lambda} -u_0=0$ on $\partial_p \Omega_T$ and
$$
\partial_t (u_{0, \lambda} -u_0)
-\text{\rm div } \big( \widehat{A_\lambda } \nabla (u_{0, \lambda} -u_0) \big)
=\text{\rm div} \big( (\widehat{A_\lambda} -\widehat{A_\infty})  \nabla u_0 \big)
$$
in $\Omega_T$.
Since $\Omega$ is $C^{1,1}$,
it follows by the standard regularity estimates for parabolic systems with constant coefficients that
$$
\aligned
& \| \partial_t (u_0 -u_{0, \lambda}) \|_{L^2(\Omega_T)}
+ \| u_0- u_{0, \lambda}\|_{L^2(0, T; H^2(\Omega))}\\
&\qquad\qquad\qquad\qquad
 \le C |\widehat{A_\lambda} -\widehat{A_\infty} | \| \nabla^2 u_0\|_{L^2 (\Omega_T)}\\
&\qquad\qquad\qquad\qquad
\le C \lambda^{-1}  \|\partial_s A \|_\infty 
\|\nabla^2 u_0\|_{L^2(\Omega_T)},
\endaligned
$$
where we have used (\ref{2.21}) for the last step.
This, together with (\ref{6-10}), yields the estimate (\ref{L-k-0}).
\end{proof}
\end{thm}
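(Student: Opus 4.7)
The plan is to factor the estimate through an intermediate homogenized problem, exploiting the key identity $\mathcal{L}_\e = \mathcal{L}_{\e,\lambda}$ with $\lambda = \e^{k-2}$. Let $u_{0,\lambda}$ denote the solution of the initial-Dirichlet problem for $\partial_t - \text{\rm div}(\widehat{A_\lambda}\nabla)$ with the same data $(F, f)$ as $u_\e$. Then by the triangle inequality
$$\|u_\e - u_0\|_{L^2(\Omega_T)} \le \|u_\e - u_{0,\lambda}\|_{L^2(\Omega_T)} + \|u_{0,\lambda} - u_0\|_{L^2(\Omega_T)},$$
and I would estimate each term separately, the first by the already-proved self-similar-scale result, the second by a constant-coefficient regularity argument that exploits the quantitative bound on $|\widehat{A_\lambda} - \widehat{A_\infty}|$ from Theorem \ref{theorem-2.1}.

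For the first term, since $\mathcal{L}_\e = \mathcal{L}_{\e,\lambda}$, Theorem \ref{theorem-L-2} applies directly. Its scale parameter is $\delta = (1+\sqrt{\lambda})\e = \e + \e^{k/2}$, which is bounded by $2\e^{k/2}$ for $0 < \e \le 1$ and $0 < k < 2$. This yields
$$\|u_\e - u_{0,\lambda}\|_{L^2(\Omega_T)} \le C\e^{k/2}\bigl\{\|u_{0,\lambda}\|_{L^2(0,T;H^2(\Omega))} + \|\partial_t u_{0,\lambda}\|_{L^2(\Omega_T)}\bigr\}.$$

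For the second term, I would observe that $v := u_{0,\lambda} - u_0$ solves the constant-coefficient parabolic equation
$$\partial_t v - \text{\rm div}\bigl(\widehat{A_\lambda}\nabla v\bigr) = \text{\rm div}\bigl((\widehat{A_\lambda} - \widehat{A_\infty})\nabla u_0\bigr) \quad \text{in } \Omega_T,$$
with zero data on $\partial_p \Omega_T$. Since $\widehat{A_\lambda}$ is uniformly elliptic in $\lambda$ by Lemma \ref{lemma-2.1} and $\Omega$ is $C^{1,1}$, standard $L^2$ regularity for parabolic systems with constant coefficients combined with the rate (\ref{2.21}) from Theorem \ref{theorem-2.1} gives
$$\|v\|_{L^2(0,T;H^2(\Omega))} + \|\partial_t v\|_{L^2(\Omega_T)} \le C|\widehat{A_\lambda} - \widehat{A_\infty}|\,\|\nabla^2 u_0\|_{L^2(\Omega_T)} \le C\e^{2-k}\|\partial_s A\|_\infty \|\nabla^2 u_0\|_{L^2(\Omega_T)},$$
where I used $\lambda^{-1} = \e^{2-k}$. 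A Poincar\'e inequality in time promotes this to an $L^2(\Omega_T)$ bound on $v$ itself of the same order $\e^{2-k}$.

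Combining the two estimates and replacing $u_{0,\lambda}$ norms by $u_0$ norms on the right of the first bound (via $\|u_{0,\lambda}\|_{H^2} \le \|u_0\|_{H^2} + \|v\|_{H^2}$, and similarly for $\partial_t$) introduces a cross term of size $\e^{k/2} \cdot \e^{2-k} = \e^{2-k/2}$, which is no larger than $\e^{k/2}$ for $0 < k < 2$ and $\e \le 1$. The total is thus controlled by $C(\e^{k/2} + \e^{2-k})\{\|u_0\|_{L^2(0,T;H^2(\Omega))} + \|\partial_t u_0\|_{L^2(\Omega_T)}\}$, as claimed. The only real obstacle is verifying that the constants in the constant-coefficient regularity estimate for $v$ do not depend on $\lambda$: this is precisely where the uniform ellipticity $\mu|\xi|^2 \le \xi\cdot \widehat{A_\lambda}\xi$ and uniform bound $|\widehat{A_\lambda}| \le C$ from Lemma \ref{lemma-2.1} are essential, and the hypothesis $\|\partial_s A\|_\infty < \infty$ is exactly what powers Theorem \ref{theorem-2.1} to yield the $\lambda^{-1}$ rate.
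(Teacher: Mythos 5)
Your proposal is correct and follows essentially the same route as the paper: the same decomposition through the intermediate solution $u_{0,\lambda}$ with $\lambda=\e^{k-2}$, the same application of Theorem \ref{theorem-L-2} with $\delta=\e+\e^{k/2}\le 2\e^{k/2}$, and the same constant-coefficient regularity argument for $u_{0,\lambda}-u_0$ powered by the bound (\ref{2.21}) on $|\widehat{A_\lambda}-\widehat{A_\infty}|$. Your extra remarks on the cross term and the $\lambda$-uniformity of the constants are details the paper leaves implicit but are handled correctly.
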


The next theorem treats the case $2<k<\infty$.

\begin{thm}
\label{L-2-3}
Assume  $A$ satisfies  (\ref{ellipticity}) and (\ref{periodicity}).
Also assume that $\|\nabla^2  A \|_\infty \le M$.
Let $2<k<\infty$.
Let $u_\e$ be the  weak solution of the initial-Dirichlet problem (\ref{DP-e}),
where $\Omega$ is a bounded $C^{1, 1}$ domain in $\mathbb{R}^d$ and
$0<T<\infty$.
Let $u_0$ be the solution of the homogenized problem.
Then
\begin{equation}\label{L-k-1}
\| u_\e -u_0\|_{L^2(\Omega_T)}
\le C (\e  +\e^{k-2})
\Big\{ \| u_0\|_{L^2(0, T; H^2(\Omega))}
+\|\partial_t u_0\|_{L^2(\Omega_T)} \Big\}
\end{equation}
for $0<\e< 1$,
where $C$ depends only on $d$, $\mu$, $\Omega$, $T$, and $M$.
\end{thm}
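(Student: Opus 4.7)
The plan is to mirror the strategy of Theorem \ref{L-2-2}, but use the asymptotics of $\widehat{A_\lambda}$ as $\lambda\to 0$ (Theorem \ref{theorem-2.2}) in place of the asymptotics as $\lambda\to\infty$. Set $\lambda =\e^{k-2}$, so that $\mathcal{L}_\e =\mathcal{L}_{\e,\lambda}$. Since $k>2$ and $0<\e<1$, we have $\lambda<1$ and consequently $(1+\sqrt{\lambda})\e\le 2\e$.

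First I would introduce the auxiliary function $u_{0,\lambda}$ solving
$$
\partial_t u_{0,\lambda} -\operatorname{div}\bigl(\widehat{A_\lambda}\nabla u_{0,\lambda}\bigr) =F \quad \text{in } \Omega_T,
\qquad u_{0,\lambda} =f \quad\text{on } \partial_p\Omega_T.
$$
Applying Theorem \ref{theorem-L-2} with $\delta=(1+\sqrt{\lambda})\e\le 2\e$ gives
$$
\| u_\e -u_{0,\lambda}\|_{L^2(\Omega_T)}
\le C\e\,\Big\{ \| u_{0,\lambda}\|_{L^2(0,T;H^2(\Omega))} +\|\partial_t u_{0,\lambda}\|_{L^2(\Omega_T)}\Big\}.
$$
Then I would compare $u_{0,\lambda}$ with $u_0$, observing that their difference vanishes on $\partial_p\Omega_T$ and satisfies
$$
\partial_t (u_{0,\lambda}-u_0) -\operatorname{div}\bigl(\widehat{A_\lambda}\nabla(u_{0,\lambda}-u_0)\bigr)
=\operatorname{div}\bigl((\widehat{A_\lambda}-\widehat{A_0})\nabla u_0\bigr).
$$
Because $\Omega$ is $C^{1,1}$ and $\widehat{A_\lambda}$ is uniformly elliptic with bounds independent of $\lambda$ (Lemma \ref{lemma-2.1}), standard $H^2$ and time regularity estimates for parabolic systems with constant coefficients on $C^{1,1}$ domains yield
$$
\|\partial_t(u_0-u_{0,\lambda})\|_{L^2(\Omega_T)} +\| u_0-u_{0,\lambda}\|_{L^2(0,T;H^2(\Omega))}
\le C\,|\widehat{A_\lambda}-\widehat{A_0}|\,\|\nabla^2 u_0\|_{L^2(\Omega_T)}.
$$
Invoking the quantitative bound (\ref{2.31}) from Theorem \ref{theorem-2.2} under the hypothesis $\|\nabla^2 A\|_\infty\le M$ (and using periodicity together with $\|A\|_\infty\le\mu^{-1}$ to control $\|\nabla A\|_\infty^2$ by $C(M)$ via interpolation), this is bounded by $C\lambda=C\e^{k-2}$ times $\|\nabla^2 u_0\|_{L^2(\Omega_T)}$.

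Combining the two estimates and using the triangle inequality gives the desired rate $\e+\e^{k-2}$. The only step that needs a little care (and is the main obstacle, though a mild one) is ensuring that the constant in the bound on $\|u_{0,\lambda}\|_{L^2(0,T;H^2(\Omega))} +\|\partial_t u_{0,\lambda}\|_{L^2(\Omega_T)}$ that appears in Theorem \ref{theorem-L-2} can in turn be replaced by the same quantity for $u_0$ with a $\lambda$-independent constant; this follows from the triangle inequality together with the comparison estimate between $u_{0,\lambda}$ and $u_0$ just derived (since $\lambda=\e^{k-2}\to 0$ gives a small perturbation of the constant-coefficient operator). The rest of the argument is a straightforward duality/energy computation identical in structure to the proof of Theorem \ref{L-2-2}.
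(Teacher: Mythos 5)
Your proposal is correct and follows essentially the same route as the paper, which proves Theorem \ref{L-2-3} by repeating the argument for Theorem \ref{L-2-2} with $\lambda=\e^{k-2}<1$, replacing the bound on $|\widehat{A_\lambda}-\widehat{A_\infty}|$ by the estimate (\ref{2.31}) for $|\widehat{A_\lambda}-\widehat{A_0}|$, and noting (as you do via interpolation) that $\|\nabla A\|_\infty$ is controlled by a constant depending on $\mu$ and $M$. Your closing remark about transferring the $H^2$ and $\partial_t$ norms from $u_{0,\lambda}$ back to $u_0$ is a legitimate point that the paper leaves implicit, and your treatment of it is correct.
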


\begin{proof}
The proof is similar to that of Theorem \ref{L-2-2}.
The only modification is that in the place of (\ref{2.22}), we use the estimate (\ref{2.31}) to 
bound $|\widehat{A_\lambda} -\widehat{A_0|}$.
Also, note that $\|\nabla A\|_\infty$ may be bounded by  a constant depending on $\mu$ and $M$.
We omit the details.
\end{proof}

\begin{proof}[\bf Proof of Theorem \ref{m-theorem-2}]
Let  $0<\e<1$.
Note that    $\e^{2-k} \le \e^{k/2}$ if $0< k\le 4/3$, and $\e^{2-k}\le \e^{k/2}$ if $4/3<k< 2$.
Also, $\e \le \e^{k-2}$ if $2<k< 3$, and $\e^{k-2}\le \e$ if $k\ge 3$.
Thus, by Theorems \ref{L-2-2} and \ref{L-2-3},
$$
\aligned
 & \| u_\e -u_0\|_{L^2(\Omega_T)}\\
 & 
\le  C \Big\{ \| u_0\|_{L^2(0, T; H^2(\Omega))}
+\|\partial_t u_0\|_{L^2(\Omega_T)} \Big\}
\cdot 
\left\{
\aligned
&  \e^{k/2} 
 &  \quad &  \text{ if } 0<k\le 4/3,\\
&\e^{2-k} 
&  \quad &  \text{ if } 4/3< k<  2,\\
& \e^{k-2} & \quad & \text{ if } 2<k<3,\\
&  \e
& \quad &  \text{ if } k=2 \text{ or } 3\le  k<\infty .\\
\endaligned
\right.
\endaligned
$$
\end{proof}

\begin{remark}
The results on convergence rates in Theorems \ref{L-2-2} and \ref{L-2-3} also hold for initial-Neumann problems.
The proof is almost identical to the case of the  initial-Dirichlet problem.
See \cite{Geng-Shen-2017} for the case $k=2$.
\end{remark}

Using Theorem \ref{theorem-H-1} we may obtain an error estimate in $L^2(0, T; H^1(\Omega))$ for a  two-scale expansion 
for $\partial_t + \mathcal{L}_\e$ in (\ref{operator-0}) 
in terms of its  own correctors.
The case $k=2$ is contained in Theorem \ref{theorem-H-1} with $\lambda=1$.
For $k\neq 2$, we let
\begin{equation}\label{6-30}
v_\e=
\left\{
\aligned
& u_\e -u_0 -\e \chi^\infty (x/\e, t/\e^k) \widetilde{K}_\e (\nabla u_0)& \quad & \text{ if } 0<k<2,\\
& u_\e  -u_0 -\e \chi^0 (x/ \e, t/\e^k) \widetilde{K}_\e (\nabla u_0) & \quad  & \text{ if } 2<k<\infty.
\endaligned
\right.
\end{equation}
In (\ref{6-30}), $\chi^\infty$ and $\chi^0$ are the correctors 
defined by (\ref{2.8}) and (\ref{2.14}), respectively, for $\partial_t + \mathcal{L}_\e$.
Since they satisfy the estimates (\ref{2.10}) and (\ref{2.15}), 
only smoothing in the space variable is needed for the operator $\widetilde{K}_\e$.
More precisely, we let $\widetilde{K}_\e (f)= S^1_\delta  (\eta_\delta f)$,
where 
$$
S^1_\delta (f) (x, t) = \int_{\mathbb{R}^d}  f(x-y, t)  \delta^{-d} \theta_1 (y/\delta)\, dy,
$$ 
$\delta=\e +  \e^{k/2}$,
and the cut-off functions $\eta_\delta$ is the same as in $K_\e$.

\begin{thm}\label{H-1-0}
Suppose that $A$ and $\Omega$  satisfy the same conditions as in Theorem \ref{L-2-2}.
Let $u_\e$ be the weak solution of (\ref{DP-e}) and $u_0$ the homogenized solution.
Let $v_\e$ be given by (\ref{6-30}).
Then 
\begin{equation}\label{6-40}
\aligned
 & \| \nabla v_\e \|_{L^2(\Omega_T)}\\
 & 
\le  C \Big\{ \| u_0\|_{L^2(0, T; H^2(\Omega))}
+\|\partial_t u_0\|_{L^2(\Omega_T)} \Big\}
\cdot 
\left\{
\aligned
&  \e^{k/4} 
 &  \quad &  \text{ if }\  0<k\le 8/5,\\
&\e^{2-k} 
&  \quad &  \text{ if }\  8/5< k<  2,\\
& \e^{k-2} & \quad & \text{ if }\  2<k<5/2,\\
&  \e^{1/2}
& \quad &  \text{ if }  \ 5/2\le  k<\infty .\\
\endaligned
\right.
\endaligned
\end{equation}
\end{thm}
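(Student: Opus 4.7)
The plan is to apply Theorem \ref{theorem-H-1} with $\lambda=\e^{k-2}$, which reduces $\mathcal{L}_{\e,\lambda}$ to $\mathcal{L}_\e$ and makes $\delta=(1+\sqrt{\lambda})\e$ satisfy $\delta\le 2\e^{k/2}$ when $0<k<2$ and $\delta\le 2\e$ when $k>2$, so $\sqrt{\delta}\le C\e^{k/4}$ and $\sqrt{\delta}\le C\e^{1/2}$ in the two regimes respectively. These supply the factors $\e^{k/4}$ and $\e^{1/2}$ in \eqref{6-40} when Theorem \ref{theorem-H-1} is applied to the intermediate two-scale expansion $\widetilde{w}_\e=u_\e-u_{0,\lambda}-\e\chi^\lambda(x/\e,t/\e^2)\,K_\e(\nabla u_{0,\lambda})$ from \eqref{w-6}. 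The remaining task is to estimate
\[
v_\e-\widetilde{w}_\e=(u_{0,\lambda}-u_0)+\e\bigl[\chi^\lambda(x/\e,t/\e^2)-\chi^\ast(x/\e,t/\e^k)\bigr]K_\e(\nabla u_{0,\lambda})+\e\chi^\ast(x/\e,t/\e^k)\bigl[K_\e(\nabla u_{0,\lambda})-\widetilde{K}_\e(\nabla u_0)\bigr]
\]
in $L^2(0,T;H^1(\Omega))$, where $\chi^\ast=\chi^\infty$ for $0<k<2$ and $\chi^\ast=\chi^0$ for $k>2$, and then keep the worse of $\sqrt{\delta}$ and the rates coming from these remainder terms.

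For the first piece, $u_{0,\lambda}-u_0$ solves $(\partial_t+\mathcal{L}_{0,\lambda})(u_{0,\lambda}-u_0)=\text{\rm div}\bigl((\widehat{A_\lambda}-\widehat{A_\ast})\nabla u_0\bigr)$ with zero initial-boundary data, so standard parabolic $H^2$ estimates (using the $C^{1,1}$ regularity of $\Omega$) together with Theorems \ref{theorem-2.1} and \ref{theorem-2.2} give
\[
\|u_{0,\lambda}-u_0\|_{L^2(0,T;H^2(\Omega))}+\|\partial_t(u_{0,\lambda}-u_0)\|_{L^2(\Omega_T)}\le C\,|\widehat{A_\lambda}-\widehat{A_\ast}|\,\|u_0\|_{L^2(0,T;H^2)}\le C\e^{|k-2|}\|u_0\|_{L^2(0,T;H^2)}.
\]
This simultaneously lets us replace the $u_{0,\lambda}$-norms on the right-hand side of Theorem \ref{theorem-H-1} by $u_0$-norms and absorbs the $(u_{0,\lambda}-u_0)$ contribution. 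The corrector-difference term is handled via Lemma \ref{lemma-S} (and the variant $\|g\nabla S_\delta(f)\|_{L^2}\le C\delta^{-1}\sup_{(x,t)}(\fint_{Q_\delta(x,t)}|g|^2)^{1/2}\|f\|_{L^2}$) together with the cylinder estimates \eqref{Q-e-5} and \eqref{Q-e-6}, which yield $\sup_{(x,t)}\bigl(\fint_{Q_\delta(x,t)}|\nabla\chi^\lambda-\nabla\chi^\ast|^2\bigr)^{1/2}\le C\e^{|k-2|}$. Since $\e/\delta\le 1$, both product-rule terms for $\nabla\bigl[\e(\chi^\lambda-\chi^\ast)^\e K_\e(\nabla u_{0,\lambda})\bigr]$ are bounded by $C\e^{|k-2|}\{\|u_0\|_{L^2(0,T;H^2)}+\|\partial_t u_0\|_{L^2}\}$, and the same is true for $\e\chi^\ast K_\e(\nabla u_{0,\lambda}-\nabla u_0)$ by the first-piece bound above.

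The main obstacle is the contribution of $\e\chi^\ast(x/\e,t/\e^k)[K_\e-\widetilde{K}_\e](\nabla u_0)$, which isolates the difference between smoothing in both space and time ($K_\e$) and smoothing only in space ($\widetilde{K}_\e$). Writing $K_\e-\widetilde{K}_\e=S^1_\delta(S^2_\delta-I)(\eta_\delta\,\cdot)$ and expressing $S^2_\delta-I=H_\delta*_t\partial_t$ through the mean-zero primitive $H_\delta$ (with $\|H_\delta\|_{L^1(\mathbb{R})}\le C\delta^2$), I would apply the Lemma \ref{lemma-3.2}-style identity
\[
\partial_t(\eta_\delta\nabla u_0)=(\partial_t\eta_\delta)\nabla u_0+\nabla(\eta_\delta\partial_t u_0)-(\nabla\eta_\delta)\partial_t u_0
\]
so as to avoid requiring $\nabla\partial_t u_0$. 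Lemma \ref{lemma-6.0} controls the boundary-layer pieces supported where $\nabla\eta_\delta$ or $\partial_t\eta_\delta$ are nonzero, and the interior piece is handled by pulling the $x$-derivative across $S^1_\delta$; combined, these yield $\|[K_\e-\widetilde{K}_\e](\nabla u_0)\|_{L^2}\le C\sqrt{\delta}\{\|u_0\|_{L^2(0,T;H^2)}+\|\partial_t u_0\|_{L^2}\}$. Multiplying by $\e\chi^\ast$ and applying the product rule produces no worse rate because $\e/\sqrt{\delta}\le\sqrt{\delta}$ and $\e/\delta\le 1$. Combining all contributions and taking the worse of $\sqrt{\delta}$ and $\e^{|k-2|}$ gives \eqref{6-40}: the two branches meet at $k=8/5$ (where $\e^{k/4}=\e^{2-k}=\e^{2/5}$) and at $k=5/2$ (where $\e^{1/2}=\e^{k-2}$).
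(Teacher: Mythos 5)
Your proposal is correct and follows essentially the same route as the paper: reduce to Theorem \ref{theorem-H-1} with $\lambda=\e^{k-2}$ (giving the $\sqrt{\delta}$ factor $\e^{k/4}$ or $\e^{1/2}$), control $u_{0,\lambda}-u_0$ through $|\widehat{A_\lambda}-\widehat{A_\infty}|$ or $|\widehat{A_\lambda}-\widehat{A_0}|$ via Theorems \ref{theorem-2.1} and \ref{theorem-2.2}, handle the corrector difference with Lemma \ref{lemma-S} and the cylinder estimates (\ref{Q-e-5})--(\ref{Q-e-6}), and pay $\delta^{1/2}$ for replacing the space-time smoothing $K_\e$ by the space-only smoothing $\widetilde{K}_\e$ using the product-rule identity for $\partial_t(\eta_\delta\nabla u_0)$ together with the boundary-layer estimate of Lemma \ref{lemma-6.0}. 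The only cosmetic difference is that the paper realizes the $S^2_\delta-I$ step by factoring $S^1_\delta=S^{11}_\delta\circ S^{11}_\delta$ and using $\|f-S^2_\delta f\|\le C\delta^2\|\partial_t f\|$, which is equivalent to your mean-zero-primitive convolution argument, and the exponent arithmetic and crossover points $k=8/5$, $k=5/2$ match the paper exactly.
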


\begin{proof}
The proof uses Theorem \ref{theorem-H-1} and the estimates of $u_{0, \lambda} -u_0$ in 
the proof of Theorems \ref{L-2-2} and \ref{L-2-3}, where $u_{0, \lambda}$ is the solution of
(\ref{DP-lambda}) with $\lambda=\e^{2-k}$.

Let $\lambda =\e^{k-2}$.
Suppose $0<k<2$.
In view of (\ref{6.1-0}) it suffices to bound 
$$
I=\| \nabla \Big\{
\e \chi^\lambda (x/ \e, t/\e^2) K_\e (\nabla u_{0, \lambda})
-\e \chi^\infty (x/\e, t/\e^k ) 
\widetilde{K}_\e (\nabla u_0)\Big\} \|_{L^2(\Omega_T)}.
$$
Note that
$$
\aligned
I &\le 
\|  \big(\nabla \chi^\lambda (x/\e, t/\e^2) -\nabla\chi^\infty (x/\e, t/\e^k)\big)
K_\e (\nabla u_{0, \lambda})\|_{L^2(\Omega_T)}\\
& \qquad + \|\nabla \chi^\infty (x/\e, t/\e^k) \big( K_\e (\nabla u_{0, \lambda})
-\widetilde{K}_\e (\nabla u_0) \big)\|_{L^2(\Omega_T)}\\
&\qquad  +\e \|\chi^\lambda (x/\e, t/\e^2) \nabla K_\e (\nabla u_{0, \lambda}) \|_{L^2(\Omega_T)}\\
&\qquad +\e \| \chi^\infty (x/\e, t/\e^k) \nabla \widetilde{K}_\e (\nabla u_0)\|_{L^2(\Omega_T)}\\
&=I_1 +I_2 +I_3 +I_4.
\endaligned
$$
To bound $I_1$, we use the inequality (\ref{S-1}).
This gives
\begin{equation}\label{I-1}
\aligned
I_1 &  \le C \sup_{(y, s)\in \mathbb{R}^{d+1}}
\left(\fint_{Q_{\delta}}
|\nabla \chi^\lambda (y/\e, t/\e^2) -\nabla \chi^\infty(x/\e, t/\e^k )|^2 \, dy ds \right)^{1/2}
\| \nabla u_{0, \lambda} \|_{L^2(\Omega_T)}\\
&\le  C \lambda^{-1} \|\partial_s A\|_\infty \| \nabla u_{0, \lambda}\|_{L^2 (\Omega_T)}\\
&\le C \e^{2-k} \|\partial_s A\|_\infty \|\nabla u_0\|_{L^2(\Omega_T)},
\endaligned
\end{equation}
where we have used (\ref{Q-e-5}) for the second inequality.
To estimate $I_2$, we assume that the function $\theta_1$ is chosen so that
$\theta_1 =\theta_{11} * \theta_{11}$, where $\theta_{11} \in C^\infty_0 (B(0, 1))$,
$\theta_{11} \ge 0$ and $\int_{\mathbb{R}^d} \theta_{11} =1$.
This allows us to write $S_\delta^1 = S_\delta^{11} \circ S_\delta^{11}$,
where $S_\delta^{11} (f)=f * (\theta_{11})_\delta$.
As a result, we obtain 
$$
\aligned
I_2  &\le C \| S_\delta^2\big[ S_\delta^{11}(\eta_\delta \nabla u_0)\big] -S^{11}_\delta (\eta_\delta \nabla u_0)\|_{L^2(\Omega_T)}
\\
&\le C\delta^2 \| \partial_t S_\delta^{11} (\eta_\delta \nabla u_0)\|_{L^2(\Omega_T)}\\
&= C \delta^2 
\ \| S_\delta^{11}
 \big\{  (\partial_t  \eta_\delta) (\nabla u_0)
+\nabla (\eta_\delta \partial_t u_0)
-(\nabla \eta_\delta) \partial_t u_0 \big\} \|_{L^2(\Omega_T)}\\
&\le C \delta^{1/2}
\Big\{ \|\nabla u_0\|_{L^2(\Omega_T)}
+\|\nabla^2 u_0\|_{L^2(\Omega_T)}
+ \|\partial_t u_0\|_{L^2(\Omega_T)} \Big\}.
\endaligned
$$
It is not hard to see that
$$
\aligned
I_3 +I_4
 & \le C \e \Big\{
\|\nabla (\eta_\delta \nabla u_{0, \lambda}) \|_{L^2(\Omega_T)}
+ \|\nabla (\eta_\delta \nabla u_{0}) \|_{L^2(\Omega_T)}\Big\}\\
& \le C \delta^{1/2}
\Big\{ \|\nabla u_0\|_{L^2(\Omega_T)}
+\|\nabla^2 u_0\|_{L^2\Omega_T)} \Big\}.
\endaligned
$$
In summary we have proved that
\begin{equation}\label{H-20}
\|\nabla v_\e\|_{L^2(\Omega_T)}
\le C \big\{
 \e^{k/4} +\e^{2-k} \big\}
\big\{ \|u_0\|_{L^2(0, T; H^1(\Omega))}
+\|\partial_t u_0\|_{L^2(\Omega_T)} \big\}
\end{equation}
for $0<k<2$.
A similar argument gives
\begin{equation}\label{H-21}
\|\nabla v_\e\|_{L^2(\Omega_T)}
\le C \big\{
 \e^{1/2} +\e^{k-2} \big\}
\big\{ \|u_0\|_{L^2(0, T; H^1(\Omega))}
+\|\partial_t u_0\|_{L^2(\Omega_T)} \big\}
\end{equation}
for $2<k<\infty$.
The error estimate (\ref{6-40}) follows readily from (\ref{H-20}) and (\ref{H-21}).
\end{proof}


 \bibliographystyle{amsplain}
 
\bibliography{Geng-Shen-2019.bbl}

\bigskip

\begin{flushleft}
Jun Geng,
School of Mathematics and Statistics,
Lanzhou University,
Lanzhou, P.R. China.

E-mail:gengjun@lzu.edu.cn
\end{flushleft}

\begin{flushleft}
Zhongwei Shen,
Department of Mathematics,
University of Kentucky,
Lexington, Kentucky 40506,
USA.

E-mail: zshen2@uky.edu
\end{flushleft}

\bigskip

\medskip

\end{document}